\documentclass[onefignum,onetabnum]{siamonline220329}

\usepackage{bm}
\usepackage{booktabs}
\usepackage{bbold}
\usepackage{mathtools}
\usepackage{amssymb}
\usepackage{stmaryrd}
\usepackage{cleveref}
\usepackage{graphicx}
\usepackage{relsize}
\usepackage{subcaption}
\usepackage{algorithm}
\usepackage{algpseudocode}
\usepackage{nicefrac}
\usepackage{multirow}

\usepackage[draft, todonotes={textsize=tiny}]{changes}
\setuptodonotes{color=red, backgroundcolor=red!60!white,
  bordercolor=red, tickmarkheight=10pt}
  
\definechangesauthor[name=Robert, color=cyan]{RB}

\definechangesauthor[name=Florin, color=magenta]{FB}

\crefformat{equation}{(#2#1#3)}
\Crefformat{equation}{#2Equation#3~(#2#1#3)}
\crefrangeformat{equation}{(#3#1#4)--(#5#2#6)}
\Crefrangeformat{equation}{#3Equations#4~(#3#1#4)--(#5#2#6)}
\crefmultiformat{equation}{(#2#1#3)}{ and~(#2#1#3)}{, (#2#1#3)}{, and~(#2#1#3)}
\Crefmultiformat{equation}{#2Equations#3~(#2#1#3)}{ and~(#2#1#3)}{, (#2#1#3)}{, and~(#2#1#3)}
\crefrangemultiformat{equation}{(#3#1#4)--(#5#2#6)}{ and~(#3#1#4)--(#5#2#6)}{, (#3#1#4)--(#5#2#6)}{, and~(#3#1#4)--(#5#2#6)}
\Crefrangemultiformat{equation}{#3Equations#4~(#3#1#4)--(#5#2#6)}{ and~(#3#1#4)--(#5#2#6)}{, (#3#1#4)--(#5#2#6)}{, and~(#3#1#4)--(#5#2#6)}

\crefname{section}{Section}{Sections}
\crefname{appendix}{Appendix}{Appendices}
\crefname{theorem}{Theorem}{Theorems}
\crefname{proposition}{Proposition}{Propositions}
\crefname{corollary}{Corollary}{Corollaries}
\crefname{lemma}{Lemma}{Lemmata}
\crefname{remark}{Remark}{Remarks}
\crefname{algorithm}{Algorithm}{Algorithms}
\crefname{figure}{Figure}{Figures}
\crefname{table}{Table}{Tables}

\allowdisplaybreaks

\DeclareMathOperator{\diag}{diag}

\DeclareMathOperator*{\argmax}{argmax}

\DeclareMathOperator*{\supp}{supp}

\DeclareMathOperator{\id}{Id}

\DeclareMathOperator{\GW}{GW}

\DeclareMathOperator{\GWB}{GWB}
\DeclareMathOperator{\GWTB}{GWTB}
\DeclareMathOperator{\TB}{TB}
\newcommand{\tildeTB}{\widetilde\TB}

\DeclareMathOperator{\MGW}{MGW}

\DeclareMathOperator{\LGW}{LGW}

\DeclareMathOperator{\KL}{KL}

\DeclareMathOperator{\F}{\mathcal{F}}

\newcommand{\Fgwb}{\F_{\mathrm{GWB}_\rho}}

\newcommand{\Fgw}{F_{\mathrm{GW}}}
\newcommand{\Fmgw}{F_{\mathrm{MGW}_\rho}}

\newcommand{\Pio}{\Pi_{\mathrm{o}}}
\newcommand{\Beta}{\mathrm{B}}
\newcommand{\II}{\mathbb{I}}
\newcommand{\JJ}{\mathbb{J}}
\newcommand{\XX}{\mathbb{X}}
\newcommand{\YY}{\mathbb{Y}}
\newcommand{\ZZ}{\mathbb{Z}}

\newcommand{\rest}{\vert}

\newcommand{\T}{\mathfrak{T}}

\newcommand{\Exp}{\mathbb{E}\mathrm{xp}}
\newcommand{\Log}{\mathbb{L}\mathrm{og}}
\newcommand{\Melt}{\mathrm{M}}
\newcommand{\Melto}{\Melt^\mathrm{o}}
\newcommand{\Glue}{\Gamma}
\newcommand{\YYtimes}{\YY_{\!\!\times}}

\newcommand{\weakly}{\rightharpoonup}
\newcommand{\N}{\mathcal{N}}
\newcommand{\NN}{\ensuremath{\mathbb{N}}}
\newcommand{\M}{\mathcal{M}}

\newcommand{\R}{\ensuremath{\mathbb{R}}}

\newcommand{\Xf}{\mathfrak{X}}
\newcommand{\Yf}{\mathfrak{Y}}
\newcommand{\Zf}{\mathfrak{Z}}
\newcommand{\ff}{\mathfrak{f}}
\newcommand{\gf}{\mathfrak{g}}

\newcommand{\mf}{\mathfrak{m}}

\newcommand{\dx}{\,\mathrm{d}}
\newcommand{\tT}{\mathrm{T}}
\newcommand{\p}{\mathcal{P}}

\newcommand{\sym}{\mathrm{sym}}
\DeclareMathOperator{\borel}{\mathcal{B}}
\DeclareMathOperator{\pmeas}{\mathcal{P}}
\DeclareMathOperator{\gauges}{\mathfrak{G}}
\newcommand{\GM}{\mathfrak{G\!M}}
\newcommand{\para}{\mathfrak{P}}

\newcommand{\eps}{\varepsilon}

\newlength{\labwidth}

\numberwithin{equation}{section}

\hyphenation{rep-re-sen-ta-tives}

\headers{Tangential Fixpoint Iterations for
Gromov--Wasserstein Barycenters}
{F. Beier and R. Beinert}

\title{Tangential Fixpoint Iterations for
Gromov--Wasserstein Barycenters}

\author{Florian Beier%
  \thanks{Institute of Mathematics,
    Technische Universit\"at Berlin,
    Stra\ss{}e des 17. Juni 136,
    10623 Berlin, Germany 
    (\email{f.beier@tu-berlin.de},
    \email{robert.beinert@tu-berlin.de}).}
  \and Robert Beinert\footnotemark[1]}

\begin{document}

\maketitle

\begin{abstract}
The Gromov--Wasserstein (GW) transport problem
is a relaxation of classic optimal transport,
which seeks a transport between two measures
while preserving their internal geometry.
Due to meeting this theoretical underpinning,
it is a valuable tool 
for the analysis of objects that
do not possess a natural
embedding or should be studied 
independently of it.
Prime applications can thus be found in e.g.\
shape matching, classification and interpolation tasks.
To tackle the latter, 
one theoretically justified approach
is the employment of
multi-marginal GW transport
and GW barycenters,
which are Fréchet means with respect to
the GW distance.
However, because the computation of GW
itself already poses 
a quadratic and non-convex
optimization problem,
the determination of GW barycenters
is a hard task and 
algorithms for their computation are scarce.
In this paper, 
we revisit a known procedure
for the determination of Fréchet means
in Riemannian manifolds 
via tangential approximations
in the context of GW.
We provide a characterization of
barycenters in the GW tangent space,
which ultimately gives rise to 
a fixpoint iteration for 
approximating GW barycenters
using multi-marginal plans.
We propose a relaxation 
of this fixpoint iteration
and show that it monotonously
decreases the barycenter loss.
In certain cases our proposed method
naturally provides us with barycentric embeddings.
The resulting algorithm is capable 
of producing qualitative shape interpolations
between multiple 3d shapes 
with support sizes of over thousands of points
in reasonable time.
In addition, we verify our method on
shape classification 
and multi-graph matching tasks.
\end{abstract}

\section{Introduction}

Optimal transport (OT) focuses on
transporting one given measure to another
while minimizing some given cost.
The interest of OT is twofold.
Firstly, a solution to the problem, 
i.e.\ an optimal transport plan
gives a means of correspondence between the input measures.
Secondly, 
the value of the functional at a minimizer
also gauges their divergence 
with respect to the underlying cost function.
For a broad overview on the subject
we refer to \cite{PC19book}.
As its formulation is very general,
OT has applications in a wide range 
of topics such as
image matching \cite{wang2013linear,ZYHT07},
signal processing \cite{elvander2020multi}
and
particle dynamics \cite{KLNS20,junge2022entropic}.
Moreover,
due to relaxations and associated
efficient approximate solvers,
OT is now an established tool
in machine learning
\cite{ACB17,FZM15,KSKW15}.
The most prominent relaxation
is entropy-regularized OT
which can be solved by the
parallelizable Sinkhorn algorithm
\cite{C2013}.
Other efficient approximate solvers rely on
e.g.\
slicing strategies 
\cite{texturemix11,
quellmalz2023sliced,
quellmalz2024parallelly},
restriction to Gaussian mixture models 
\cite{delon2020wasserstein}
and
low-rank restrictions \cite{forrow2019statistical,scetbon2021low}.
A generalization of OT 
which concerns itself 
with transporting between more than two
input measures is multi-marginal OT \cite{CGEI2010}.
The resulting formulation can be used
to characterize OT barycenters
which are Fréchet means with respect to 
the OT divergence \cite{AC11barycenters}.
Independently of barycenters,
multi-marginal OT also finds applications 
in e.g.\
matching for teams \cite{CGEI2010},
particle tracking \cite{BLNS2021},
density functional theory \cite{Pass15}.
The metric version of OT, 
namely the Wasserstein distance,
has been a major focus 
in- and outside of the OT community.
Here measures are considered to be 
members of the same underlying metric space
and the associated cost function in
the transport problem is 
a power of a metric.
The Wasserstein space
is then obtained by
equipping the space of probability measures
with finite moments
with the Wasserstein distance.
The Wasserstein space
is a metric space and exhibits a
rich Riemannian structure 
which has been extensively studied in \cite{santambrogio2017euclidean,Ambrosio}
and gives rise to the very active study of
Wasserstein gradient flows \cite{hertrich2024wasserstein,
altekruger2023neural,
neumayer2024wasserstein,hagemann2024posterior,hertrich2023wasserstein,Ambrosio,
JKO1998,Ot01,Pav2014,
ArKoSaGr19,
EGNS2021}.

A shortcoming of the Wasserstein distance
is that it is heavily
dependent on the embedding 
of the given input measures.
One line of work,
namely Gromov--Wasserstein (GW) transport,
which has been sparked
by Sturm \cite{sturm2006geometry} focuses
on relaxations of OT to obtain an
embedding-free comparison 
and matching of 
so-called metric measure (mm-) spaces.
In addition to a measure,
an mm-space possesses
additional internal geometrical
structure in the form of a metric.
Sturms proposed transport problem 
is a twofold minimization over all 
measure-preserving isometric embeddings
of either input 
into a common ambient metric space 
and over associated 
Wasserstein transport plans.
The proposed transport problem 
has desirable properties such as
independence of 
isometric transformations 
of either inputs.
Although theoretically appealing,
solving the proposed problem in practice
is intractable.
With the same idea in mind, 
Mémoli \cite{memoli2011gromov}
proposed an alternative 
transport problem between mm-spaces
which is now widely known as the GW distance.
Although non-convex and quadratic, 
the problem is more accessible
from a numerical standpoint.
Because of this improved accessibility 
and since the GW distance 
meets the same theoretical
underpinnings as Sturms construction,
it is a valuable tool 
for matching and comparison tasks
where inputs do not possess a natural
embedding or should be studied 
independently of it.
The proposed distance and transport problem
thus has natural applications 
in e.g.\ shape matching \cite{memoli2011gromov},
graph analysis \cite{nguyen2022linearfused},
word alignment \cite{alvarez2018gromov}
and
particle dynamics \cite{beier2023gromov}.
Solving GW still poses 
a non-convex, 
quadratic optimization problem,
which motivated the OT community 
to propose several relaxations,
most of which are inspired from the OT case.
Examples are
entropy-regularized GW \cite{PCS2016,xu2019scalable},
low-rank constrained GW \cite{scetbon2022linear}
and sliced GW \cite{sliced_gw,BHS21}.
A fast exact solver for GW for inputs in
low-dimensional Euclidean space 
has been presented in \cite{ryner2023_globally}.
Recently,
multi-marginal GW (MGW) has been proposed 
which exhibits similar connections to GW barycenters, i.e. Fréchet means
with respect to the GW distance,
as in the Wasserstein setting
\cite{BBS2022multi}.
In \cite{sturm2020space}, 
Sturm showed that using the GW distance,
a metric space can be constructed which
exhibits a similar Riemannian structure
as the Wasserstein space.
This additional structure can be leveraged 
e.g.\ for achieving a computational speedup
when tasks require all 
pairwise distances of a large dataset.
This potential has been explored 
in the Wasserstein case
\cite{wang2013linear,
moosmuller2021linear}
as well as in the GW case 
\cite{beier2022linear}.
A fruitful iterative method for the
approximation of Fréchet means or barycenters
on Riemannian manifolds is as follows:
lift the inputs into the tangent space
at a reference point,
determine 
the associated tangent barycenter 
and update the reference point by
projecting back to the manifold
\cite{pennec}.
This Fréchet mean method 
has been successfully used
to obtain a simple fixpoint iteration
to approximate Wasserstein barycenters
and multi-marginal optimal transport
\cite{ABC16fixedpoint,lindheim2023simple}.
To the best of our knowledge,
the only work which follows a similar idea
in the GW case is \cite{chowdhury2020gromov}.
However,
in the reference,
the authors assume 
the given spaces to be finite
and then consider
a gradient descent 
of the Fréchet functional.
The nature of the provided proofs
build strongly on the assumed discreteness
and do not translate to the general case.
Moreover,
a characterization of Fréchet means
in the GW tangent space is not given.

In this work, 
we are motivated by approximating
GW barycenters via the previously mentioned
Fréchet mean method.
We study the tangent space and provide a characterization of tangential barycenters.
The latter gives rise to a theoretically 
justified fixpoint iteration 
for GW barycenter computation.
We propose a relaxation of it 
which has desirable properties.
Firstly, 
it can be easily implemented 
and only requires a solver for 
the GW transport problem.
Secondly, 
we show that it monotonously
decreases the GW barycenter functional.
Furthermore,
a single iteration of our proposed method
is often sufficient to obtain a barycenter,
e.g.\ when merely considering two inputs
or any number of Gaussian spaces 
endowed with the standard scalar product.
Existing state-of-the-art algorithms 
often have the undesirable property
that computed approximate barycenters 
do not come with an embedding.
Depending on the task,
this may require numerically expensive 
embedding techniques.
We discuss special cases, 
where our method naturally provides
us with an embedding of the barycenter.
We show an auxillary result 
which states that there exists a
GW barycenter 
between Gaussian spaces 
endowed with the standard scalar product 
which is again a Gaussian space.
We run numerical experiments,
showing that our method is capable 
of producing qualitative shape interpolations
between multiple 3d shapes 
with support sizes of over 5000 points.
To the best of our knowledge,
no existing GW-based method 
is able to achieve this.
Our experiments indicate that, 
in practice, 
running our method once
produces 
the entire interpolation
between the given inputs.
In addition,
we show that our method can be used to
classify 3d shapes and to obtain
multi-graph matchings.

\paragraph{Main Contributions}
\begin{itemize}
    \item
    Justification
    of the Fréchet mean method for the GW case
    in \cref{thm:tan_min}.
    More precisely,
    we show that
    the required barycenters
    in the GW tangent space always exist
    and can be characterized 
    by multi-marginal plans.
    \item 
    Relaxation of the Fréchet mean method
    in the form of a simple-to-implement
    fixpoint iteration.
    We show that this iteration decreases
    the barycenter loss monotonously,
    see \cref{thm:G_monotone}.
    Furthermore, \cref{thm:G_fixpoint} shows that every subsequence of this fixpoint iteration contains a converging subsequence 
    whose limit is an actual fixpoint.
    Notice that every GW barycenter is a fixpoint
    of the relaxed Fréchet mean procedure.
    \item 
    For Gaussian distributions 
    endowed with the Euclidean scalar product,
    we analytically 
    characterize a GW barycenter as
    a Gaussian distribution 
    with the same Euclidean scalar product,
    see \cref{thm:gaussian}.
\end{itemize}

Our paper is organized as follows.
\cref{sec:2}
provides the reader with 
the fundamental definitions
and preliminary results
related to GW transport.
In \cref{sec:3},
we define the multi-marginal formulation
of the GW problem.
Following \cite{sturm2020space},
\cref{sec:4}
discusses the Riemmannian structure 
and gives the definition of the tangent space.
In \cref{sec:5}
we introduce the GW barycenter problem
and show its one-to-one correspondence
to multi-marginal GW transport plans.
We proceed with the definition of the
tangential GW barycenter problem
and characterize its solutions.
In \cref{sec:6},
we propose a relaxed fixpoint iteration
to approximate GW barycenters 
by a sequence
of projected tangential barycenters.
We show that the sequence 
monotonously decreases the GW barycenter loss.
\cref{sec:7} shows how the fixpoint iteration 
can be algorithmically implemented
and discusses related practicalities.
\cref{sec:8} provides three numerical experiments 
indicating the potential of our proposed method.

\section{Gauged Measure Spaces and Gromov--Wasserstein}\label{sec:2}
For any Polish space $X$,
we denote the related Borel $\sigma$-algebra by $\borel (X)$,
the set of signed Borel measures by $\M(X)$,
the set of positive Borel measures by $\M^+(X)$,
and the set of Borel probability measures by $\pmeas(X)$.
If $\Phi \colon X \to Y$ is Borel measurable 
between the Polish spaces $X$ and $Y$,
the \emph{push forward} 
of $\xi \in \pmeas(X)$ by $\Phi$ 
is defined via
\begin{equation*}
    (\Phi_\# \xi) (A)
    \coloneqq 
    (\xi \circ \Phi^{-1})(A),
    \quad A \in \mathcal B(Y).
\end{equation*}
Depending on $\xi \in \pmeas(X)$,
any symmetric and square-integrable function 
$g \colon X \times X \to \R$
with respect to the product measure $\xi \otimes \xi$
is called a \emph{gauge}.
The \emph{set of all gauges}
on $X$ with respect to $\xi$
is denoted by 
$\gauges(X,\xi) \coloneqq L_\sym^2(X \times X, \xi \otimes \xi)$.
For every map $\Psi \colon Y \to X$, 
the \emph{pull back} 
of $g$ through $\Psi$ 
is defined as
\begin{equation*}
    (\Psi^\# g) (y,y') 
    \coloneqq
    g(\Psi(y), \Psi(y')),
    \quad y, y' \in Y.
\end{equation*}
Any triple $\XX \coloneqq (X,g,\xi)$ 
consisting of
\begin{itemize}
    \item a Polish space $X$,
    \item a measure $\xi \in \pmeas(X)$,
    \item a gauge $g \in \gauges(X,\xi)$
\end{itemize} 
is called a \emph{gauged measure space (gm-space)}.
Figuratively,
the measure describes
how the space is weighted 
whereas
the gauge provides geometrical information.
An important case of gm-spaces are obtained by
choosing a metric as gauge.
These spaces are also called metric measure spaces (mm-spaces).
Popular other choices are 
(powers of) metrics,
inner products, 
and adjacency matrices
in the realm of graph matching.

The Gromov--Wasserstein (GW) distance 
between two gm-spaces 
$\XX \coloneqq (X, g, \xi)$
and
$\YY \coloneqq (Y, h, \upsilon)$
is an optimal-transport-based pseudometric.
Henceforth,
we denote the \emph{set of transport plans} 
between $\XX$ and $\YY$ by
$\Pi(\XX,\YY)
\coloneqq 
\{ \pi \in \pmeas(X \times Y) :
(P_X)_\# \pi = \xi,
(P_Y)_\# \pi = \upsilon \}$,
where $P_\bullet$ signifies the projection 
to the indicated component.
The \emph{GW-2} or just \emph{GW distance} is given as
\begin{equation}
    \label{eq:GW}
    \GW_2(\XX,\YY) 
    \coloneqq
    \inf_{\pi \in \Pi(\XX,\YY)}
    \Fgw^{\XX,\YY} (\pi)
\end{equation}
with the GW functional
\begin{align}
    \Fgw^{\XX,\YY} (\pi)
    &\coloneqq
    \|g(\cdot_1,\cdot_3) - h(\cdot_2,\cdot_4)\|
    _{L^2_\sym(\pi(\cdot_1,\cdot_2) \otimes \pi(\cdot_3,\cdot_4))}
    \notag\\
    \label{eq:gw-fun}
    &\coloneqq
    \Bigl(
    \iint_{(X \times Y)^2} 
    \rvert
    g(x,x') - h(y,y')
    \rvert^2 
    \dx \pi(x,y) \dx \pi(x',y') 
    \Bigr)^{\frac{1}{2}}.
\end{align}
The infimum in \cref{eq:GW} is always attained \cite[Prop~5.4]{sturm2020space},
and
we denote the \emph{set of (GW-) optimal transport plans} 
by $\Pio(\XX,\YY)$.
Two gm-spaces $\XX$ and $\YY$ are called \emph{homomorphic}
($\XX \simeq \YY$)
if $\GW(\XX,\YY) = 0$.
In this case,
there exist a third gm-space 
$\ZZ \coloneqq (Z,f,\zeta)$
as well as
Borel-measurable maps 
$\Phi \colon Z \to X$
and
$\Psi \colon Z \to Y$
such that
$\xi = \Phi_\# \zeta$,
$\upsilon = \Psi_\# \zeta$,
and
$f = \Phi^\# g = \Psi^\# h$,
see \cite[Prop~5.6]{sturm2020space}.
In particular,
$\XX \simeq \YY$
if there exists a map $\Phi \colon X \to Y$
such that
$\upsilon = \Phi_\# \xi$ and $g = \Phi^\# h$.
The homomorphic equivalence class of $\XX$ is denoted by
$\Xf \coloneqq \llbracket \XX \rrbracket$,
and the \emph{space of homomorphic gm-spaces} is expressed as $\GM$.
The GW distance defines a metric on $\GM$.
Moreover,
$(\GM,\GW)$ is a complete, geodesic metric space,
see \cite[Thm~5.8]{sturm2020space}.

Every gm-space $\XX \coloneqq (X, g, \xi)$ is a Lebesgue--Rokhlin space,
which means that $\xi \in \pmeas(X)$ can be characterized
via the Lebesgue measure $\lambda$ on the unit interval $[0,1]$,
cf.\ \cite[Lem~1.15]{sturm2020space}.
More precisely,
there exists a Borel-measurable map $\Phi\colon [0,1] \to X$
so that $\Phi_\# \lambda = \xi$.
The map $\Phi$ may be chosen with respect to the atomic decomposition
with finite or infinite many atoms:
\begin{equation*}
    \label{eq:atom-deco}
    \xi = \sum_{n=1}^\infty \xi_n \delta_{x_n} + \tilde{\xi},
\end{equation*}
where $\xi_n \in [0,1]$ is the weight of the Dirac measure at $x_n \in X$,
and $\tilde{\xi} \in \M^+(X)$ the remaining diffuse part. 
On the basis of the partition 
\begin{equation}
    \label{eq:para}
    I_n \coloneqq [M_{n-1}, M_n)
    \quad\text{and}\quad
    I_\infty \coloneqq [M_\infty, 1]
    \quad\text{with}\quad
    M_n \coloneqq \sum_{m=1}^n \xi_m, 
    \quad n \in \NN \cup \{\infty\},
\end{equation}
the \emph{parametrization} $\Phi$ of $\XX$ 
can be chosen such that
$\Phi(I_n) \equiv x_n$
and that
the restriction $\Phi |_{I_\infty} \colon I_\infty \to \supp (\tilde\xi)$
is bijective with Borel-measurable inverse.
The \emph{set of parametrizations} of $\XX$ is expressed as $\para(\XX)$
and is independent of the actual gauge.
For any $\Phi \in \para(\XX)$,
$\XX$ is obviously homomorphic to
$\II \coloneqq ([0,1],\bar{g},\lambda)$ with $\bar{g} \coloneqq \Phi^\# g$,
which essentially allows us to restrict ourselves to gm-spaces over $[0,1]$.
The next result shows that the parametrization can be carried over 
to (optimal) transport plans between arbitrary gm-spaces.

\begin{lemma}
    \label{lem:bi-par}
    Let the gm-spaces 
    $\XX \coloneqq (X,g,\xi)$ and $\YY \coloneqq (Y,h,\upsilon)$ 
    be homomorphic to
    $\II = ([0,1],\bar{g},\lambda)$ and $\JJ = ([0,1],\bar{h},\lambda)$ 
    via the parametrizations 
    $\Phi \in \para(\XX)$ and $\Psi \in \para (\YY)$ respectively.
    Then
    \begin{equation*}
    \Pi(\XX,\YY) = (\Phi \times \Psi)_\# \Pi(\II,\JJ)
    \quad \text{and} \quad
    \Pio(\XX,\YY) = (\Phi \times \Psi)_\# \Pio(\II,\JJ).
    \end{equation*}
\end{lemma}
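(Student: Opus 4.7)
The plan is to first establish the set equality $\Pi(\XX,\YY) = (\Phi\times\Psi)_\#\Pi(\II,\JJ)$, and then upgrade it to the analogous identity for optimal plans using the invariance of the GW functional under parametrizations. The inclusion $(\Phi\times\Psi)_\#\Pi(\II,\JJ) \subseteq \Pi(\XX,\YY)$ is immediate from the defining properties of a parametrization: for $\bar\pi\in\Pi(\II,\JJ)$, the marginals of $(\Phi\times\Psi)_\#\bar\pi$ are $\Phi_\#\lambda=\xi$ and $\Psi_\#\lambda=\upsilon$ as required.

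For the reverse inclusion, the key device will be a measure-theoretic \emph{stochastic inverse} of the parametrization in the form of a Markov kernel $\phi\colon X\to\pmeas([0,1])$. Using the partition from~\cref{eq:para}, I set $\phi(x_n)\coloneqq \xi_n^{-1}\,\lambda|_{I_n}$ on each atom $x_n$ (uniformly spreading the atom across its fibre) and $\phi(x)\coloneqq \delta_{(\Phi|_{I_\infty})^{-1}(x)}$ on the diffuse support, exploiting the Borel-measurable bijectivity of $\Phi|_{I_\infty}$ onto $\supp(\tilde\xi)$ guaranteed by the definition of a parametrization. A case-by-case check on the atomic and diffuse pieces then yields the disintegration identities
\[\int_X\phi(x)(A)\,\mathrm{d}\xi(x) = \lambda(A) \quad\text{for all }A\in\borel([0,1]),\quad\text{and}\quad\supp\phi(x)\subseteq\Phi^{-1}(\{x\})\text{ for }\xi\text{-a.e.\ }x.\]
Defining $\psi$ analogously from $\Psi$, I then assemble, for arbitrary $\pi\in\Pi(\XX,\YY)$, the candidate measure
\[\bar\pi(A\times B)\coloneqq \int_{X\times Y}\phi(x)(A)\,\psi(y)(B)\,\mathrm{d}\pi(x,y),\]
which lies in $\Pi(\II,\JJ)$ by the first identity above, and satisfies $(\Phi\times\Psi)_\#\bar\pi=\pi$ by the second via a Fubini-type argument.

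Finally, the identity for optimal plans follows because $\Fgw^{\XX,\YY}((\Phi\times\Psi)_\#\bar\pi) = \Fgw^{\II,\JJ}(\bar\pi)$ for every $\bar\pi\in\Pi(\II,\JJ)$, which is a direct consequence of $\bar g = \Phi^\# g$ and $\bar h = \Psi^\# h$, combined with the fact that $\GW_2(\XX,\YY) = \GW_2(\II,\JJ)$ since the GW distance is constant on homomorphism classes. The main obstacle in the whole argument is the construction of the kernel $\phi$ and the verification of the disintegration identity on the combined atomic/diffuse decomposition of $\xi$; once this is in place, every remaining step reduces to a routine application of Fubini's theorem.
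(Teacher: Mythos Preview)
Your proposal is correct and is essentially the same construction as the paper's: both lift $\pi$ by spreading each atom $x_n$ uniformly over its fibre $I_n$ and by using the Borel inverse of $\Phi|_{I_\infty}$ on the diffuse part, and both deduce the statement for optimal plans from $\Fgw^{\XX,\YY}((\Phi\times\Psi)_\#\bar\pi)=\Fgw^{\II,\JJ}(\bar\pi)$ together with $\GW(\XX,\YY)=\GW(\II,\JJ)$. The only difference is packaging---the paper writes out the four cases (atom/diffuse on the $\XX$ side times atom/diffuse on the $\YY$ side) of $\pi$ explicitly and defines $\bar\pi$ piecewise on the rectangles $I_n\times J_m$, $I_n\times J_\infty$, $I_\infty\times J_m$, $I_\infty\times J_\infty$, whereas you encode the same lifting once as a Markov kernel $\phi$ (and $\psi$) and obtain $\bar\pi$ as a single integral, which is slightly cleaner and avoids the case split at the minor cost of checking kernel measurability.
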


\begin{proof}
    On the basis of the following atomic decompositions
    $\xi = \sum_{n=1}^\infty \xi_n \delta_{x_n} + \tilde \xi$
    and 
    $\upsilon = \sum_{m=1}^\infty \upsilon_m \delta_{y_m} + \tilde \upsilon$,
    any $\pi \in \Pi(\XX,\YY)$ admits the form
    \begin{equation}
        \label{eq:atom-plan}
        \pi
        =
        \sum_{n,m = 1}^\infty \pi_{n,m} \delta_{(x_n,y_m)} 
        + \tilde \pi
        \quad\text{with}\quad
        \tilde \pi
        =
        \sum_{n=1}^\infty \tilde \pi_{n,\infty}
        + \sum_{m=1}^\infty \tilde \pi_{\infty,m}
        + \tilde \pi_{\infty,\infty},
    \end{equation}
    where the weight $\pi_{n,m}$ describes the partial transport 
    from $\delta_{x_n}$ to $\delta_{y_m}$,
    the measure $\tilde \pi_{n,\infty}$ the partial transport
    from $\delta_{x_n}$ to $\tilde \upsilon$,
    the measure $\tilde \pi_{\infty,m}$ the partial transport
    from $\tilde \xi$ to $\delta_{y_m}$,
    and the measure $\tilde\pi_{\infty,\infty}$ the partial transport 
    from $\tilde \xi$ to $\tilde\upsilon$.
    For the non-atomic parts $\tilde\pi_{\bullet,\bullet}$,
    the total masses are expressed by 
    $\pi_{\bullet,\bullet} \coloneqq \tilde\pi_{\bullet,\bullet}(X \times Y)$.    
    Without loss of generality,
    let $\Phi \in \para(\XX)$ and $\Psi\in\para(\YY)$ be chosen
    with respect to the atomic decomposition of $\xi$ and $\upsilon$
    as described in \cref{eq:para}.
    Recall that
    $\Phi$ and $\Psi$ are bijective
    with Borel-measurable inverse
    on $\supp(\tilde\xi)$ and $\supp(\tilde\upsilon)$.
    To distinguish the spaces $[0,1]$ of $\II$ and $\JJ$,
    we write $\II = (I, \bar g, \lambda)$
    with partition $(I_n)_{n\in\NN} \cup (I_\infty)$
    and $\JJ = (J, \bar h, \lambda)$
    with $(J_m)_{m\in\NN} \cup (J_\infty)$.
    For any $\pi \in \Pi(\XX,\YY)$,
    we now define $\bar{\pi} \in \pmeas(I \times J)$ via
    \begin{align*}
        \bar{\pi}\rest_{I_n \times J_m} 
        &\coloneqq 
        \tfrac{\pi_{n,m}}{\xi_n \upsilon_m} (\lambda\rest_{I_n} \otimes \lambda\rest_{J_m}),
        &
        \bar{\pi}\rest_{I_n \times J_\infty} 
        &\coloneqq 
        \tfrac{1}{\xi_n} 
        (\lambda\rest_{I_n} \otimes \Psi^{-1}_\# (P_Y)_\# \tilde\pi_{n,\infty}),
        \\
        \bar{\pi}\rest_{I_\infty \times J_m} 
        &\coloneqq 
        \tfrac{1}{\upsilon_m}
        (\Phi^{-1}_\# (P_X)_\# \tilde\pi_{\infty,m} \otimes \lambda\rest_{J_m}),
        &
        \bar{\pi}\rest_{I_\infty \times J_\infty} 
        &\coloneqq 
        (\Phi^{-1} \times \Psi^{-1})_\# \tilde \pi_{\infty,\infty}.
    \end{align*}
    The marginals of $\bar\pi$ are given by
    \begin{align*}
        (P_I)_\# \bar{\pi} \rest_{I_n} 
        &=
        \tfrac{1}{\xi_n}
        \Bigl( \sum_{n=1}^\infty \pi_{n,m} + \pi_{n,\infty} \Bigr)
        \lambda\rest_{I_n}
        =
        \lambda\rest_{I_n},
        \quad n\in\NN,
        \\
        (P_I)_\# \bar{\pi} \rest_{I_\infty}
        &=
        \sum_{m=1}^\infty \Phi^{-1}_\# (P_X)_\# \tilde\pi_{\infty,m}
        + \Phi^{-1}_\# (P_X)_\# \tilde\pi_{\infty,\infty}
        =
        \Phi^{-1}_\# \tilde\xi
        =
        \lambda\rest_{I_\infty},
    \end{align*}
    and analogously $(P_J)_\# \bar{\pi} = \lambda$,
    which shows $\bar{\pi} \in \Pi(\II,\JJ)$.
    Computing $(\Phi \times \Psi)_\# \bar\pi_{I_\bullet \times J_\bullet}$,
    we obtain all terms in \cref{eq:atom-plan}
    yielding
    $\Pi(\XX,\YY) \subset (\Phi \times \Psi)_\# \Pi(\II,\JJ)$.
    To show the opposite inclusion,
    we calculate the marginals of
    $(\Phi \times \Psi)_\# \bar{\pi}$
    for any $\bar{\pi} \in \Pi(\II,\JJ)$,
    which are
    \begin{equation*}
        (P_X)_\# (\Phi \times \Psi)_\# \bar \pi
        =
        \Phi_\# (P_I)_\# \bar \pi
        =
        \Phi_\# \lambda
        =
        \xi
    \end{equation*}
    and similarly $(P_{Y})_\# \pi = \upsilon$.
    Hence $(\Phi \times \Psi)_\# \Pi(\II,\JJ) = \Pi(\XX,\YY)$.
    Finally,
    for each pair
    $\pi \in \Pi(\XX,\YY)$ 
    and
    $\bar{\pi} \in \Pi(\II,\JJ)$ 
    with $\pi = (\Phi,\Psi)_\# \bar{\pi}$, 
    we have
    \begin{equation*}
        \Fgw^{\XX,\YY}(\pi)
        =
        \lVert g(\cdot_1, \cdot_3) - h(\cdot_2, \cdot_4) \rVert_{L^2_\sym(\pi \otimes \pi)}
        =
        \lVert \bar g(\cdot_1, \cdot_3) - \bar h(\cdot_2, \cdot_4) \rVert
        _{L^2_\sym(\bar \pi \otimes \bar \pi)}
        =
        \Fgw^{\II,\JJ}(\bar\pi).
    \end{equation*}
    Since the GW distance is independent of the representative,
    meaning $\GW(\XX,\YY) = \GW(\II,\JJ)$,
    we thus have 
    $\Pio(\XX,\YY) = (\Phi \times \Psi)_\# \Pio(\II,\JJ)$.
\end{proof}

The GW distance between $\Xf\in\GM$ and $\Yf \in \GM$ is,
by construction,
independent of the current representative,
i.e.\ $\GW(\Xf,\Yf) \coloneqq \GW(\XX,\YY)$
for any $\XX \in \Xf$ and $\YY \in \Yf$.
In order to transfer an (optimal) plan $\pi \in \Pi(\XX,\YY)$
between different representatives,
we use gluings and meltings as an alternative to the parametrizations above.
For the plans $\pi_1 \in \Pi(\XX, \YY)$ and $\pi_2 \in \Pi(\XX,\ZZ)$
between the gm-spaces 
$\XX \coloneqq (X,g,\xi)$,
$\YY \coloneqq(Y, h, \upsilon)$,
and $\ZZ \coloneqq (Z, f, \zeta)$,
the \emph{set of gluings} along $\XX$ is defined by
\begin{equation*}
    \Glue_\XX(\pi_1,\pi_2) 
    \coloneqq
    \Glue_\XX^{\YY,\ZZ}(\pi_1,\pi_2) 
    \coloneqq
    \bigl\{
    \gamma \in \p(X \times Y \times Z)
    : 
    (P_{X \times Y})_\# \gamma = \pi_1,
    (P_{X \times Z})_\# \gamma = \pi_2
    \bigr\}.
\end{equation*}
Due to Dudley's lemma \cite[Lem~8.4]{ABS21},
the set of gluings is in particular non-empty.
By construction,
the 2-marginal $(P_{Y \times Z})_\# \gamma$ 
of any gluing $\gamma \in \Gamma_\XX(\pi_1,\pi_2)$
lies in $\Pi(\YY,\ZZ)$.
This plan is also called a \emph{melting} of $\pi_1$ and $\pi_2$,
cf.\ \cite{sturm2020space}.
More generally,
the \emph{set of meltings} along $\XX$ is defined by
\begin{equation*}
    \Melt_\XX(\pi_1,\pi_2) 
    \coloneqq
    \Melt_\XX^{\YY,\ZZ}(\pi_1,\pi_2) 
    \coloneqq
    \bigl\{
    (P_{Y \times Z})_\# \gamma 
    : 
    \gamma \in \Glue_{\XX}(\pi_1,\pi_2)
    \bigr\}.
\end{equation*}
The sets of gluings and meltings can be extended
to arbitrary 
$\Omega_1 \subset \Pi(\XX,\YY)$
and $\Omega_2 \subset \Pi(\XX,\ZZ)$
by setting
\begin{equation*}
    \Glue_\XX(\Omega_1,\Omega_2) 
    \coloneqq 
    \bigcup_{\substack{\pi_1 \in \Omega_1 \\ \pi_2 \in \Omega_2}} 
    \Glue_\XX(\pi_1,\pi_2) 
    \quad\text{and}\quad 
    \Melt_\XX(\Omega_1,\Omega_2)
    \coloneqq
    \bigcup_{\substack{\pi_1 \in \Omega_1 \\ \pi_2 \in \Omega_2}}
    \Melt_\XX(\pi_1,\pi_2).
\end{equation*}
In the following,
we will mainly use gluings and meltings 
to transfer (optimal) plans between different representatives.

\begin{lemma}
    \label{lem:melting_remains_optimal}
    Let $\XX$, $\tilde \XX$, and $\YY$ be gm-spaces
    with $\XX \simeq \tilde \XX$.
    For any $\sigma \in \Pio(\XX,\tilde \XX)$,
    it holds
    \begin{equation*}
        \Pi(\tilde \XX,\YY) = \Melt_\XX(\sigma,\Pi(\XX,\YY))
        \quad\text{and}\quad 
        \Pio(\tilde \XX,\YY) = \Melt_\XX(\sigma,\Pio(\XX,\YY)).
    \end{equation*}
\end{lemma}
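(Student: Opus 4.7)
The plan is to prove both equalities by mutual inclusion, with the bulk of the work going into the non-trivial direction $\Pi(\tilde\XX,\YY) \subset \Melt_\XX(\sigma, \Pi(\XX,\YY))$ and then upgrading this to optimal plans using that $\sigma$ realizes the zero GW distance between $\XX$ and $\tilde\XX$.

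\textbf{Step 1: The easy inclusions.} If $\gamma \in \Glue_\XX(\sigma, \pi)$ for some $\pi \in \Pi(\XX,\YY)$, then the melting $\tilde\pi \coloneqq (P_{\tilde X \times Y})_\# \gamma$ has $\tilde X$-marginal $(P_{\tilde X})_\# \sigma = \tilde\xi$ and $Y$-marginal $(P_Y)_\# \pi = \upsilon$, so $\tilde\pi \in \Pi(\tilde\XX,\YY)$. This yields $\Melt_\XX(\sigma, \Pi(\XX,\YY)) \subset \Pi(\tilde\XX,\YY)$ and, once optimality is established, the analogous inclusion for $\Pio$.

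\textbf{Step 2: The reverse inclusion.} Given $\tilde\pi \in \Pi(\tilde\XX,\YY)$, both $\sigma$ and $\tilde\pi$ share the common marginal $\tilde\xi$ on $\tilde X$. I would invoke the standard gluing lemma (Dudley's lemma, used in the paper in the context of $\Glue$) to obtain $\gamma \in \pmeas(X \times \tilde X \times Y)$ with $(P_{X \times \tilde X})_\# \gamma = \sigma$ and $(P_{\tilde X \times Y})_\# \gamma = \tilde\pi$. Setting $\pi \coloneqq (P_{X \times Y})_\# \gamma$, we automatically get $(P_X)_\# \pi = \xi$ and $(P_Y)_\# \pi = \upsilon$, so $\pi \in \Pi(\XX,\YY)$. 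By construction $\gamma \in \Glue_\XX(\sigma, \pi)$ and $(P_{\tilde X \times Y})_\# \gamma = \tilde\pi$, hence $\tilde\pi \in \Melt_\XX(\sigma, \Pi(\XX,\YY))$.

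\textbf{Step 3: Preservation of optimality.} Since $\sigma \in \Pio(\XX, \tilde\XX)$ and $\GW_2(\XX, \tilde\XX) = 0$, the GW functional vanishes on $\sigma$, which means
\[
    g(x,x') = \tilde g(\tilde x, \tilde x')
    \quad \text{for $(\sigma \otimes \sigma)$-a.e.\ $((x,\tilde x), (x',\tilde x'))$.}
\]
For the $\gamma$ constructed above, $(P_{(X \times \tilde X)^2})_\# (\gamma \otimes \gamma) = \sigma \otimes \sigma$, so this identity also holds $(\gamma \otimes \gamma)$-a.e. Substituting into the GW functional and using that $\gamma$ has $(X \times Y)$-marginal $\pi$ and $(\tilde X \times Y)$-marginal $\tilde\pi$, I obtain
\[
    \Fgw^{\XX,\YY}(\pi)
    = \lVert g(\cdot_1,\cdot_4) - h(\cdot_3,\cdot_6) \rVert_{L^2_\sym(\gamma \otimes \gamma)}
    = \lVert \tilde g(\cdot_2,\cdot_5) - h(\cdot_3,\cdot_6) \rVert_{L^2_\sym(\gamma \otimes \gamma)}
    = \Fgw^{\tilde\XX,\YY}(\tilde\pi).
\]
Because the GW distance is an invariant of the equivalence class, $\GW_2(\XX,\YY) = \GW_2(\tilde\XX,\YY)$, and therefore $\pi$ is optimal iff $\tilde\pi$ is. Combining this with Steps~1 and~2 applied to optimal plans yields both claimed equalities.

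The main obstacle is Step~3, specifically the bookkeeping that the $(\sigma \otimes \sigma)$-a.e.\ coincidence of the gauges lifts cleanly to a $(\gamma \otimes \gamma)$-a.e.\ coincidence on the enlarged product space; everything else is essentially marginal computation and an application of Dudley's gluing lemma.
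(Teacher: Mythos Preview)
Your proposal is correct and follows essentially the same approach as the paper: the paper also obtains the reverse inclusion by gluing $\sigma^\tT$ and $\tilde\pi$ along $\tilde\XX$ (which is exactly your invocation of Dudley's lemma on the shared $\tilde\xi$-marginal), and it proves preservation of optimality via the same $(\sigma\otimes\sigma)$-a.e.\ identity $g=\tilde g$ lifted to $(\gamma\otimes\gamma)$. The only cosmetic difference is that the paper phrases Step~3 as a one-sided inequality $\Fgw^{\XX,\YY}(\pi)=\Fgw^{\tilde\XX,\YY}(\tilde\pi)\ge\GW(\tilde\XX,\YY)$ with tightness from $\XX\simeq\tilde\XX$, and then says the opposite inclusion is analogous, whereas you state the symmetric ``$\pi$ optimal iff $\tilde\pi$ optimal'' directly.
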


\begin{proof}
    Let $\XX \coloneqq (X, g, \xi)$,
    $\tilde \XX \coloneqq (\tilde X, \tilde g, \tilde\xi)$,
    and $\YY \coloneqq (Y, h, \upsilon)$.
    As elaborated above, 
    we obtain 
    $\Melt_\XX(\sigma,\pi) \subset \Pi(\tilde\XX,\YY)$ 
    for all $\pi \in \Pi(\XX,\YY)$,
    which ensures 
    $\Melt_\XX(\sigma,\Pi(\XX,\YY)) 
    \subset \Pi(\tilde\XX,\YY)$.
    The other way round, 
    let $\tilde \pi \in \Pi(\tilde \XX,\YY)$
    and $\gamma \in \Glue_{\tilde \XX} (\sigma^\tT, \tilde\pi)$,
    where $\sigma^\tT \in \Pio(\tilde \XX, \XX)$ denotes the reversion of 
    $\sigma \in \Pio(\XX,\tilde\XX)$.
    By construction,
    $\pi \coloneqq (P_{X \times Y})_\# \gamma \in \Pi(\XX,\YY)$
    and $(P_{X \times \tilde{X}})_\# \gamma = \sigma$.
    Hence,
    $\gamma \in \Glue_\XX(\sigma,\pi)$
    and finally
    $\tilde{\pi} = (P_{\tilde{X}\times Y})_\# \gamma \in \Melt_{\XX}(\sigma,\pi)$.
    Thus, we obtain the first identity.
    For the second identity,
    let $\pi \in \Pio(\XX,\YY)$,
    we consider $\tilde\pi \in \Melt_\XX(\sigma,\pi)$
    with underlying gluing $\gamma \in \Glue_\XX(\sigma,\pi)$.
    Since $g(\cdot_1,\cdot_3) = \tilde g(\cdot_2,\cdot_4)$
    a.e.\ with respect to 
    $\sigma(\cdot_1,\cdot_2) \otimes \sigma(\cdot_3,\cdot_4)$ 
    and hence 
    $g(P_X(\cdot_1), P_X(\cdot_3)) 
    = \tilde g(P_{\tilde X}(\cdot_2), P_{\tilde X}(\cdot_4))$ 
    a.e.\ with respect to 
    $\gamma(\cdot_1,\cdot_2) \otimes \gamma(\cdot_3,\cdot_4)$,
    we have
    \begin{align*}
        \GW(\XX,\YY)
        &= \Fgw^{\XX,\YY}(\pi)
        = \iint_{(X \times \tilde X \times Y)^2}
        \lvert g(x,x') - h(y,y') \rvert^2 
        \dx \gamma(x, \tilde x, y) \dx \gamma(x', \tilde x', y')
        \\
        &= \iint_{(X\times \tilde X \times Y)^2}
        \lvert \tilde g(\tilde x, \tilde x') - h(y,y') \rvert^2 
        \dx \gamma(x, \tilde x, y) \dx \gamma(x', \tilde x', y')
        = \Fgw^{\tilde\XX,\YY}(\tilde\pi)
        \ge \GW(\tilde \XX,\YY).
    \end{align*}
    Due to $\XX \simeq \tilde \XX$,
    the last inequality has to be tight
    showing $\tilde \pi \in \Pio(\tilde\XX,\YY)$.
    The opposite inclusion follows in an analogous way 
    as in the first part.
\end{proof}

If we melt a self-coupling $\sigma \in \Pio(\XX,\XX)$
with an optimal plan $\pi \in \Pio(\XX,\YY)$,
the resulting plans again lies in $\Pio(\XX,\YY)$. 
Every pair of plans 
that can be constructed in this manner
are called equivalent,
i.e.\
we say that $\pi_1,\pi_2 \in \Pio(\XX,\YY)$ 
are \emph{equivalent} with respect to $\XX$, 
written $\pi_1 \simeq_\XX \pi_2$, 
if there exists $\sigma \in \Pio(\XX,\XX)$ 
so that $\pi_1 \in \Melt_\XX(\sigma,\pi_2)$
and thus $\pi_2 \in \Melt_\XX(\sigma^\tT,\pi_1)$.

\begin{proposition}\label{prop:sim_eq_continuous}
    The relation $\simeq_\XX$ is weakly continuous, 
    i.e.\ if $\pi_2 \in \Pio(\XX,\YY)$ 
    and $\pi_{1,n} \simeq_\XX \pi_2$ for $n \in \NN$
    with $\pi_{1,n} \weakly \pi_1$,
    then $\pi_1 \simeq_\XX \pi_2$.
\end{proposition}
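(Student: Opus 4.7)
The plan is to construct a witnessing self-coupling for $\pi_1 \simeq_\XX \pi_2$ by extracting a weak limit of three-marginal gluings. Write $\XX = (X,g,\xi)$ and $\YY = (Y,h,\upsilon)$, and let $X_1, X_2$ denote two copies of $X$. For each $n$, by the hypothesis $\pi_{1,n} \simeq_\XX \pi_2$, pick $\sigma_n \in \Pio(\XX,\XX)$ and a gluing $\gamma_n \in \Glue_\XX(\sigma_n,\pi_2) \subset \pmeas(X_1 \times X_2 \times Y)$ with $(P_{X_2 \times Y})_\# \gamma_n = \pi_{1,n}$; by definition of the gluing set, this automatically enforces $(P_{X_1 \times X_2})_\# \gamma_n = \sigma_n$ and $(P_{X_1 \times Y})_\# \gamma_n = \pi_2$. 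The three one-dimensional marginals of $\gamma_n$ are then the fixed, tight probability measures $\xi, \xi, \upsilon$, so $\{\gamma_n\}$ is tight on $X_1 \times X_2 \times Y$ and Prokhorov's theorem yields a subsequence $\gamma_{n_k} \weakly \gamma$ for some $\gamma \in \pmeas(X_1 \times X_2 \times Y)$. Weak continuity of push-forwards gives $(P_{X_1 \times Y})_\# \gamma = \pi_2$, $(P_{X_2 \times Y})_\# \gamma = \pi_1$ (using $\pi_{1,n_k} \weakly \pi_1$), and $(P_{X_1 \times X_2})_\# \gamma = \sigma$ where $\sigma$ is the resulting weak limit of $\{\sigma_{n_k}\}$.

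Once we know that $\sigma \in \Pio(\XX,\XX)$, the triple $\gamma$ lies in $\Glue_\XX(\sigma,\pi_2)$ by construction, so $\pi_1 = (P_{X_2 \times Y})_\# \gamma \in \Melt_\XX(\sigma,\pi_2)$, which is exactly $\pi_1 \simeq_\XX \pi_2$. The ambient membership $\pi_1 \in \Pio(\XX,\YY)$ then follows from \cref{lem:melting_remains_optimal} applied with $\tilde \XX = \XX$.

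The hard part is the verification that $\sigma \in \Pio(\XX,\XX)$, equivalently $\Fgw^{\XX,\XX}(\sigma) = 0$, because the gauge $g \in L^2_\sym(X \times X, \xi\otimes\xi)$ need not be continuous and a direct Portmanteau argument does not apply. I would resolve this by density: approximate $g$ in $L^2(\xi\otimes\xi)$ by a symmetric $g_\ell \in C_b(X \times X)$, which is possible since bounded continuous functions are $L^2$-dense on a Polish probability space. Then $(g_\ell(\cdot_1,\cdot_3)-g_\ell(\cdot_2,\cdot_4))^2$ is bounded and continuous on $(X \times X)^2$, and $\sigma_{n_k} \otimes \sigma_{n_k} \weakly \sigma \otimes \sigma$ forces the corresponding integrals to converge. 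Because the first-pair and second-pair marginals of both $\sigma_{n_k}\otimes\sigma_{n_k}$ and $\sigma\otimes\sigma$ coincide with $\xi \otimes \xi$, a triangle inequality in $L^2$ turns the approximation error into the uniform bound $\Fgw^{\XX,\XX}(\sigma) \le 4\|g-g_\ell\|_{L^2(\xi\otimes\xi)}$, and letting $\ell \to \infty$ forces $\Fgw^{\XX,\XX}(\sigma) = 0$. Alternatively, this weak lower semicontinuity is part of the framework underlying Sturm's existence result \cite[Prop.~5.4]{sturm2020space} and can be invoked as a black box.
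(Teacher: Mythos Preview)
Your proof is correct and follows essentially the same route as the paper: extract a weakly convergent subsequence of the three-marginal gluings, pass the marginal conditions to the limit, and verify that the limit self-coupling $\sigma$ is GW-optimal. The only cosmetic differences are that the paper first reduces to representatives on $[0,1]$ (whereas you invoke Prokhorov directly via tightness of the fixed marginals) and cites \cite[Lem.~5.5]{sturm2020space} for the weak continuity of $\Fgw$ (whereas you spell out the underlying $C_b$-density argument); neither changes the substance.
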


\begin{proof}
    Without loss of generality, 
    we consider the two copies
    $\XX_1 \coloneqq (I_1,\bar{g},\lambda)$
    and $\XX_2 \coloneqq (I_2,\bar{g},\lambda)$
    of $\XX$, where $I_1 = I_2 = [0,1]$.
    In the same way,
    let $\YY = (J,\bar{h},\lambda)$ with $J = [0,1]$.
    Since $\pi_{1,n} \simeq_{\XX} \pi_2$,
    we have
    $\pi_{1,n} \in \Pio(\XX_1,\YY)$.
    Furthermore, due to \cite[Lem.~5.5]{sturm2020space},
    it holds
    \begin{equation*}
    \Fgw^{\XX_1,\YY}(\pi_1) 
    = \lim_{n \to \infty} \Fgw^{\XX_1,\YY}(\pi_{1,n})
    = \GW(\XX_1,\YY),
    \end{equation*}
    so that $\pi_1 \in \Pio(\XX_1,\YY)$.
    Let $\sigma_n \in \Pio(\XX_1,\XX_2)$
    be so that 
    $\pi_{1,n} \in \Melt_{\XX_2}(\sigma_n,\pi_2)$.
    We denote the 
    corresponding gluing by
    $\gamma_n \in \Gamma_{\XX_2}(\sigma_n,\pi_2)$,
    i.e.\ $(P_{I_1 \times J})_\# \gamma_n = \pi_{1,n}$.
    Since $(\gamma_n)_{n \in \NN} \subset \Pi(\XX_1,\XX_2,\YY)$
    and the latter is weakly compact, we may extract a converging subsequence, i.e.\
    $\gamma_{n_\ell} \weakly \gamma$ as $\ell \to \infty$.
    The marginal projections are weakly continuous,
    hence we obtain
    $(P_{I_1 \times J})_\# \gamma 
    = \lim_{\ell \to \infty} \pi_{1,n_\ell} = \pi_1$
    as well as
    $\sigma 
    \coloneqq (P_{I_1 \times I_2})_\# \gamma
    = \lim_{\ell \to \infty} \sigma_{n_\ell} \in \Pio(\XX_1,\XX_2)$.
    The optimality of $\sigma$ can be shown
    analogously to the optimality of $\pi_1$ above.
    Finally, this gives
    $\gamma \in \Gamma_\YY(\sigma,\pi_1)$
    with
    $(P_{I_2 \times J})_\# \gamma = \pi_2$
    as desired.
\end{proof}

\section{Multi-Marginal GW}\label{sec:3}

We focus on a multi-marginal formulation of \cref{eq:GW},
which has recently been proposed in \cite{BBS2022multi}.
To introduce the simultaneous transport between
the gm-spaces $\XX_i \coloneqq (X_i, g_i, \xi_i)$ with $i=1,\dots,N$,
we denote the Cartesian product over the domains as 
$X_\times \coloneqq \bigtimes_{i=1}^N X_i$
with elements $x_\times \coloneqq (x_1,\dots,x_N)$, $x_i \in X_i$,
and the \emph{set of multi-marginal transport plans} as
$\Pi(\XX_1,\dots,\XX_N)
\coloneqq 
\{\pi \in \pmeas(X_\times) : (P_{X_i})_\# \pi = \xi_i\}$.
The \emph{$(N-1)$-dimensional probability simplex} is characterized by 
\[
\Delta_{N-1} 
\coloneqq 
\{(\rho_1,\dotsc,\rho_N) \in [0,1] : \sum_{i=1}^N \rho_i = 1\}.
\]
For $\rho \in \Delta_{N-1}$, 
we define the 
\emph{multi-marginal GW (MGW) problem} by
\begin{equation}
    \label{eq:mgw2}
    \MGW_\rho(\XX_1,\dots,\XX_N)
    \coloneqq
    \inf_{\pi \in \Pi(\XX_1,\dots,\XX_N)}
    \Fmgw^{\XX_1,\dots,\XX_N} (\pi)
\end{equation}
with the MGW functional
\begin{equation}
    \label{eq:mgw-fun}
    \Fmgw^{\XX_1,\dots,\XX_N} (\pi)
    \coloneqq
    \iint_{X_\times^2} 
    \frac{1}{2}
    \sum_{i,j=1}^N \rho_i\rho_j \; 
    \lvert g_i(x_i,x_i') - g_j(x_j,x'_j) \rvert^2
    \dx \pi(x_\times) \dx \pi(x'_\times).
\end{equation}
The reason why we restrict ourselves to
weights of the multiplicative form $\rho_i \rho_j$ 
in the respective summand will become evident 
when we introduce 
the free-support GW barycenter problem later on.
Depending on the situation more general weights
of the form $w_{i,j}$ may be considered.
The \emph{set of optimal plans} 
with respect to \cref{eq:mgw2}
is denoted by
$\Pio^\rho(\XX_1,\dots,\XX_N)$,
although we usually omit the superscript $\rho$ 
if it is clear from context.
The identities in \cref{lem:bi-par} 
generalize to the multi-marginal setting.

\begin{lemma}
    \label{lem:mul-par}
    Let the gm-spaces $\XX_i \coloneqq (X_i, g_i, \xi_i)$
    be homomorphic to $\II_i \coloneqq ([0,1],\bar g_i, \lambda)$
    via the parametrization $\Phi_i \in \para(\XX_i)$
    with $n=1,\dots,N$.
    Then
    \begin{align*}
        \Pi(\XX_1, \dots, \XX_N) 
        &=
        (\Phi_1 \times \cdots \times \Phi_N)_\# 
        \Pi(\II_1,\dots,\II_N)
        \shortintertext{and}
        \Pio(\XX_1, \dots, \XX_N) 
        &=
        (\Phi_1 \times \cdots \times \Phi_N)_\# 
        \Pio(\II_1,\dots,\II_N)
    \end{align*}
\end{lemma}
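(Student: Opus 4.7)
My plan is to directly generalize the two-marginal construction of \cref{lem:bi-par}. The proof splits into three parts: (i) the inclusion $(\Phi_1 \times \cdots \times \Phi_N)_\# \Pi(\II_1,\dots,\II_N) \subset \Pi(\XX_1,\dots,\XX_N)$ via a short marginal check, (ii) the reverse inclusion via an explicit lift based on a multi-index atomic decomposition, and (iii) preservation of optimality under the joint pull-back. For (i), if $\bar\pi \in \Pi(\II_1,\dots,\II_N)$, the commutativity of marginal projections with pushforwards yields
\[
(P_{X_i})_\# (\Phi_1 \times \cdots \times \Phi_N)_\# \bar\pi = (\Phi_i)_\# (P_{[0,1]})_\# \bar\pi = (\Phi_i)_\# \lambda = \xi_i
\]
for every $i$, so the pushforward lies in $\Pi(\XX_1,\dots,\XX_N)$.

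\textbf{Constructing the lift.} For (ii), fix $\pi \in \Pi(\XX_1,\dots,\XX_N)$ and the atomic decompositions $\xi_i = \sum_{n=1}^\infty \xi_{i,n} \delta_{x_{i,n}} + \tilde\xi_i$ together with the associated partitions $(I_{i,n})_{n \in \NN \cup \{\infty\}}$ of $[0,1]$ from \cref{eq:para}. Recall that $\Phi_i$ maps each $I_{i,n}$ (for $n\in\NN$) constantly to $x_{i,n}$ and is a Borel-measurable bijection from $I_{i,\infty}$ onto $\supp(\tilde\xi_i)$. For each multi-index $\mathbf n = (n_1,\dots,n_N) \in (\NN \cup \{\infty\})^N$, decompose $\pi$ into the piece $\tilde\pi_{\mathbf n}$ obtained by restricting to $A_{1,n_1} \times \cdots \times A_{N,n_N}$, where $A_{i,n_i} = \{x_{i,n_i}\}$ when $n_i \in \NN$ and $A_{i,n_i} = \supp(\tilde\xi_i)$ when $n_i = \infty$. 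Writing $K \coloneqq \{i : n_i \in \NN\}$, define $\bar\pi$ on the cell $I_{1,n_1} \times \cdots \times I_{N,n_N}$ as the tensor product of the uniform pieces $\tfrac{1}{\xi_{i,n_i}} \lambda\rest_{I_{i,n_i}}$ for $i \in K$ with the pullback through $\Phi_j^{-1}$ (for $j \notin K$) of the joint diffuse marginal of $\tilde\pi_{\mathbf n}$ in the coordinates outside $K$, scaled by $\prod_{i \in K} \xi_{i,n_i}^{-1}$. The two-marginal case of \cref{lem:bi-par} already spells out all four combinations for $N=2$ and serves as a template. A coordinatewise marginal computation as in that lemma then gives $(P_{[0,1]})_\# \bar\pi = \lambda$ for every marginal, so $\bar\pi \in \Pi(\II_1,\dots,\II_N)$, and summing the pushforwards $(\Phi_1 \times \cdots \times \Phi_N)_\# \bar\pi\rest_{I_{1,n_1} \times \cdots \times I_{N,n_N}}$ over $\mathbf n$ recovers $\pi$.

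\textbf{Optimality and expected obstacle.} For (iii), since $\bar g_i = \Phi_i^\# g_i$ by definition of a parametrization, the MGW integrand in \cref{eq:mgw-fun} pulled back through $\Phi_1 \times \cdots \times \Phi_N$ coincides with the corresponding integrand over $([0,1]^N)^2$. Consequently, $\Fmgw^{\XX_1,\dots,\XX_N}(\pi) = \Fmgw^{\II_1,\dots,\II_N}(\bar\pi)$ whenever $(\Phi_1 \times \cdots \times \Phi_N)_\# \bar\pi = \pi$, which yields $\MGW_\rho(\XX_1,\dots,\XX_N) = \MGW_\rho(\II_1,\dots,\II_N)$ and, via (i) and (ii), the identification $\Pio(\XX_1,\dots,\XX_N) = (\Phi_1 \times \cdots \times \Phi_N)_\# \Pio(\II_1,\dots,\II_N)$. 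I expect the main obstacle to be purely bookkeeping: the $N$-fold atomic decomposition has infinitely many cells indexed by $\mathbf n \in (\NN \cup \{\infty\})^N$, each mixing atomic and diffuse coordinates in one of $2^N$ combinatorial patterns, and the piecewise definition of $\bar\pi$ must be checked to be $\sigma$-additive and to have the correct marginals. An alternative route is induction on $N$ by iterating \cref{lem:bi-par} on the two-split $(\XX_i, \prod_{j \neq i} \XX_j)$, but the product side carries no canonical gauge, and the resulting bookkeeping is comparable.
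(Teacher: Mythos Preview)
Your proposal is correct and follows precisely the route the paper indicates: the paper's own proof simply states that the first identity is established ``using a similar construction as in the proof of \cref{lem:bi-par} by considering the transports between the different atomic and diffuse parts,'' and that the second follows from $\Fmgw^{\XX_1,\dots,\XX_N}(\pi) = \Fmgw^{\II_1,\dots,\II_N}(\bar\pi)$ whenever $\pi = (\Phi_1 \times \cdots \times \Phi_N)_\# \bar\pi$. Your parts (i)--(iii) spell out exactly this sketch, including the multi-index cell decomposition and the pull-back of the MGW integrand; the only (harmless) slip is that in the lift you mention the scaling factor $\prod_{i\in K}\xi_{i,n_i}^{-1}$ twice---once in the normalized uniform pieces and once as an overall scaling---but the intended construction is clear.
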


\begin{proof}
    The first identity can be established 
    using a similar construction as in the proof of \cref{lem:bi-par}
    by considering the transports between 
    the different atomic and diffuse parts.
    The second identity follows from
    $\Fmgw^{\XX_1,\dots,\XX_N}(\pi) = \Fmgw^{\II_1,\dots,\II_N}(\bar \pi)$
    for every $\pi = (\Phi_1 \times \cdots \times \Phi_N)_\# \bar \pi$.
\end{proof}

An immediate consequence of the previous lemma is that
the MGW problem is independent of the actual representation
of the homomorphic classes---%
a property well known for the GW distance. 

\begin{proposition}
    \label{prop:inv-hom}
    Let $\XX_i \simeq \tilde\XX_i$ for $i=1,\dots,N$.
    Then 
    \begin{equation*}
        \MGW_\rho(\XX_1,\dots,\XX_N) 
        = \MGW_\rho(\tilde \XX_1,\dots, \tilde \XX_N).
    \end{equation*}
    In particular, 
    $\MGW_\rho(\Xf_1,\dots,\Xf_N) \coloneqq \MGW_\rho(\XX_1,\dots,\XX_N)$ with $\Xf_i = \llbracket \XX_i \rrbracket \in \GM$ is well-defined.
\end{proposition}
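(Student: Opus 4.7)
My approach is to reduce to the single-marginal case by iterating: it suffices to show that replacing one space $\XX_i$ (say $i=1$) by a homomorphic $\tilde\XX_1$ leaves the MGW value unchanged; the general statement then follows by applying this $N$ times. This is the multi-marginal analogue of \cref{lem:melting_remains_optimal}, and the proof strategy mirrors it closely, with the melting now being performed on only one slot of an $N$-fold plan.

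\textbf{Step 1: a melting construction along one marginal.} Fix $\sigma \in \Pio(\XX_1,\tilde\XX_1)$; since $\XX_1 \simeq \tilde\XX_1$, we have $\Fgw^{\XX_1,\tilde\XX_1}(\sigma) = 0$, so
\begin{equation*}
    g_1(x_1,x_1') = \tilde g_1(\tilde x_1, \tilde x_1')
    \quad \text{for } \sigma\otimes\sigma\text{-a.e.\ } (x_1,\tilde x_1, x_1', \tilde x_1').
\end{equation*}
Given any $\pi \in \Pi(\XX_1,\XX_2,\dots,\XX_N)$, Dudley's lemma produces a gluing $\gamma \in \pmeas(X_1 \times \tilde X_1 \times X_2 \times \cdots \times X_N)$ with $(P_{X_1 \times \tilde X_1})_\#\gamma = \sigma$ and $(P_{X_1 \times \cdots \times X_N})_\#\gamma = \pi$. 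Setting $\tilde\pi \coloneqq (P_{\tilde X_1 \times X_2 \times \cdots \times X_N})_\#\gamma$ yields an element of $\Pi(\tilde\XX_1,\XX_2,\dots,\XX_N)$, since $(P_{\tilde X_1})_\# \gamma = (P_{\tilde X_1})_\# \sigma = \tilde\xi_1$ and all other marginals are inherited from $\pi$.

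\textbf{Step 2: preservation of the MGW functional.} Writing the MGW functional in \cref{eq:mgw-fun} against $\gamma\otimes\gamma$ rather than $\pi\otimes\pi$ and using that $(P_{X_1\times\tilde X_1})_\#\gamma = \sigma$, the a.e.\ identity above lets us substitute $\tilde g_1(\tilde x_1,\tilde x_1')$ for every occurrence of $g_1(x_1,x_1')$ in the double sum without altering the integral. The terms with indices $i,j \ge 2$ are unchanged. After the substitution, the integrand depends only on $(\tilde x_1,x_2,\dots,x_N)$ and $(\tilde x_1',x_2',\dots,x_N')$, so it reduces to an integral against $\tilde\pi\otimes\tilde\pi$, giving
\begin{equation*}
    \Fmgw^{\XX_1,\XX_2,\dots,\XX_N}(\pi)
    = \Fmgw^{\tilde\XX_1,\XX_2,\dots,\XX_N}(\tilde\pi).
\end{equation*}
The symmetric construction with $\sigma^\tT \in \Pio(\tilde\XX_1,\XX_1)$ shows that every $\tilde\pi \in \Pi(\tilde\XX_1,\XX_2,\dots,\XX_N)$ arises from some $\pi \in \Pi(\XX_1,\XX_2,\dots,\XX_N)$ with equal functional value. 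Taking infima yields $\MGW_\rho(\XX_1,\XX_2,\dots,\XX_N) = \MGW_\rho(\tilde\XX_1,\XX_2,\dots,\XX_N)$.

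\textbf{Step 3: iteration and well-definedness.} Applying Step~2 in turn to each index $i=1,\dots,N$ gives the full claim $\MGW_\rho(\XX_1,\dots,\XX_N) = \MGW_\rho(\tilde\XX_1,\dots,\tilde\XX_N)$, so the definition $\MGW_\rho(\Xf_1,\dots,\Xf_N) \coloneqq \MGW_\rho(\XX_1,\dots,\XX_N)$ does not depend on the chosen representatives. The only delicate point is Step~2: one has to verify carefully that the substitution $g_1 \mapsto \tilde g_1$ on the full product $\gamma\otimes\gamma$ is legitimate, which relies on the fact that the relevant marginal of $\gamma\otimes\gamma$ onto $(X_1\times\tilde X_1)^2$ is exactly $\sigma\otimes\sigma$. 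This is routine but is the one place where the multi-marginal bookkeeping could obscure the argument, so it deserves the most care in the write-up.
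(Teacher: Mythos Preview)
Your proof is correct but follows a different route from the paper. The paper proceeds via parametrizations: for each pair $\XX_i \simeq \tilde\XX_i$ it passes through a third space $\hat\XX_i$ to build parametrizations $\Phi_i \in \para(\XX_i)$ and $\tilde\Phi_i \in \para(\tilde\XX_i)$ with a \emph{common} pullback gauge $\bar g_i$, so that both $\XX_i$ and $\tilde\XX_i$ are homomorphic to the same $\II_i = ([0,1],\bar g_i,\lambda)$; it then invokes \cref{lem:mul-par} to transfer multi-marginal plans and MGW values through $\II_1,\dots,\II_N$, handling all $N$ coordinates at once. Your argument instead replaces one marginal at a time via a gluing along an optimal coupling $\sigma \in \Pio(\XX_i,\tilde\XX_i)$, in direct analogy with \cref{lem:melting_remains_optimal}. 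Your approach is more self-contained---it avoids the Lebesgue--Rokhlin parametrization machinery and needs only Dudley's lemma plus the a.e.\ identity $g_1 = \tilde g_1$ under $\sigma\otimes\sigma$---while the paper's approach pays off later because the $[0,1]$-representation is reused throughout (e.g.\ in \cref{thm:min-exists} and the tangent-space arguments).
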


\begin{proof}
    The homomorphic equivalence of
    $\XX_i = (X_i, g_i, \xi_i)$
    and $\tilde \XX_i = (\tilde X_i, \tilde g_i, \tilde \xi_i)$
    ensures the existence of a third gm-space 
    $\hat\XX_i = (\hat X_i, \hat g_i, \hat \xi_i)$
    and Borel-measurable maps
    $\Psi_i \colon \hat X_i \to X_i$ and 
    $\tilde\Psi_i \colon \hat X_i \to \tilde X_i$
    such that 
    $\xi_i = (\Psi_i)_\# \hat \xi_i$,
    $\tilde \xi_i = (\tilde \Psi_i)_\# \hat \xi_i$,
    and
    $\hat g_i = \Psi_i^\# g_i = \tilde\Psi_i^\# \tilde g_i$.
    Any $\Theta_i \in \para(\hat \XX_i)$ induces
    the parametrizations 
    $\Phi_i \coloneqq \Psi_i \circ \Theta_i \in \para(\XX_i)$
    and
    $\tilde\Phi_i \coloneqq \tilde\Psi_i \circ \Theta_i \in \para(\tilde\XX_i)$
    such that
    $\XX_i$ and $\tilde\XX_i$ are homomorphic to 
    $\II_i \coloneqq ([0,1], \bar g_i, \lambda)$
    with $\bar g_i = \Phi_i^\# g_i = \tilde\Phi_i^\# \tilde g_i$.
    The assertion now follows from 
    \begin{equation*}
            \Fmgw^{\XX_1,\dots,\XX_N}(\pi) 
            =\Fmgw^{\II_1,\dots,\II_N}(\bar\pi)
            =\Fmgw^{\tilde \XX_1,\dots,\tilde \XX_N}(\tilde \pi)
    \end{equation*}
    for
    $\pi = (\Phi_1 \times \cdots \times \Phi_N)_\# \bar\pi$
    and
    $\tilde \pi 
    = (\tilde \Phi_1 \times \cdots \times \tilde \Phi_N)_\# \bar\pi$
    with $\bar\pi \in \Pi(\II_1,\dots,\II_N)$,
    and since \cref{lem:mul-par} further implies the equality 
    of the infimum over all three MGW functionals.
\end{proof}

Through the invariance of the MGW problem with respect the homomorphic classes,
the infimum over the MGW functional \cref{eq:mgw-fun} is attained.

\begin{theorem}\label{thm:min-exists}
    For the gm-spaces $\XX_1, \dots, \XX_N$,
    there exists a multi-marginal plan minimizing \cref{eq:mgw-fun},
    i.e.\ $\Pio(\XX_1,\dots,\XX_N) \ne \emptyset$.
\end{theorem}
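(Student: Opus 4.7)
My plan is to apply the direct method of the calculus of variations after first passing to representatives parametrized on the compact interval $[0,1]$.

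By \cref{prop:inv-hom} combined with \cref{lem:mul-par}, if $\Phi_i \in \para(\XX_i)$ parametrizes $\XX_i$ and $\II_i \coloneqq ([0,1], \bar g_i, \lambda)$ denotes the associated representative with $\bar g_i \coloneqq \Phi_i^\# g_i \in L^2_\sym(\lambda \otimes \lambda)$, then
\begin{equation*}
\Pi(\XX_1,\dots,\XX_N) = (\Phi_1 \times \cdots \times \Phi_N)_\# \Pi(\II_1,\dots,\II_N)
\end{equation*}
and the MGW functional is preserved under this push-forward. Thus it suffices to produce a minimizer on the parametrized side; any such $\bar\pi^*$ then lifts to an element of $\Pio(\XX_1,\dots,\XX_N)$.

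I would then take a minimizing sequence $(\pi_n)_n \subset \Pi(\II_1,\dots,\II_N)$. Since $[0,1]^N$ is compact, $\pmeas([0,1]^N)$ is weakly compact by Prokhorov's theorem. The marginal constraints are preserved under weak limits (marginal projections are weakly continuous), so $\Pi(\II_1,\dots,\II_N)$ is itself weakly compact. Extracting a subsequence yields $\pi_{n_k} \weakly \pi^* \in \Pi(\II_1,\dots,\II_N)$.

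The main obstacle is to show that $\Fmgw^{\II_1,\dots,\II_N}$ is weakly continuous, given that the gauges $\bar g_i$ are only $L^2$-integrable and possibly discontinuous. Expanding the squared modulus and using $\sum_i \rho_i = 1$ gives
\begin{equation*}
\Fmgw^{\II_1,\dots,\II_N}(\pi)
= \sum_{i=1}^N \rho_i \lVert \bar g_i \rVert^2_{L^2(\lambda \otimes \lambda)}
- \sum_{i,j=1}^N \rho_i \rho_j\, I_{i,j}(\pi),
\end{equation*}
where $I_{i,j}(\pi) \coloneqq \iint \bar g_i(x_i,x_i')\,\bar g_j(x_j,x_j') \dx \pi(x_\times) \dx \pi(x'_\times)$. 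The first sum is $\pi$-independent, since the projection of $\pi \otimes \pi$ onto the coordinates $(x_i,x_i')$ is the fixed product $\lambda \otimes \lambda$. For continuous approximations $\bar g_i^\eps, \bar g_j^\eps \in C([0,1]^2)$, the bilinear map $\pi \mapsto I_{i,j}^\eps(\pi)$ is weakly continuous, since $\pi_n \weakly \pi^*$ implies $\pi_n \otimes \pi_n \weakly \pi^* \otimes \pi^*$ and the integrand is bounded and continuous. Choosing $\bar g_i^\eps$ continuous with $\lVert \bar g_i - \bar g_i^\eps \rVert_{L^2(\lambda \otimes \lambda)} < \eps$ and applying Cauchy--Schwarz together with the fixed pair-marginals bounds $\lvert I_{i,j}(\pi) - I_{i,j}^\eps(\pi) \rvert$ by a constant multiple of $\eps$ uniformly in $\pi$. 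Hence $\Fmgw^{\II_1,\dots,\II_N}$ is a uniform limit of weakly continuous functionals, hence itself weakly continuous, and therefore $\pi^*$ attains the infimum.
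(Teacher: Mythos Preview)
Your proof is correct and follows essentially the same route as the paper: reduce to the parametrized representatives $\II_i$ on $[0,1]$ via \cref{lem:mul-par} and \cref{prop:inv-hom}, extract a weakly convergent subsequence by compactness, and establish weak continuity of $\Fmgw^{\II_1,\dots,\II_N}$. The only cosmetic difference is that the paper handles weak continuity by projecting onto the bi-marginals $(P_{I_i \times I_j})_\# \bar\pi$ and invoking \cite[Lem~5.5]{sturm2020space} for each summand, whereas you expand the square and carry out the density/Cauchy--Schwarz argument explicitly---which is precisely the content of that lemma (and of the paper's later \cref{lem:multi-cont}).
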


\begin{proof}
    Through \cref{lem:mul-par,prop:inv-hom},
    the claim is established by showing $\Pio(\II_1,\dots,\II_N) \ne \emptyset$,
    where $\II_i \coloneqq (I_i, \bar g_i,\lambda)$,
    with $I_i \coloneqq [0,1]$,
    is homomorphic to
    $\XX_i \coloneqq (X_i, g_i, \xi_i)$ via some $\Phi_i \in \para(\XX_i)$.
    Let $(\bar\pi_k)_{k\in\NN}$
    with $\bar\pi_k \in \Pi(\II_1,\dots,\II_N)$ 
    be a minimizing sequence of 
    $\smash{\Fmgw^{\II_1,\dots,\II_N}}$ 
    meaning
    $\lim_{k\to\infty} \smash{\Fmgw^{\II_1,\dots,\II_N}(\bar\pi_k)} 
    = \MGW(\II_1,\dots,\II_N)$.
    Since the measures $\bar\pi_k \in \pmeas([0,1]^N)$
    defined on a compact support
    are tight,
    there exists a weakly convergence subsequence $(\bar\pi_{k_\ell})_{\ell \in \NN}$.
    Furthermore,
    using the linearity of the integral
    and projecting the 
    measure $\bar\pi_{k_\ell}$ onto the respective marginals, 
    we obtain
    \begin{align*}
        &\Fmgw^{\II_1,\dots,\II_N}(\bar\pi_{k_\ell})
        \\
        &= \frac{1}{2} \sum_{i,j=1}^N \rho_i \rho_j
        \iint_{(I_i \times I_j)^2} \lvert g_i(x_i,x'_i) - g_j(x_j,x'_j) \rvert^2
        \dx (P_{I_i \times I_j})_\# \bar \pi_{k_\ell}(x_i,x_j)
        \dx (P_{I_i \times I_j})_\# \bar \pi_{k_\ell}(x'_i,x'_j).
    \end{align*}
    The integrals over the bi-marginal plans on the right-hand side are weakly continuous 
    \cite[Lem~5.5]{sturm2020space}.
    Thus, $\smash{\Fmgw^{\II_1,\dots,\II_N}}$ is weakly continuous,
    and the weak limit of $(\bar\pi_{k_\ell})_{\ell \in \NN}$ is a minimizer of \cref{eq:mgw-fun}.
\end{proof}

\section{GW Tangent Space}\label{sec:4}

The space $(\GM,\GW)$ forms a Riemannian orbifold \cite{sturm2006geometry}. 
The reference contains an extensive study of the geometry of $(\GM,\GW)$,
where the tangent space is constructed in the sense of Alexandrov
by considering the completion of the set of geodesic directions
with respect to a specific metric.
Ultimately, the tangent space $\T_{\Yf}$ at $\Yf \in \GM$
can be defined as the set of equivalence classes:
\begin{equation*}
    \T_{\Yf} 
    \coloneqq 
    \biggl(
    \bigcup_{ (Y,h,\upsilon) \in \Yf} \gauges(Y,\upsilon) 
    \biggr)
    \biggm/ \sim,
\end{equation*}
where the union is taken over all representatives 
$(Y,h,\upsilon)$ of $\Yf$. 
Furthermore, 
$f \in \gauges(Y,\upsilon)$ and 
$\tilde{f} \in \gauges(\tilde{Y},\tilde{\upsilon})$
related to the representatives 
$\YY \coloneqq (Y,h,\upsilon)$ 
and $\tilde \YY \coloneqq (\tilde{Y},\tilde{h},\tilde{\upsilon})$
are equivalent, i.e.\ $f \simeq \tilde{f}$, 
if there exists an optimal plan 
$\tau \in \Pio(\YY,\tilde \YY)$
such that $f(\cdot_1,\cdot_3) = \tilde{f}(\cdot_2,\cdot_4)$ 
almost everywhere with respect to
$\tau(\cdot_1,\cdot_2) \otimes \tau(\cdot_3,\cdot_4)$
on $(Y \times \tilde Y)^2$.
Similarly to gm-spaces,
the equivalence class of $f$ is denoted by 
$\ff \coloneqq \llbracket f \rrbracket$.
Because any tangent representative $f \in \ff$ depends
on the space representative 
$\YY \coloneqq (Y,h,\upsilon)$ on which it is defined, 
we introduce the notation $f \rest_{\YY}$
whenever we want to emphasize this dependence.
The space $\T_{\Yf}$ becomes a complete metric space 
with the tangent metric:
\begin{equation}
    \label{eq:tan-met}
    d_{\T_{\Yf}}
    (\ff, \tilde \ff) 
    \coloneqq
    \inf \bigl\{
    \|f(\cdot_1,\cdot_3) - \tilde f(\cdot_2, \cdot_4)\|
    _{L^2_\sym(\tau(\cdot_1,\cdot_2) \otimes \tau(\cdot_3,\cdot_4))} 
    :
    \tau \in \Pio(\YY,\tilde{\YY})
    \bigr\}
\end{equation}
for any $f \rest_{\YY} \in \ff$
and $\tilde f \rest_{\tilde \YY} \in \tilde\ff$.
Note that the tangent metric is independent of 
the chosen representatives.
The infimum in \cref{eq:tan-met}
is always attained. 
Indeed, the objective can
be written as $\smash{\Fgw^{\ZZ,\tilde{\ZZ}}}$
with $\ZZ = (Y,f,\upsilon)$, 
$\tilde{\ZZ} = (\tilde Y,\tilde{f},\tilde \upsilon)$.
Leveraging a similar argument as in \cref{thm:min-exists}, 
the minimizer can be constructed as a weak limit
of elements in $\Pio(\YY,\tilde{\YY})$.
Using \cite[Lem.~5.5]{sturm2020space},
we can show that this limit 
is element of $\Pio(\YY,\tilde{\YY})$ as well,
cf.\ the proof of \cref{prop:sim_eq_continuous}.

The \emph{exponential map} 
$\Exp_{\Yf} \colon \T_{\Yf} \to \GM$
is defined by
\begin{equation*}
    \Exp_{\Yf} (\ff) 
    \coloneqq 
    \llbracket (Y,h + f, \upsilon) \rrbracket
    \quad\text{for any}\quad
    f\rest_{(Y,h,\upsilon)} \in \ff.
\end{equation*}
Conversely, the \emph{logarithmic map} $\Log_{\Yf}$
mapping $\GM$ into a subset of $\T_{\Yf}$ is given by
\begin{equation*}
    \Log_{\Yf}(\Xf)
    \coloneqq
    \bigl\{
    \llbracket (g-h) \rest_{(Y \times X, h, \pi)} \rrbracket
    :
    \YY \coloneqq (Y, h, \upsilon) \in \Yf,
    \XX \coloneqq (X,g,\xi) \in \Xf,
    \pi \in \Pio(\YY,\XX)
    \bigr\}.
\end{equation*}
The transition from one representative to another representative 
in the definition of the logarithmic map
can be described using optimal meltings.

\begin{lemma}\label{lem:equivalence_of_tangents_melting}
    Let $\XX \coloneqq (X, g, \xi)$, 
    $\YY \coloneqq (Y, h, \upsilon)$,
    and $\tilde\YY \coloneqq (\tilde Y, \tilde h, \tilde \upsilon)$
    be gm-spaces with $\YY \simeq \tilde\YY$.
    If $\pi \in \Pi(\YY,\XX)$
    and $\sigma \in \Pio(\YY,\tilde\YY)$,
    then
    \begin{equation*}
        (g - h)\rest_{(Y \times X, h, \pi)} 
        \simeq 
        (g - \tilde{h})\rest_{(\tilde{Y} \times X, \tilde h, \tilde{\pi})}
        \quad\text{for any}\quad
        \tilde{\pi} \in \Melt_\YY(\sigma,\pi).
    \end{equation*}
\end{lemma}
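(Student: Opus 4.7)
The plan is to produce an explicit optimal transport plan between the gm-spaces $(Y\times X, h, \pi)$ and $(\tilde Y \times X, \tilde h, \tilde\pi)$ on which the two tangent representatives agree a.e., by lifting the gluing that witnesses the melting $\tilde\pi \in \Melt_\YY(\sigma,\pi)$. Concretely, I fix $\gamma \in \Glue_\YY(\sigma,\pi)\subset\pmeas(Y\times\tilde Y\times X)$ with $(P_{Y\times\tilde Y})_\#\gamma=\sigma$, $(P_{Y\times X})_\#\gamma=\pi$, and $(P_{\tilde Y\times X})_\#\gamma=\tilde\pi$, and set
\begin{equation*}
    \tau \coloneqq T_\#\gamma
    \quad\text{where}\quad
    T\colon(y,\tilde y,x)\mapsto\bigl((y,x),(\tilde y,x)\bigr).
\end{equation*}
The key feature of $T$ is that it duplicates the $X$-coordinate, so the two copies of $X$ in $(Y\times X)\times(\tilde Y\times X)$ are identified under $\tau$.

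First I would check that $\tau$ is a transport plan between $(Y\times X, h, \pi)$ and $(\tilde Y\times X, \tilde h, \tilde\pi)$: the first marginal is $(P_{Y\times X})_\#\gamma = \pi$ and the second is $(P_{\tilde Y\times X})_\#\gamma = \tilde\pi$, by the defining properties of $\gamma$. Next, to show optimality of $\tau$ and the a.e.\ agreement, I would use that $\YY\simeq\tilde\YY$ with $\sigma\in\Pio(\YY,\tilde\YY)$ forces $\Fgw^{\YY,\tilde\YY}(\sigma)=0$, so
\begin{equation*}
h(y,y') = \tilde h(\tilde y,\tilde y')
\quad\text{$\sigma(\cdot_1,\cdot_2)\otimes\sigma(\cdot_3,\cdot_4)$-a.e.}
\end{equation*}
Since $(P_{Y\times\tilde Y}\times P_{Y\times\tilde Y})_\#(\gamma\otimes\gamma)=\sigma\otimes\sigma$, this identity holds $\gamma\otimes\gamma$-a.e.\ and, after applying $T\times T$, $\tau\otimes\tau$-a.e.

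With this in hand, writing out
\begin{equation*}
    (g-h)\bigl((y,x),(y',x')\bigr) - (g-\tilde h)\bigl((\tilde y,x),(\tilde y',x')\bigr)
    = \tilde h(\tilde y,\tilde y') - h(y,y')
\end{equation*}
on the support of $\tau\otimes\tau$ (where the $x$-coordinates on both sides match by construction of $T$), the right-hand side vanishes $\tau\otimes\tau$-a.e., which is precisely the equivalence relation required of the tangent representatives. The same computation with $g-h$ replaced by $h$ and $g-\tilde h$ by $\tilde h$ shows $\smash{\Fgw^{(Y\times X,h,\pi),(\tilde Y\times X,\tilde h,\tilde\pi)}(\tau)=0}$, so $\tau$ attains the GW distance between the two underlying gm-spaces, hence $\tau\in\Pio((Y\times X,h,\pi),(\tilde Y\times X,\tilde h,\tilde\pi))$.

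The main obstacle is purely bookkeeping: one must ensure that the a.e.\ identity coming from $\Fgw^{\YY,\tilde\YY}(\sigma)=0$ is transported through the successive pushforwards $\sigma\leftarrow\gamma\to\tau$ without accumulating null-set issues, and that $T$ is chosen so that both the $X$-components in $\tau$ are identified (which is what makes the cross-space $g$-terms cancel). Once the candidate $\tau$ above is written down, the gluing structure provided by $\gamma$ makes both requirements — being a transport plan with the correct marginals and being optimal — follow from the single observation $\Fgw^{\YY,\tilde\YY}(\sigma)=0$.
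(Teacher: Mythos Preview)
Your proposal is correct and follows essentially the same route as the paper: both construct the candidate plan as the pushforward of the gluing $\gamma\in\Glue_\YY(\sigma,\pi)$ by the map $(y,\tilde y,x)\mapsto((y,x),(\tilde y,x))$ that duplicates the $X$-coordinate, and then use $\Fgw^{\YY,\tilde\YY}(\sigma)=0$ to conclude both optimality of $\tau$ and the a.e.\ agreement of the tangent representatives. Your write-up is in fact more explicit than the paper's, which simply asserts that ``by the push-forward construction, $\tau$ is optimal'' and that ``a short computation'' yields the a.e.\ identity.
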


\begin{proof}
    On the basis of
    the underlying gluing $\gamma \in \Gamma_\YY(\sigma,\pi)$
    of the chosen $\tilde{\pi} \in \Melt_\YY(\sigma,\pi)$,
    we consider $\tau \coloneqq (P_Y,P_X,P_{\tilde{Y}},P_X)_\# \gamma$.
    By the push-forward construction,
    $\tau$ is optimal in the sense of
    \[
    \tau \in 
    \Pio((Y \times X, h, \pi),(\tilde{Y} \times X, \tilde{h}, \tilde{\pi})).
    \]
    Moreover,
    a short computation shows
    $
    (g-h)(\cdot_1, \cdot_3) 
    = (g- \tilde h)(\cdot_2, \cdot_4)$
    almost everywhere with respect to
    $\tau(\cdot_1,\cdot_2) \otimes \tau(\cdot_3,\cdot_4)$
    on $(Y \times X \times \tilde Y \times X)^2$
    yielding the desired equivalence.
\end{proof}

If $\pi \in \Pio(\YY,\XX)$,
and thus $\Melt_\YY(\sigma,\pi) \subset \Pio(\tilde\YY,\XX)$
by \cref{lem:melting_remains_optimal},
then \cref{lem:equivalence_of_tangents_melting}
allows to switch between different representatives of $\Yf$  
in the definition of the logarithmic map.
Similarly,
we can transfer the logarithmic map
between different representations of $\Xf$.

\section{Free-Support GW Barycenters}\label{sec:5}

Barycenters are general Fréchet means on arbitrary metric spaces. 
For the GW setting,
let $\Xf_1,\dots,\Xf_N$ be given gm-spaces, 
$\rho \in \Delta_{N-1}$ be given weights.
An associated \emph{GW barycenter} is any solution to
\begin{equation}
    \label{eq:free-supp}
    \GWB_\rho(\Xf_1,\dots,\Xf_N)
    \coloneqq
    \inf_{\Yf \in \GM} \Fgwb^{\Xf_1,\dots,\Xf_N} (\Yf)
\end{equation}
with the GW barycenter loss
\begin{equation}
    \label{eq:gwb-loss}
     \Fgwb^{\Xf_1,\dots,\Xf_N} (\Yf)
     \coloneqq
     \sum_{i=1}^N \rho_i \GW_2^2(\Xf_i,\Yf).
\end{equation}
Since the infimum in \cref{eq:free-supp} is taken
over arbitrary gm-spaces without restriction to their domain,
the minimizer---%
whenever it exists---%
is called a \emph{free-support GW barycenter}.
The functional \cref{eq:gwb-loss} is continuous
with respect to the topology 
induced by the GW distance on $\GM$.
For the special case of mm-spaces
with compact support,
there exists at least one barycenter,
and every barycenter can be characterized
as solution of an appropriate MGW problem
\cite[Thm~5.1]{BBS2022multi}.
This characterization carries over to 
the more general setting of gm-spaces,
where the proof coincides line by line.
For the sake of completeness,
the detailed proof is given in \cref{sec:proof-char-bary}.

\begin{theorem}
    \label{thm:gen-bary}
    Let $\Xf_i \coloneqq \llbracket(X_i,g_i,\xi_i)\rrbracket$,
    $i = 1,\dotsc,N$,
    and $\rho \in \Delta_{N-1}$.
    There exists $\Yf \in \GM$ minimizing \cref{eq:free-supp}.
    Moreover,
    the homomorphic classes solving $\GWB_\rho(\Xf_1,\dots,\Xf_N)$
    are given by
    \begin{equation*}
        \bigl\{
            \bigl\llbracket X_\times, \sum\nolimits_{i=1}^N \rho_i g_i, \hat{\pi}
            \bigr\rrbracket 
            :
            \hat{\pi} \in \Pio^\rho(\XX_1,\dotsc,\XX_N)
        \bigr\}.
    \end{equation*}
\end{theorem}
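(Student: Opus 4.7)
The plan is to prove the key identity $\MGW_\rho(\XX_1,\dots,\XX_N) = \GWB_\rho(\Xf_1,\dots,\Xf_N)$ and to extract both existence and the characterization from the equality cases. Since MGW-optimal plans exist by \cref{thm:min-exists}, once equality is established any $\hat\pi \in \Pio^\rho(\XX_1,\dots,\XX_N)$ automatically yields an explicit barycenter $(X_\times, \sum_i \rho_i g_i, \hat\pi)$. The engine throughout is the pointwise variance identity
\begin{equation*}
\tfrac{1}{2}\sum_{i,j}\rho_i\rho_j(a_i - a_j)^2 = \sum_k \rho_k \Bigl(a_k - \sum\nolimits_i \rho_i a_i\Bigr)^2 \le \sum_k \rho_k (a_k - b)^2,
\end{equation*}
valid for all $a_1,\dots,a_N,b \in \R$, which I would apply pointwise with $a_k = g_k(x_k,x'_k)$ and $b = h(y,y')$.

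For the upper bound $\GWB_\rho \le \MGW_\rho$, I would start from an MGW-optimal $\hat\pi$ and test the candidate $\YY_{\hat\pi} \coloneqq (X_\times, \sum_i \rho_i g_i, \hat\pi)$ against each $\XX_k$ with the plan $\tau_k \coloneqq (P_{X_k} \times \mathrm{id}_{X_\times})_\# \hat\pi \in \Pi(\XX_k, \YY_{\hat\pi})$. Summing $\rho_k \GW_2^2(\XX_k,\YY_{\hat\pi}) \le \rho_k (\Fgw^{\XX_k,\YY_{\hat\pi}}(\tau_k))^2$ and using the equality case of the variance identity collapses the right-hand side to $\Fmgw^{\XX_1,\dots,\XX_N}(\hat\pi) = \MGW_\rho$, giving the desired bound.

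For the converse and the characterization, I would fix any $\YY = (Y,h,\upsilon) \in \Yf$ with optimal $\pi_i \in \Pio(\XX_i,\YY)$ and iteratively glue along the common marginal $\YY$ via Dudley's lemma to obtain $\gamma \in \pmeas(X_\times \times Y)$ with bi-marginals $\pi_i$. Setting $\hat\pi \coloneqq (P_{X_\times})_\# \gamma \in \Pi(\XX_1,\dots,\XX_N)$ and integrating the variance inequality against $\gamma \otimes \gamma$ yields $\Fmgw(\hat\pi) \le \sum_i \rho_i (\Fgw^{\XX_i,\YY}(\pi_i))^2 = \Fgwb^{\Xf_1,\dots,\Xf_N}(\Yf)$; taking the infimum closes the chain and, combined with existence of MGW minimizers, proves existence. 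When $\Yf$ is already a barycenter the entire chain is tight, so the Jensen slack $(\bar g - h)^2$ with $\bar g \coloneqq \sum_i\rho_i g_i$ must vanish $\gamma\otimes\gamma$-almost everywhere. The intermediate gm-space $(X_\times \times Y, P_{X_\times}^\# \bar g, \gamma)$ then agrees with $(X_\times \times Y, P_Y^\# h, \gamma)$ in $\gauges(X_\times \times Y, \gamma)$ and projects via $P_{X_\times}$ and $P_Y$ to $(X_\times,\bar g,\hat\pi)$ and to $\YY$ respectively, giving $\Yf = \llbracket(X_\times, \bar g, \hat\pi)\rrbracket$ with $\hat\pi$ MGW-optimal.

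The hardest step is this final identification: the equality $P_{X_\times}^\# \bar g = P_Y^\# h$ holds only $\gamma\otimes\gamma$-almost everywhere, so the two functions must be treated as equivalent gauges in $\gauges(X_\times\times Y,\gamma)$ rather than as literally equal Borel functions, and homomorphy has to be read off from the common-refinement characterization \cite[Prop~5.6]{sturm2020space}. This measure-theoretic bookkeeping, together with the iterated application of Dudley's lemma to produce the $N$-fold gluing $\gamma$, is the only place where the extension from the compact mm-setting of \cite{BBS2022multi} to general gm-spaces requires non-trivial care.
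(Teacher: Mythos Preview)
Your proposal is correct and follows essentially the same route as the paper: both arguments hinge on the pointwise variance identity, glue the optimal bi-marginal plans along $\YY$ to obtain the lower bound $\MGW_\rho \le \Fgwb(\Yf)$, test $(X_\times, m_\rho, \hat\pi)$ with the plans $(P_{X_k},\mathrm{id})_\#\hat\pi$ for the matching upper bound, and then read off from the tightness of the chain that any barycenter satisfies $h = m_\rho$ $\gamma\otimes\gamma$-a.e.\ with $(P_{X_\times})_\#\gamma$ MGW-optimal. The only cosmetic difference is the final identification: you invoke the common-refinement characterization \cite[Prop~5.6]{sturm2020space}, whereas the paper simply observes that $\tilde\gamma \in \Pi(\tilde\YY,(X_\times,m_\rho,\hat\pi))$ and bounds $\GW$ by the integral of $|\tilde h - m_\rho|^2$, which vanishes.
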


From a numerical perspective,
the main issue of the free-support barycenter characterization
in \cref{thm:gen-bary}
is the required optimal multi-marginal transport plan,
whose computation for $N \geq 3$ is in general intractable. 
However, 
for Gaussian gm-spaces 
whose gauges are given by the standard scalar product, 
there exists a multi-marginal plan and GW barycenter 
with closed forms 
as the following result shows.
The theorem makes use of the following notation:
we denote the upper left 
$d \times d$ submatrix of any matrix $B$ 
with $B^{(d)}$ 
and the diagonal sign matrices by
\begin{equation*}
    \mathcal{D}_{d} \coloneqq \{\diag(s_1,\dotsc,s_{d}) : s_{k} \in \{-1,1\}\}.
\end{equation*}
The proof is provided in \cref{sec:proof-gaussian}.
\begin{theorem}\label{thm:gaussian}
    Let $d_1 \geq \dotsc \geq d_N \in \NN$.
    Consider the gm-spaces
    $\XX_i = 
    (\R^{d_i},\langle \cdot, \cdot \rangle_{d_i},\xi_i)$, 
    where $\langle \cdot, \cdot \rangle_{d_i}$ 
    is the standard scalar product on $\R^{d_i}$ 
    and $\xi_i = \N(0,\Sigma_i)$ 
    are centered non-degenerate Gaussian distributions 
    on $\R^{d_i}$, $i=1,\dotsc,N$.
    Furthermore, 
    let the covariance matrices $\Sigma_i$ 
    be diagonalized by
    $\Sigma_i = P_i D_i P_i^{\tT}$ 
    where the eigenvalues
    are sorted
    in decreasing order.
    For fixed
    $\tilde{I}_i \in \mathcal{D}_{d_i}$,
    let
    $T_i: \R^{d_1} \to \R^{d_i}$
    be defined by
    \begin{equation}\label{eq:T_gaussian}
    T_i(x)
    = P_i A_i P_1^\tT x, 
    \quad A_i \coloneqq
    \Bigl(
        \tilde{I}_{d_i} 
        D_i^{\nicefrac{1}{2}} 
        \Bigl(D_1^{(d_i)}\Bigr)^{-\nicefrac{1}{2}} 
        \Bigm| 0_{d_i, d_1 - d_i}
    \Bigr)
    \in \R^{d_i \times d_1}.
    \end{equation}
    For all $\rho \in \Delta_{N-1}$, it holds
    \[
    (T_1,\dotsc,T_N)_\# \xi_1 
    \in \Pio^\rho(\XX_1,\dotsc,\XX_N),
    \]
    and
    \begin{equation}\label{eq:gauss_bary}
    \YY \coloneqq (\R^{d_1}, \langle \cdot, \cdot \rangle_{d_1}, \hat{\upsilon}),
    \quad 
    \hat{\upsilon} \coloneqq \N\biggl(0,
    \sum_{i=1}^N \rho_i 
    \underbrace{
    \begin{pmatrix}
        D_i & 0\\
        0 & 0
    \end{pmatrix}
    }_{\in \R^{d_1}}
    \biggr)
    \end{equation}
    solves $\GWB_{\rho}(\llbracket \XX_1 \rrbracket ,\dotsc, \llbracket \XX_N \rrbracket)$.
\end{theorem}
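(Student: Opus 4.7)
The plan is to apply \cref{thm:gen-bary}, which characterizes GW barycenters as the homomorphism classes $\llbracket X_\times, \sum_{i=1}^N \rho_i g_i, \hat\pi \rrbracket$ for $\hat\pi \in \Pio^\rho(\XX_1,\dotsc,\XX_N)$. Thus it suffices to show that the candidate $\hat\pi \coloneqq (T_1,\dotsc,T_N)_\# \xi_1$ lies in $\Pio^\rho(\XX_1,\dotsc,\XX_N)$ and then identify the resulting representative as homomorphic to $\YY$.

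First, I would check that each $T_i$ is a transport map from $\xi_1$ to $\xi_i$. Since $T_i$ is linear and $\xi_1$ is centered Gaussian, $(T_i)_\# \xi_1 = \N(0, P_i A_i D_1 A_i^\tT P_i^\tT)$, and the block structure of $A_i$ together with $\tilde{I}_{d_i}^2 = I$ collapses the middle factor to $A_i D_1 A_i^\tT = \tilde{I}_{d_i} D_i \tilde{I}_{d_i} = D_i$. Hence $(T_i)_\# \xi_1 = \xi_i$ and $\hat\pi \in \Pi(\XX_1,\dotsc,\XX_N)$. To show optimality, I would decompose the MGW functional into bi-marginals by swapping sum and integral in \cref{eq:mgw-fun}:
\[
\Fmgw^{\XX_1,\dotsc,\XX_N}(\pi) = \tfrac{1}{2}\sum_{i,j=1}^N \rho_i\rho_j\, \bigl(\Fgw^{\XX_i,\XX_j}(\pi_{ij})\bigr)^2, \quad \pi_{ij} \coloneqq (P_{X_i\times X_j})_\# \pi.
\]
Since $\Fgw^{\XX_i,\XX_j}(\pi_{ij}) \geq \GW_2(\XX_i,\XX_j)$ for every $\pi$, this yields the pointwise lower bound $\Fmgw(\pi) \geq \tfrac{1}{2}\sum_{i,j}\rho_i\rho_j \GW_2^2(\XX_i,\XX_j)$, with equality if and only if every bi-marginal is GW-optimal.

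The main technical obstacle is to verify that $\hat\pi_{ij} = (T_i, T_j)_\# \xi_1$ is GW-optimal between the Gaussian spaces $\XX_i$ and $\XX_j$ equipped with their scalar-product gauges. The bi-marginal is itself a Gaussian coupling (as a linear push-forward of a Gaussian) and is of the form prescribed by the known theory of Gaussian GW transport. I expect this step can be settled by a direct calculation: expanding the squared GW functional gives $\Fgw^{\XX_i,\XX_j}(\pi_{ij})^2 = \tr(\Sigma_i^2) + \tr(\Sigma_j^2) - 2\|C_{ij}\|_F^2$ for any coupling with cross-covariance $C_{ij}$, so GW-optimality of $\hat\pi_{ij}$ is equivalent to maximality of $\|C_{ij}\|_F^2$. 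The positive semidefiniteness of the joint covariance then yields the sharp rearrangement-type bound $\|C_{ij}\|_F^2 \leq \sum_{k=1}^{\min(d_i,d_j)} (D_i)_{kk}(D_j)_{kk}$, whose attainment by $\hat\pi_{ij}$ follows by a direct computation of the diagonal form of $(T_i, T_j)_\# \xi_1$ in the eigenbases $P_i, P_j$.

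With $\hat\pi \in \Pio^\rho$ established, \cref{thm:gen-bary} produces the barycenter class $\llbracket X_\times, \sum_i \rho_i g_i, \hat\pi \rrbracket$. Using the parametrization $T = (T_1,\dotsc,T_N) \colon \R^{d_1} \to X_\times$, this class coincides with $\llbracket \R^{d_1}, T^\#(\sum_i \rho_i g_i), \xi_1 \rrbracket$. A short calculation of $T_i^\tT T_i = P_1 B_i P_1^\tT$, where $B_i$ is the diagonal matrix with entries $(B_i)_{kk} = (D_i)_{kk}/(D_1)_{kk}$ for $k \leq d_i$ and $0$ otherwise, shows that $T^\#(\sum_i \rho_i g_i)(x,x') = x^\tT P_1 \Lambda P_1^\tT x'$ with $\Lambda_{kk} = \sum_{i : d_i \geq k} \rho_i (D_i)_{kk}/(D_1)_{kk}$. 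I would then define $\Psi(x) \coloneqq \Lambda^{1/2} P_1^\tT x$; by construction $\Psi^\tT \Psi = P_1 \Lambda P_1^\tT$, so $\Psi^\# \langle\cdot,\cdot\rangle_{d_1}$ reproduces the pull-back gauge, while $\Psi_\# \xi_1 = \N(0, \Lambda D_1)$ has diagonal entries $(\Lambda D_1)_{kk} = \sum_{i : d_i \geq k} \rho_i (D_i)_{kk}$, matching the covariance of $\hat\upsilon$ given in \cref{eq:gauss_bary}. The map $\Psi$ thus exhibits the required homomorphism, so $\YY$ represents the barycenter class.
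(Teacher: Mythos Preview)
Your overall strategy---decompose $\Fmgw$ into bi-marginal GW terms, show each $(T_i,T_j)_\#\xi_1$ is GW-optimal, then invoke \cref{thm:gen-bary} and identify the resulting class with $\YY$---matches the paper exactly, as does your barycenter identification via the diagonal matrix $\Lambda$ (which is the paper's $M=\sum_i\rho_i A_i^\tT A_i$ after the reduction $P_i=I$). The genuine difference is in how bi-marginal optimality is established. The paper never computes $\|C_{ij}\|_F^2$; instead it exhibits, for each pair $i\le j$, a linear map $S\colon\R^{d_i}\to\R^{d_j}$ of the same structural form as the $T_k$'s satisfying $S\circ T_i=T_j$, so that $(T_i,T_j)_\#\xi_1=(\id_{\R^{d_i}},S)_\#\xi_i$ is optimal by a direct appeal to the two-marginal Gaussian result \cite[Prop.~4.1]{delon2022gromov}. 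Your route---expanding the functional as $\tr(\Sigma_i^2)+\tr(\Sigma_j^2)-2\|C_{ij}\|_F^2$ and maximizing the cross-covariance Frobenius norm under the joint-PSD constraint---is more self-contained but effectively reproves that cited proposition from scratch; the step you label a ``rearrangement-type bound'' is exactly where all the work sits (it requires the contraction factorization $C=\Sigma_i^{1/2}K\Sigma_j^{1/2}$, $\|K\|\le 1$, and then a von Neumann trace inequality), so if you keep this route you should spell it out rather than assert it. The paper's factorization trick is shorter when one is willing to import the external two-marginal result; your approach is preferable if a fully self-contained argument is desired.
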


The previous result generalizes both,
\cite[Prop~4.1]{delon2022gromov} 
from the bi-marginal
to the multi-marginal case,
as well as
\cite[Thm.~3]{le2021entropic},
which states that \cref{eq:gauss_bary} is a solution to
a restricted GW barycenter problem, 
where the gauge of the barycenter is a-priori fixed
to be the inner product on Euclidean space.

As mentioned,
in general the computation of optimal
multi-marginal plans is intractable.
For this reason,
we adopt a known procedure to find Fréchet means on manifolds.
The main idea is as follows:
fix a reference point $\Yf \in \GM$ 
and lift $\Xf_1,\dotsc,\Xf_N$ 
into the tangent space $\T_\Yf$ 
via the logarithmic map,
determine a barycenter 
of the lifted gm-spaces in $\T_\Yf$,
project the barycenter back to $\GM$
via the exponential map,
and update $\Yf$.
Iterate the procedure until convergence.
In the following, 
we explore this tangential barycenter procedure in more detail.
Selecting representatives 
$\XX_i = (X_i,g_i,\xi_i) \in \Xf_i$
and $\YY = (Y,h,\upsilon) \in \Yf$, 
fixing optimal plans
$\pi_i \in \Pio(\YY,\XX_i)$, 
and 
defining $\YY_i \coloneqq (Y \times X_i, h, \pi_i)$,
we consider the liftings
\begin{equation}
    \label{eq:lift-Xi}
    \gf_i 
    \coloneqq
    \llbracket (g_i - h)\rest_{\YY_i} \rrbracket 
    \in \Log_{\Yf}(\Xf_i).
\end{equation}
The \emph{tangential barycenter} 
between $\gf_1,\dots, \gf_N$
with respect to 
barycentric coordinates $\rho \in \Delta_{N-1}$
is then the solution to
\begin{equation}
    \label{eq:tan_bary}
    \GWTB_\rho^\Yf(\gf_1,\dots,\gf_N)
    \coloneqq
    \inf_{\ff \in \T_\Yf} 
    \F_{\GWTB^\Yf_\rho}^{\gf_1,\dots,\gf_N} (\ff)
\end{equation}
with the GW tangential barycenter loss
\begin{equation}
    \label{eq:tan-bary-loss}
    \F_{\GWTB^\Yf_\rho}^{\gf_1,\dots,\gf_N} (\ff)
    \coloneqq
    \sum_{i=1}^N \rho_i \, d_{\T_\Yf}^2(\gf_i, \ff).
\end{equation}
For the analysis of the tangential barycenter, 
we define the \emph{set of (multi-marginal) gluings}
between $\pi_i \in \Pi(\YY,\XX_i)$ along $\YY$ by
\begin{equation*}
    \Glue_\YY(\pi_1,\dotsc,\pi_N) 
    \coloneqq
    \Glue_\YY^{\XX_1,\dots,\XX_N}(\pi_1,\dotsc,\pi_N)
    \coloneqq
    \bigl\{
    \gamma \in \p(Y \times X_\times) 
    : 
    (P_{Y \times X_i})_\# \gamma = \pi_i
    \bigr\}
\end{equation*}
and the \emph{set of (multi-marginal) meltings} along $\YY$ by
\begin{equation*}
    \Melt_\YY(\pi_1,\dotsc,\pi_N) 
    \coloneqq
    \Melt_\YY^{\XX_1,\dots,\XX_N}(\pi_1,\dotsc,\pi_N) 
    \coloneqq
    \bigl\{
    (P_{X_\times})_\# \gamma 
    : 
    \gamma \in \Gamma_{\YY}(\pi_1,\dotsc,\pi_N)
    \bigr\}.
\end{equation*}
Similarly as above,
we extend these definitions to arbitrary sets of plans
by taking the union of the elementwise gluings and meltings.
For the sake of brevity,
we rely on the following notation
$\YYtimes[\gamma] \coloneqq (Y \times X_\times, h, \gamma) \in \Yf$ 
for any $\gamma \in \p(Y \times X_\times)$
with $(P_Y)_\# \gamma = \upsilon$
as well as on the mean gauge 
$m_\rho(x_\times,x_\times') \coloneqq \sum_{i=1}^N \rho_i g_i(x_i,x_i')$
which is defined on $X_\times$.

\begin{theorem}\label{thm:tan_min}
    The tangential barycenter problem \cref{eq:tan_bary} admits a solution. 
    Moreover, 
    there exist plans $\pi_i^* \simeq_\YY \pi_i$ 
    so that  
    $\mf_\rho \coloneqq \llbracket (m_\rho - h)\rest_{\YYtimes(\gamma^*)} \rrbracket$ 
    is a tangential barycenter for some gluing 
    $\gamma^* \in \Gamma_\YY(\pi^*_1,\dotsc,\pi^*_N)$.
\end{theorem}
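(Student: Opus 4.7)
The strategy reduces the tangential barycenter problem to the multi-marginal Gromov–Wasserstein problem of \cref{sec:3}, using two ingredients: (i) every $\tau \in \Pio(\YY_i, \tilde\YY)$ forces the gauge identity $h(y, y') = \tilde h(\tilde y, \tilde y')$ $\tau \otimes \tau$-almost everywhere, since $\YY_i \simeq \tilde \YY$ makes the integral in the GW functional vanish; and (ii) the real-variable variance identity $\sum_i \rho_i (a_i - b)^2 = \sum_i \rho_i (a_i - \bar a)^2 + (\bar a - b)^2$ with $\bar a = \sum_i \rho_i a_i$. Write $L(\ff)$ for the tangential barycenter loss $\F_{\GWTB^\Yf_\rho}^{\gf_1,\dotsc,\gf_N}(\ff)$.

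\textbf{Upper bound.} For any gluing $\gamma \in \Gamma_\YY(\pi_1^*, \dotsc, \pi_N^*)$ with $\pi_i^* \in \Pio(\YY, \XX_i)$ (which exists by Dudley's lemma), the space $\YYtimes(\gamma) \in \Yf$ carries the square-integrable symmetric gauge $(y, x_\times, y', x'_\times) \mapsto m_\rho(x_\times, x'_\times) - h(y, y')$. Set $\mf_\rho^\gamma \coloneqq \llbracket(m_\rho - h)\rest_{\YYtimes(\gamma)}\rrbracket$. The diagonal plan $\tau_i \coloneqq (P_Y, P_{X_i}, \mathrm{id})_\#\gamma$ lies in $\Pio((Y \times X_i, h, \pi_i^*), \YYtimes(\gamma))$ since both gauges reduce to $h$ on matching $Y$-components; assuming $\pi_i^* \simeq_\YY \pi_i$, \cref{lem:equivalence_of_tangents_melting} lets us use $\pi_i^*$ in place of $\pi_i$ as representative of $\gf_i$, giving
\[
d^2_{\T_\Yf}(\gf_i, \mf_\rho^\gamma) \leq \iint_{(Y \times X_\times)^2} |g_i(x_i, x'_i) - m_\rho(x_\times, x'_\times)|^2 \dx\gamma \dx\gamma.
\]
Summing with weights $\rho_i$ and using $\sum_i \rho_i (g_i - m_\rho)^2 = \tfrac{1}{2}\sum_{i,j}\rho_i\rho_j(g_i - g_j)^2$ yields $L(\mf_\rho^\gamma) \leq \Fmgw^{\XX_1,\dotsc,\XX_N}((P_{X_\times})_\#\gamma)$.

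\textbf{Lower bound and conclusion.} For arbitrary $\ff \in \T_\Yf$ with representative $f\rest_{\tilde\YY}$ and optimal $\tau_i \in \Pio(\YY_i, \tilde\YY)$, ingredient (i) recasts the integrand as $|g_i(x_i, x'_i) - \phi(\tilde y, \tilde y')|^2$ with $\phi \coloneqq \tilde h + f$. Gluing the $\tau_i$ along the common $\tilde Y$-marginal via Dudley yields $\kappa \in \pmeas(Y^N \times X_\times \times \tilde Y)$ with $(P_{Y_i}, P_{X_i}, P_{\tilde Y})_\# \kappa = \tau_i$; applying (ii) pointwise to the sum, integrating against $\kappa \otimes \kappa$, and dropping the non-negative residual $(m_\rho - \phi)^2$ produces
\[
L(\ff) \geq \iint \sum_i \rho_i |g_i - m_\rho|^2 \dx\mu_\ff \dx\mu_\ff = \Fmgw^{\XX_1,\dotsc,\XX_N}(\mu_\ff) \geq \MGW_\rho(\XX_1,\dotsc,\XX_N),
\]
where $\mu_\ff \coloneqq (P_{X_\times})_\# \kappa \in \Pi(\XX_1,\dotsc,\XX_N)$ inherits the $\xi_i$-marginals from the $\tau_i$. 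By \cref{thm:min-exists}, the last infimum is attained at some $\mu^* \in \Pio^\rho(\XX_1,\dotsc,\XX_N)$. If $\mu^*$ can be lifted to a gluing $\gamma^* \in \pmeas(Y \times X_\times)$ with $(P_{X_\times})_\#\gamma^* = \mu^*$ and pair marginals $\pi_i^* \coloneqq (P_Y, P_{X_i})_\#\gamma^* \simeq_\YY \pi_i$, then combining the bounds forces $L(\mf_\rho^{\gamma^*}) \leq \Fmgw(\mu^*) = \MGW_\rho \leq L(\ff)$ for every $\ff$, so that $\mf_\rho^{\gamma^*}$ is a tangential barycenter of the stated form.

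\textbf{Main obstacle.} The nontrivial technical step is precisely this lifting of the MGW-optimal $\mu^*$. A natural route starts from an arbitrary $\gamma_0 \in \Gamma_\YY(\pi_1, \dotsc, \pi_N)$ (Dudley again) with $X_\times$-marginal $\mu_0$ and transports $\mu_0$ to $\mu^*$ through a coupling realized by meltings with optimal $\YY$-self-couplings in $\Pio(\YY, \YY)$. The delicate point is that this transport must simultaneously adjust all $N$ pair marginals while keeping each inside its $\simeq_\YY$-class; I expect \cref{prop:sim_eq_continuous} to play a key role, either in passing to a weak limit of approximate constructions or in establishing closedness of the admissible set of gluings under weak convergence.
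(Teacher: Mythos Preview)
Your sandwich $L(\ff) \ge \Fmgw(\mu_\ff)$ and $L(\mf_\rho^\gamma) \le \Fmgw((P_{X_\times})_\#\gamma)$ is correct and is precisely the content of the paper's \cref{lem:tan-estimate} and \cref{cor:tan-estimate}. The gap is in how you close the sandwich. By passing to the unconstrained lower bound $\Fmgw(\mu_\ff) \ge \MGW_\rho$ and then seeking to lift an MGW-optimal $\mu^*$ back to an admissible gluing, you are implicitly asserting the identity $\GWTB_\rho^\Yf(\gf_1,\dotsc,\gf_N) = \MGW_\rho(\XX_1,\dotsc,\XX_N)$ --- a statement stronger than the theorem and one the paper never makes. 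The admissible melting set $S \coloneqq \bigcup_{\pi_i^* \simeq_\YY \pi_i}\Melt_\YY(\pi_1^*,\dotsc,\pi_N^*)$ is in general a strict subset of $\Pi(\XX_1,\dotsc,\XX_N)$: for instance, when $\Pio(\YY,\YY)=\{(\id,\id)_\#\upsilon\}$ and every $\pi_i$ is deterministic, $S$ is a singleton. There is no a~priori reason for a given MGW minimizer to lie in $S$, and your suggested transport of $\mu_0$ to $\mu^*$ via $\Pio(\YY,\YY)$-meltings cannot bridge this --- such meltings move only \emph{within} $S$.

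The paper sidesteps the lifting entirely. It takes a minimizing sequence $(\ff^{(n)})$ of the tangent-barycenter loss, replaces each term by an $\mf_\rho^{(n)}$ of the desired form via \cref{cor:tan-estimate}, passes to $[0,1]$-parametrizations so that the underlying gluings $\gamma_n$ and the plans $\tau_{i,n}$ realizing $d^2_{\T_\Yf}(\gf_i,\mf_\rho^{(n)})$ all live on compact cubes, and extracts weak limits. Then \cref{prop:sim_eq_continuous} guarantees the limit pair marginals $\pi_i^*$ remain in the $\simeq_\YY$-class of $\pi_i$, and the multi-marginal continuity result \cref{lem:multi-cont} shows the limit $\mf_\rho^{\gamma^*}$ is a genuine minimizer. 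This is exactly the closedness-under-weak-convergence you anticipated, but applied directly to the minimizing sequence rather than to an externally chosen MGW optimizer that may be inadmissible. An equivalent fix in your framework: minimize $\Fmgw$ over $S$ rather than over all of $\Pi(\XX_1,\dotsc,\XX_N)$, use \cref{prop:sim_eq_continuous} and \cref{lem:multi-cont} to show this constrained minimum is attained, and then your upper bound delivers a minimizer of $L$ of the stated form.
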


To prove the previous result, we require the following technical Lemma.

\begin{lemma}
    \label{lem:tan-estimate}
    Let $\gf_i$ be defined as in \cref{eq:lift-Xi},
    and let $\ff \in \T_\Yf$ be arbitrary.
    For every $f\rest_{\tilde\YY} \in \ff$
    defined on $\tilde \YY \coloneqq (\tilde Y, \tilde h, \tilde \upsilon) \in \Yf$,
    there exist $\tilde\pi_i \in \Melt_\YY(\Pio(\YY,\tilde\YY), \pi_i)$
    (dependent on $f$)
    so that
    \begin{equation*}
        \F_{\GWTB^\Yf_\rho}^{\gf_1,\dots,\gf_N} (\ff)
        \ge
        \F_{\GWTB^\Yf_\rho}^{\gf_1,\dots,\gf_N} (\tilde\mf)
        \quad\text{with}\quad
        \tilde \mf_\rho 
        \coloneqq 
        \llbracket (m_\rho - \tilde h)\rest_{\tilde\YY_\times[\tilde\gamma]} \rrbracket 
    \end{equation*}
    for all gluings 
    $\tilde\gamma \in \Gamma_{\tilde\YY}(\tilde \pi_1,\dotsc,\tilde \pi_N)$
    and 
    $\tilde\YY_\times[\tilde\gamma] 
    \coloneqq 
    (\tilde Y \times X_\times, \tilde h, \tilde \gamma)$.
\end{lemma}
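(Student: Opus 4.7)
The plan is to unfold the three tangent distances into $L^2$ integrals against a single big coupling on $\tilde Y \times X_\times \times Y^N$, exploit that every optimal plan between two representatives of $\Yf$ has vanishing GW cost, and close with a pointwise Pythagorean identity for the weighted mean.

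First, for each $i$ I would pick a minimizer $\tau_i^* \in \Pio(\YY_i, \tilde\YY)$ of $d_{\T_{\Yf}}(\gf_i, \ff)$ (existence was established right after the definition of the tangent metric) and set $\sigma_i \coloneqq (P_{Y \times \tilde Y})_\# \tau_i^*$ and $\tilde\pi_i \coloneqq (P_{\tilde Y \times X_i})_\# \tau_i^*$. Because $\YY_i, \YY, \tilde\YY$ all represent $\Yf$, we have $\GW(\YY,\tilde\YY) = 0$; the GW cost of $\tau_i^*$ between $\YY_i$ and $\tilde\YY$ depends only on $h$ and $\tilde h$ and therefore coincides with $\Fgw^{\YY,\tilde\YY}(\sigma_i)$, which must vanish. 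This both places $\sigma_i$ in $\Pio(\YY,\tilde\YY)$ --- giving the required $\tilde\pi_i \in \Melt_\YY(\Pio(\YY,\tilde\YY), \pi_i)$ via the gluing $\tau_i^* \in \Gamma_\YY(\sigma_i,\pi_i)$ --- and forces the base-gauge identity $h(y,y') = \tilde h(\tilde y,\tilde y')$ a.e.\ with respect to $\sigma_i(\cdot_1,\cdot_2) \otimes \sigma_i(\cdot_3,\cdot_4)$.

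Second, given an arbitrary gluing $\tilde\gamma \in \Gamma_{\tilde\YY}(\tilde\pi_1,\dots,\tilde\pi_N)$, I would disintegrate each $\tau_i^*$ over $\tilde\pi_i$ into a kernel $\lambda_i(\mathrm{d}y \mid \tilde y, x_i)$ and assemble, conditionally independently over $\tilde\gamma$,
\begin{equation*}
\nu \coloneqq \tilde\gamma(\mathrm{d}\tilde y, \mathrm{d}x_\times) \bigotimes_{i=1}^N \lambda_i(\mathrm{d}y_i \mid \tilde y, x_i).
\end{equation*}
This measure on $\tilde Y \times X_\times \times Y^N$ has $(\tilde Y, X_\times)$-marginal $\tilde\gamma$, $(Y_i, X_i, \tilde Y)$-marginal $\tau_i^*$, and thus $(Y_i, \tilde Y)$-marginal $\sigma_i$. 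Projecting $\nu$ to $(Y \times X_i) \times (\tilde Y \times X_\times)$ yields $\nu_i' \in \Pi(\YY_i, \tilde\YY_\times[\tilde\gamma])$ whose GW cost factors through $\sigma_i$ and is zero, so $\nu_i' \in \Pio(\YY_i, \tilde\YY_\times[\tilde\gamma])$ since $\GW(\YY_i, \tilde\YY_\times[\tilde\gamma]) = 0$. This bounds $d_{\T_\Yf}^2(\gf_i, \tilde\mf_\rho)$ above by the $\nu \otimes \nu$-integral of $(g_i(x_i,x_i') - h(y_i,y_i') - m_\rho(x_\times,x_\times') + \tilde h(\tilde y,\tilde y'))^2$.

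Finally, since every $\sigma_i$ has zero GW cost, the identities $h(y_i,y_i') = \tilde h(\tilde y,\tilde y')$ hold simultaneously $\nu\otimes\nu$-a.e.\ for $i=1,\dots,N$, and a convex combination with $\sum_i \rho_i = 1$ gives $\sum_i \rho_i h(y_i,y_i') = \tilde h(\tilde y,\tilde y')$ a.e. Writing $a_i \coloneqq g_i(x_i,x_i') - h(y_i,y_i')$, $b \coloneqq f(\tilde y,\tilde y')$, and $c \coloneqq m_\rho(x_\times,x_\times') - \tilde h(\tilde y,\tilde y')$, this reads $c = \sum_i \rho_i a_i$ a.e., so the pointwise Pythagorean identity $\sum_i \rho_i (a_i - b)^2 = \sum_i \rho_i (a_i - c)^2 + (c - b)^2$ applies. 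Integrating against $\nu \otimes \nu$, noting that the left-hand integrand depends only on $(y_i, x_i, \tilde y)$ and hence coincides with the $\tau_i^* \otimes \tau_i^*$-integral realising $d_{\T_\Yf}^2(\gf_i, \ff)$, and invoking the $\nu_i'$ upper bound on the right-hand side yields the claimed inequality for every gluing $\tilde\gamma$. The main obstacle is exactly this pointwise identification of the base gauges: without it the Pythagorean cross term does not vanish and the weighted mean $\sum_i \rho_i a_i$ fails to equal $c$. What saves the argument is purely tangential --- within $\Yf$ every optimal coupling is forced to have zero GW cost, so $h$ and $\tilde h$ coincide almost surely on each $\sigma_i$ and their convex combinations are harmless.
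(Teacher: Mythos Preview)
Your argument is correct and shares the paper's core ingredients: choose realisers $\tau_i$ of each $d_{\T_\Yf}(\gf_i,\ff)$, extract $\sigma_i$ and $\tilde\pi_i$ as marginals, use that $\sigma_i$ has zero GW cost to identify $h$ and $\tilde h$ almost everywhere, and then invoke the weighted-mean characterisation pointwise. The packaging, however, is different. The paper eliminates the $Y$-variable immediately: once $h=\tilde h$ a.e., it projects each integral down to $(\tilde Y\times X_i)^2$, lifts to the gluing $\tilde\gamma$, applies the pointwise minimisation $\sum_i\rho_i|a-g_i|^2\ge\sum_i\rho_i|m_\rho-g_i|^2$, and then compares against $d_{\T_\Yf}^2\bigl((m_\rho-\tilde h)|_{\tilde\YY_\times[\tilde\gamma]},(g_i-\tilde h)|_{\tilde\YY_i}\bigr)$ via the simple diagonal plan $\tilde\tau_i=(P_{Z_\times},P_{Z_i})_\#\tilde\gamma$; a final appeal to \cref{lem:equivalence_of_tangents_melting} reconnects $(g_i-\tilde h)|_{\tilde\YY_i}$ with $(g_i-h)|_{\YY_i}$. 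You instead keep all $N$ copies of $Y$ in play by building the conditionally independent coupling $\nu$ on $\tilde Y\times X_\times\times Y^N$, use the full Pythagorean equality (dropping the nonnegative $(c-b)^2$ term at the end), and bound each $d_{\T_\Yf}^2(\gf_i,\tilde\mf_\rho)$ directly through the projection $\nu_i'\in\Pio(\YY_i,\tilde\YY_\times[\tilde\gamma])$, which makes the argument self-contained and bypasses \cref{lem:equivalence_of_tangents_melting}. The paper's route is lighter on measure-theoretic machinery; yours is slightly more explicit about why the inequality holds for \emph{every} gluing $\tilde\gamma$ and avoids the auxiliary lemma.
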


\begin{proof}
    Let $\tau_i \in \Pio(\tilde\YY,\YY_i)$ realize 
    the tangent metric 
    $\smash{d_{\T_\Yf}^2}(f\rest_{\tilde\YY},(g_i - h)\rest_{\YY_i})$
    in \cref{eq:tan-met}.
    Since
    $\sigma_i 
    \coloneqq 
    (P_{Y \times \tilde Y})_\# \tau_i
    \in \Pio(\YY, \tilde \YY)$,
    the marginal
    $\tilde \pi_i 
    \coloneqq 
    (P_{\tilde Y \times X_i})_\# \tau_i
    \in \Melt_\YY(\sigma_i,\pi_i)$
    is optimal by \cref{lem:melting_remains_optimal}.
    In analogy to above,
    we define 
    $\tilde\YY_i
    \coloneqq 
    (\tilde Y \times X_i, \tilde h, \tilde\pi_i)$.
    Using that
    $h(\cdot_1,\cdot_3) = \tilde h(\cdot_2, \cdot_4)$
    a.e.\ 
    with respect to each
    $\tau_i(\cdot_1,\cdot_2)\otimes \tau_i(\cdot_3,\cdot_4)$,
    we rewrite the tangential barycentric loss in \cref{eq:tan-bary-loss} as
    \begin{align*}
        \F_{\GWTB^\Yf_\rho}^{\gf_1,\dots,\gf_N}(\ff)
        &= \sum_{i=1}^N \rho_i
        \iint_{(\tilde{Y} \times Y \times X_i)^2} 
        \lvert 
        f(\tilde{y},\tilde{y}')
        - g_i(x_i,x_i') 
        + h(y,y')  
        \rvert^2
        \dx \tau_i(\tilde{y},y,x_i) 
        \dx \tau_i(\tilde{y}',y',x_i')
        \\
        &=
        \sum_{i=1}^N \rho_i
        \iint_{(\tilde{Y} \times X_i)^2}  
        \lvert  
        f(\tilde{y},\tilde{y}')
        - g_i(x_i,x_i') 
        + \tilde{h}(\tilde{y},\tilde{y}') 
        \rvert^2 
        \dx \tilde \pi_i(\tilde{y},x_i) 
        \dx \tilde \pi_i(\tilde{y}',x_i')
        \\
        &=
        \iint_{(\tilde{Y} \times X_\times)^2}  
        \sum_{i=1}^N \rho_i
        \lvert  
        f(\tilde{y},\tilde{y}') 
        - g_i(x_i,x_i') 
        + \tilde{h}(\tilde{y},\tilde{y})
        \rvert^2 
        \dx \tilde{\gamma}(\tilde{y},x_\times)
        \dx \tilde{\gamma}(\tilde{y}',x_\times'),
    \end{align*}
    for an arbitrary gluing 
    $\tilde\gamma \in \Gamma_{\tilde\YY}(\tilde \pi_1,\dotsc,\tilde \pi_N)$.
    Exploiting that the functional
    $a \mapsto \sum_{i=1}^N \rho_i \lvert a - a_i \rvert^2$
    is minimized by $a = \sum_{i=1}^N \rho_i a_i$
    and applying this pointwisely 
    inside the integral further yield
    \begin{align*}
        &\F_{\GWTB^\Yf_\rho}^{\gf_1,\dots,\gf_N}(\ff)
        \\
        &\ge
        \iint_{(\tilde{Y} \times X_\times)^2}  
        \sum_{i=1}^N \rho_i
        \lvert
        m_\rho(x_\times,x_\times')
        - \tilde h(\tilde y, \tilde y')
        - g_i(x_i,x_i') 
        + \tilde h(\tilde y, \tilde y')
        \rvert^2 
        \dx \tilde{\gamma}(\tilde{y},x_\times)
        \dx \tilde{\gamma}(\tilde{y}',x_\times')
    \end{align*}
    Writing 
    $Z_\times \coloneqq \tilde Y \times X_\times$
    and
    $Z_i \coloneqq \tilde Y \times X_i$,
    using
    $\tilde{\tau}_i 
    \coloneqq
    (P_{Z_\times},P_{Z_i})_\# 
    \tilde{\gamma}
    \in \Pio(\tilde\YY_\times, \tilde\YY_i)$,
    and taking the infimum over all optimal plans,
    we obtain
    \begin{align*}
        \F_{\GWTB^\Yf_\rho}^{\gf_1,\dots,\gf_N}(\ff)
        &\ge 
        \sum_{i=1}^N \rho_i
        \iint_{
        (Z_\times \times Z_i)^2
        }  
        \lvert 
        (m_\rho - \tilde{h}) (z_1,z_1') 
        - (g_i - \tilde{h}) (z_2,z_2')
        \rvert^2
        \dx 
        \tilde{\tau}_i (z_1,z_2)
        \dx 
        \tilde{\tau}_i (z_1',z_2')
        \\
        &\ge
        \sum_{i=1}^N \rho_i 
        d_{\T_\Yf}^2
        \bigl( 
        (m_\rho - \tilde{h})\rest_{\tilde{\YY}_\times [\tilde\gamma]},
        (g_i - \tilde{h})\rest_{\tilde{\YY}_i}
        \bigr)
        =
        \sum_{i=1}^N \rho_i 
        d_{\T_\Yf}^2
        \bigl( 
        (m_\rho - \tilde{h})\rest_{\tilde{\YY}_\times [\tilde\gamma]},
        (g_i - h)\rest_{\YY_i}
        \bigr),
    \end{align*}
    where the last equality follows from
    \cref{lem:equivalence_of_tangents_melting}
    with $\sigma_i$ from above.
\end{proof}

\begin{corollary}\label{cor:tan-estimate}
    Let $\gf_i$ be defined as in \cref{eq:lift-Xi},
    and let $\ff \in \T_\Yf$ be arbitrary.
    Then there exist $\pi_i^* \simeq_\YY \pi_i$
    (dependent on $\ff$)
    so that
    \begin{equation*}
        \F_{\GWTB^\Yf_\rho}^{\gf_1,\dots,\gf_N} (\ff)
        \ge
        \F_{\GWTB^\Yf_\rho}^{\gf_1,\dots,\gf_N} (\mf_\rho)
        \quad\text{with}\quad
        \mf 
        \coloneqq 
        \llbracket (m_\rho - h)\rest_{\YY_\times[\gamma]} \rrbracket 
    \end{equation*}
    for at least one gluing 
    $\gamma \in \Gamma_{\YY}(\pi_1^*,\dotsc,\pi_N^*)$.
\end{corollary}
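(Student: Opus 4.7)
The corollary is essentially a re-encoding of Lemma 5.3 on the original representative $\YY$ instead of on the auxiliary $\tilde\YY$ on which a representative of $\ff$ happens to live. My approach is to apply the lemma with some arbitrary $f\rest_{\tilde\YY}\in\ff$ and then transport the conclusion back to $\YY$ via a multi-marginal gluing, showing along the way that the resulting plans are equivalent to the $\pi_i$ in the sense $\simeq_\YY$ and that the associated tangent vector coincides with the one produced by the lemma.

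First, I pick any $f\rest_{\tilde\YY}\in\ff$ and invoke \cref{lem:tan-estimate} to obtain plans $\tilde\pi_i\in\Melt_\YY(\sigma_i,\pi_i)$ with $\sigma_i\in\Pio(\YY,\tilde\YY)$, a gluing $\tilde\gamma\in\Gamma_{\tilde\YY}(\tilde\pi_1,\dots,\tilde\pi_N)$, and the inequality $\F_{\GWTB^\Yf_\rho}^{\gf_1,\dots,\gf_N}(\ff)\ge\F_{\GWTB^\Yf_\rho}^{\gf_1,\dots,\gf_N}(\tilde\mf_\rho)$ for $\tilde\mf_\rho=\llbracket(m_\rho-\tilde h)\rest_{\tilde\YY_\times[\tilde\gamma]}\rrbracket$. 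Next, I fix \emph{one} optimal plan $\sigma\in\Pio(\YY,\tilde\YY)$ and, using Dudley's lemma, a gluing $\Xi\in\Gamma_{\tilde\YY}(\sigma^\tT,\tilde\gamma)$ on $\tilde Y\times Y\times X_\times$. Setting $\gamma\coloneqq(P_{Y\times X_\times})_\#\Xi$ and $\pi_i^*\coloneqq(P_{Y\times X_i})_\#\gamma$, I obtain $\pi_i^*\in\Melt_{\tilde\YY}(\sigma^\tT,\tilde\pi_i)\subset\Pio(\YY,\XX_i)$ by \cref{lem:melting_remains_optimal}, and $\gamma\in\Gamma_\YY(\pi_1^*,\dots,\pi_N^*)$ by construction.

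Two verifications remain. For $\pi_i^*\simeq_\YY\pi_i$, I glue the witnessing gluing of $\tilde\pi_i\in\Melt_\YY(\sigma_i,\pi_i)$ on $Y_1\times\tilde Y\times X_i$ with the appropriate projection of $\Xi$ on $\tilde Y\times Y_2\times X_i$ along the common measure $\tilde\pi_i$ on $\tilde Y\times X_i$. The $Y_1\times Y_2$-marginal of the resulting measure on $Y_1\times\tilde Y\times Y_2\times X_i$ is a self-coupling $\sigma_i^*$ of $\YY$; its optimality follows because optimality of $\sigma_i$ and of $\sigma^\tT$ yields $h(y_1,y_1')=\tilde h(\tilde y,\tilde y')=h(y_2,y_2')$ almost everywhere along the chain, hence $F_{\GW}^{\YY,\YY}(\sigma_i^*)=0$. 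The $Y_1\times Y_2\times X_i$-projection then lies in $\Gamma_\YY(\sigma_i^*,\pi_i)$ with $Y_2\times X_i$-marginal equal to $\pi_i^*$, giving $\pi_i^*\in\Melt_\YY(\sigma_i^*,\pi_i)$. For the identity $\tilde\mf_\rho=\mf_\rho$ with $\mf_\rho\coloneqq\llbracket(m_\rho-h)\rest_{\YY_\times[\gamma]}\rrbracket$, I duplicate the $X_\times$-coordinate of $\Xi$ to form $\Xi'\in\pmeas((\tilde Y\times X_\times)\times(Y\times X_\times))$ concentrated on $\{x_\times=x_\times'\}$, which has marginals $\tilde\gamma$ and $\gamma$. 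The $\tilde Y\times Y$-marginal of $\Xi$ being $\sigma^\tT$, hence GW-optimal, yields $\tilde h(\tilde y,\tilde y')=h(y,y')$ almost everywhere with respect to $\Xi'\otimes\Xi'$. Consequently $\Xi'\in\Pio(\tilde\YY_\times[\tilde\gamma],\YY_\times[\gamma])$, and the two gauges agree almost everywhere under $\Xi'\otimes\Xi'$, delivering $\tilde\mf_\rho=\mf_\rho$.

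Combining both points yields $\F_{\GWTB^\Yf_\rho}^{\gf_1,\dots,\gf_N}(\mf_\rho)=\F_{\GWTB^\Yf_\rho}^{\gf_1,\dots,\gf_N}(\tilde\mf_\rho)\le\F_{\GWTB^\Yf_\rho}^{\gf_1,\dots,\gf_N}(\ff)$, proving the corollary. The main technical hurdle is the composition argument establishing $\pi_i^*\simeq_\YY\pi_i$: one must track how two optimal plans glued along $\tilde\YY$ induce an \emph{optimal} self-coupling of $\YY$, whereas the marginal bookkeeping and the duplication trick for the tangent equivalence are routine once the right gluings are in place.
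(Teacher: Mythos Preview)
Your proposal is correct and follows essentially the same route as the paper: apply \cref{lem:tan-estimate}, then melt the resulting $\tilde\gamma$ back to $\YY$ through a single optimal $\sigma\in\Pio(\YY,\tilde\YY)$, and verify that the pulled-back plans are $\simeq_\YY$-equivalent to the original $\pi_i$ while the tangent class is unchanged. The only differences are expository: where the paper invokes \cref{lem:equivalence_of_tangents_melting} directly for $\tilde\mf_\rho=\mf_\rho$, you redo that lemma's argument via the duplication coupling $\Xi'$; and where the paper compresses the equivalence $\pi_i^*\simeq_\YY\pi_i$ into the set-level chain $\pi_i^*\in\Melt_{\tilde\YY}(\sigma,\Melt_\YY(\sigma_i,\pi_i))=\Melt_\YY(\Melt_{\tilde\YY}(\sigma_i^\tT,\sigma),\pi_i)$ together with \cref{lem:melting_remains_optimal}, you carry out the underlying gluing along $\tilde\pi_i$ explicitly and check optimality of the resulting self-coupling by hand.
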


\begin{proof}
    Let
    $\tilde{\pi}_1,\dotsc,\tilde{\pi}_N$
    be constructed as in 
    the proof of \cref{lem:tan-estimate}
    and consider a gluing
    $\tilde\gamma \in \Glue_{\tilde\YY}(\tilde \pi_1, \dots, \tilde\pi_N)$.
    For fixed $\sigma \in \Pio(\tilde \YY, \YY)$,
    \cref{lem:equivalence_of_tangents_melting} implies
    $(m_\rho - \tilde h)\rest_{\tilde \YY_\times[\tilde \gamma]}
    \simeq
    (m_\rho - h)\rest_{\YY_\times [\gamma^*]}$
    for $\gamma^* \in \Melt_{\tilde\YY}(\sigma,\tilde\gamma)$.
    Defining $\pi^*_i \coloneqq (P_{Y \times X_i})_\# \gamma^*$,
    we readily obtain
    $\gamma^* \in \Glue_\YY(\pi_1^*, \dots, \pi_N^*)$.
    The assertion $\pi_i^* \simeq_\YY \pi_i$
    now follows from
    \[
    \pi_i^* 
    \in 
    \Melt_\YY(\sigma,\tilde\pi_i)
    \subset \Melt_{\tilde\YY}(\sigma,\Melt_\YY(\sigma_i, \pi_i))
    = \Melt_\YY(\Melt_{\tilde\YY}(\sigma_i^\tT, \sigma), \pi_i)
    \]
    since 
    $\Melt_{\tilde\YY}(\sigma_i^\tT, \sigma)
    \subset \Pio(\YY,\YY)$ by \cref{lem:melting_remains_optimal}.
\end{proof}

The proof of \cref{thm:tan_min} also requires
the following continuity result,
which is a multi-marginal variant of 
\cite[Lem~5.5]{sturm2020space}.

\begin{lemma}\label{lem:multi-cont}
    Let $I_\times \coloneqq \bigtimes_{i=1}^M I_i$ 
    be the unit cube with $I_i \coloneqq [0,1]$.
    For fixed $f_i \in \gauges(I_i, \lambda)$,
    the mapping 
    \begin{equation*}
        \Xi(\eta) 
        \coloneqq 
        \Bigl(
        \iint_{I_\times^2}
        \Bigl\lvert \sum_{i=1}^M f_i(x_i,x_i') \Bigr\rvert^2
        \dx \eta(x_1,\dots,x_M) \dx \eta(x_1',\dots,x_M')
        \Bigr)^{\frac12}
    \end{equation*}
    is continuous on 
    $\Beta_M \coloneqq \{ \eta \in \pmeas(I_\times) : (P_{I_i})_\# \eta = \lambda \}$
    equipped with the weak convergence from $\pmeas(I_\times)$.
\end{lemma}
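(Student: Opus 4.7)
The plan is to reduce the continuity of $\Xi$ to that of $\Xi^2$: since $\Xi \geq 0$ and the square root is continuous on $[0,\infty)$, it suffices to prove weak continuity of $\eta \mapsto \Xi(\eta)^2$. Expanding the square under the integral gives
\begin{equation*}
    \Xi(\eta)^2 = \sum_{i,j=1}^M \iint_{I_\times^2} f_i(x_i,x_i') \, f_j(x_j, x_j') \dx \eta(x_1,\dots,x_M) \dx \eta(x_1',\dots,x_M'),
\end{equation*}
so it suffices to verify weak continuity of each summand. The $(i,j)$-summand depends on $\eta$ only through the bi-marginal $\eta_{ij} \coloneqq (P_{I_i \times I_j})_\# \eta \in \pmeas(I_i \times I_j)$, and since push-forwards under continuous maps are weakly continuous, the task reduces to showing weak continuity of
\begin{equation*}
    \Xi_{ij}(\mu) \coloneqq \iint_{(I_i \times I_j)^2} f_i(x_i, x_i') \, f_j(x_j, x_j') \dx \mu(x_i, x_j) \dx \mu(x_i', x_j')
\end{equation*}
on the set $\Omega \coloneqq \{ \mu \in \pmeas(I_i \times I_j) : (P_{I_i})_\#\mu = (P_{I_j})_\#\mu = \lambda \}$, which is weakly compact.

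For continuous symmetric integrands the continuity would be immediate from the definition of weak convergence on the compact cube $[0,1]^4$. For general $f_i, f_j \in \gauges([0,1],\lambda)$, the plan is to approximate each $f_i$ in $L^2(\lambda \otimes \lambda)$ by a continuous symmetric function $\tilde f_i$, which is possible since symmetric continuous functions are dense in $L^2_\sym(\lambda \otimes \lambda)$ via the standard density of $C([0,1]^2)$ in $L^2$ followed by symmetrization. The crucial uniform estimate is a consequence of the marginal condition: for every $\mu \in \Omega$ and every $F \in L^2(\lambda \otimes \lambda)$,
\begin{equation*}
    \iint \lvert F(x_i,x_i') \rvert^2 \dx\mu(x_i,x_j) \dx\mu(x_i',x_j') = \|F\|_{L^2(\lambda\otimes\lambda)}^2,
\end{equation*}
since the $(x_i,x_i')$-marginal of $\mu \otimes \mu$ equals $\lambda \otimes \lambda$. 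A Cauchy--Schwarz estimate on the bilinear difference $\Xi_{ij}(\mu) - \tilde \Xi_{ij}(\mu)$ (with $\tilde\Xi_{ij}$ defined via $\tilde f_i, \tilde f_j$) then yields a bound purely in terms of $\|f_i - \tilde f_i\|_{L^2(\lambda\otimes\lambda)}$ and $\|f_j - \tilde f_j\|_{L^2(\lambda\otimes\lambda)}$, uniform over all $\mu \in \Omega$.

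The main obstacle is exactly the discontinuity and unboundedness of the $L^2$-gauges: weak convergence $\eta_n \weakly \eta$ does not a priori transfer to integrals against discontinuous functions. This is overcome by the uniform control above, which converts all $\mu$-dependent $L^2$-norms back to the fixed reference norm $\|\cdot\|_{L^2(\lambda\otimes\lambda)}$; this is the multi-marginal analogue of the trick underlying \cite[Lem~5.5]{sturm2020space}. Combining uniform approximation by the $\tilde\Xi_{ij}$ (which are weakly continuous because their integrands are continuous) with a standard $\varepsilon/3$-argument then proves weak continuity of each $\Xi_{ij}$, and summing over $(i,j)$ completes the proof.
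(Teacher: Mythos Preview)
Your proposal is correct and rests on the same three ingredients as the paper's proof: density of continuous symmetric functions in $L^2_\sym(\lambda\otimes\lambda)$, the observation that the marginal constraint converts $\eta$-dependent $L^2$-norms of single-coordinate functions into the fixed reference norm $\lVert\cdot\rVert_{L^2(\lambda\otimes\lambda)}$, and weak continuity of the approximating functional with continuous integrand. The organization differs slightly. The paper stays at the level of $\Xi$ itself: it approximates the full sum $\sum_i f_i$ by $\sum_i f_{i,\epsilon}$ and uses the reverse triangle inequality for the $L^2_\sym(\eta\otimes\eta)$-norm to obtain $\lvert \Xi(\eta)-\Xi_\epsilon(\eta)\rvert \le \sum_i \lVert f_i - f_{i,\epsilon}\rVert_{L^2(\lambda\otimes\lambda)} \le M\epsilon$ uniformly in $\eta$, then passes to the limit via $\eta_n\otimes\eta_n \weakly \eta\otimes\eta$. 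Your route---squaring, expanding bilinearly, projecting to bi-marginals, and applying Cauchy--Schwarz---is a little longer but makes the role of the two-marginal structure more explicit, which connects nicely with the bi-marginal reduction used elsewhere in the paper. One small point to make explicit in your write-up: the weak continuity of $\tilde\Xi_{ij}$ is not quite ``immediate from the definition'' since the integral is against $\mu\otimes\mu$ rather than $\mu$; you need the (standard) fact that $\mu_n \weakly \mu$ implies $\mu_n\otimes\mu_n \weakly \mu\otimes\mu$, which the paper invokes as \cite[Lem.~A.1]{BBS2022multi}.
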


\begin{proof}
    Similarly to the proof of \cite[Lem~5.5]{sturm2020space},
    for every $\epsilon > 0$,
    there exists a symmetric, continuous function 
    $f_{i,\epsilon} \in C_\sym(I_i \times I_i)$
    so that
    $\smash{\lVert f_i - f_{i,\epsilon} \rVert_{L^2_\sym(\lambda\otimes\lambda)}} 
    \le \epsilon$.
    For these functions,
    we analogously define
    $\Xi_\epsilon(\eta)
    \coloneqq
    \lVert \sum_{i=1}^M f_{i,\epsilon} \rVert_{L^2_\sym(\eta \otimes \eta)}$.
    Fixing $\eta \in \Beta_M$,
    and using the triangle inequality twice,
    we have
    \begin{equation*}
        \lvert \Xi(\eta) - \Xi_\epsilon(\eta) \rvert
        \le 
        \Bigl\lVert 
        \sum_{i=1}^M f_i 
        - \sum_{i=1}^M f_{i,\epsilon} 
        \Bigr\rVert_{L^2_\sym(\eta \otimes \eta)}
        \le
        \sum_{i=1}^M
        \lVert 
        f_i - f_{i,\epsilon} 
        \rVert_{L^2_\sym(\lambda\otimes\lambda)}
        \le M \epsilon.
    \end{equation*}
    Let $\eta_n \weakly \eta$ be a weakly convergence sequence in $\Beta_M$.
    Due to \cite[Lem.~A.1]{BBS2022multi},
    we have
    $\eta_n \otimes \eta_n \weakly \eta \otimes \eta$
    and thus 
    $\Xi_\epsilon(\eta_n) \to \Xi_\epsilon(\eta)$.
    In total,
    we obtain
    \begin{align*}
        \lim_{n \to \infty} 
        \lvert \Xi(\eta_n) - \Xi(\eta) \rvert 
        \leq 
        \lim_{n \to \infty}
        \bigl(
        \lvert \Xi(\eta_n) - \Xi_\epsilon(\eta_n) \rvert
        +
        \lvert \Xi_\epsilon(\eta_n) - \Xi_\epsilon(\eta) \rvert
        +
        \lvert \Xi_\epsilon(\eta) - \Xi(\eta) \rvert
        \bigr)
        \leq 2M \epsilon.
    \end{align*}
    Since $\epsilon > 0$ is arbitrary, 
    the continuity is established.
\end{proof}

\begin{proof}[Proof of \cref{thm:tan_min}]
    Without loss of generality,
    we consider the representatives 
    $\XX_i \coloneqq ([0,1], g_i, \lambda)$,
    $\YY \coloneqq ([0,1], h, \lambda)$,
    and $\tilde \YY \coloneqq ([0,1], \tilde h, \lambda)$.
    Let $(\ff^{(n)})_{n \in \NN} \subset \T_{\Yf}$ be a minimizing sequence 
    of the tangent barycenter functional
    \smash{$\F_{\GWTB^\Yf_\rho}^{\gf_1,\dots,\gf_N}$} 
    in \cref{eq:tan-bary-loss}.
    Due to \cref{cor:tan-estimate},
    for every tangent $\ff^{(n)}$,
    there exists an 
    $\mf^{(n)}_\rho \coloneqq \llbracket (m_\rho-h) \rest_{\YY_\times[\gamma_n]}\rrbracket$
    with
    $\gamma_n \in \Gamma_\YY(\pi^*_{1,n},\dotsc,\pi^*_{N,n})$, $\pi^*_{i,n} \simeq_\YY \pi_i$,
    such that
    \begin{equation*}
        \F_{\GWTB^\Yf_\rho}^{\gf_1,\dots,\gf_N} (\ff^{(n)})
        \ge
        \F_{\GWTB^\Yf_\rho}^{\gf_1,\dots,\gf_N} (\mf^{(n)}_\rho);
    \end{equation*}
    so $(\mf^{(n)}_\rho)_{n\in\NN} \subset \T_\Yf$ is itself a minimizing sequence.
    Now let 
    $\tau_{i,n} \!
    \in \Pio(\YY_i, \YY_\times[\gamma_n]) 
    \subset \pmeas([0,1]^{N+3})$
    realize $\smash{d^2_{\T_\Yf}(\gf_i,\mf^{(n)}_\rho)}$.
    Through the compact support,
    each sequence $(\tau_{i,n})_{n\in\NN}$ is tight,
    and we successively find subsequences 
    such that  
    $\tau_{i,n_\ell} \weakly \tau_i^*$
    and $\gamma_{n_\ell} \weakly \gamma^*$,
    $\ell \to \infty$. 
    Therefore $\tau_i^* \in \Pi(\YY_i, \YY_\times[\gamma^*])$.
    We denote the marginals
    $\pi_i^* \coloneqq (P_{Y \times X_i})_\# \gamma^* = \lim_{n \to \infty} \pi^*_{i,n}$.
    Here, the last equality follows by 
    the weak continuity 
    of the marginal projections.
    Furthermore, 
    due to the weak continuity of 
    the relation $\simeq_\YY$, i.e.\ 
    \cref{prop:sim_eq_continuous},
    we obtain
    $\gamma^* \in \Gamma_\YY(\pi_1^*,\dotsc,\pi_N^*)$
    with
    $\pi_i^* \simeq_\YY \pi_i$.
    Since the gauges of
    $\YY_\times[\gamma_n]$, $n \in \NN$ 
    and $\YY_\times[\gamma^*]$ 
    coincide,
    \cref{lem:multi-cont} moreover implies 
    $\tau_i^* \in \Pio(\YY_i, \YY_\times[\gamma^*])$
    with $\GW(\YY_i, \YY_\times[\gamma^*]) = 0$.
    Making use of \cref{lem:multi-cont} once again,
    we finally have
    \begin{align*}
        &\lim_{\ell\to\infty}
        \F_{\GWTB^\Yf_\rho}^{\gf_1,\dots,\gf_N}(\mf^{(n_\ell)}_\rho)
        \\
        &= 
        \lim_{\ell \to \infty}
        \sum_{i=1}^N \rho_i  
        \iint_{([0,1]^{N+3})^2} 
        \lvert (g_i - h)(\cdot_1,\cdot_3) - (m_\rho - h)(\cdot_2,\cdot_4) \rvert^2 
        \dx \tau_{i,n}(\cdot_1,\cdot_2)
        \dx \tau_{i,n}(\cdot_3,\cdot_4)
        \\
        &=
        \sum_{i=1}^N \rho_i 
        \iint_{([0,1]^{N+3})^2}
        \lvert (g_i - h)(\cdot_1,\cdot_3) - (m_\rho - h)(\cdot_2,\cdot_4) \rvert^2 
        \dx \tau_{i}^*(\cdot_1,\cdot_2)
        \dx \tau_{i}^*(\cdot_3,\cdot_4)
        \\
        &\ge \F_{\GWTB^\Yf_\rho}^{\gf_1,\dots,\gf_N}(\mf^*_\rho)
        \quad\text{with}\quad
        \mf^*_\rho 
        \coloneqq 
        \llbracket (m_\rho-h) \rest_{\YY_\times[\gamma^*]} \rrbracket.
    \end{align*}
    Since $(\mf^{(n_\ell)}_\rho)_{\ell\in\NN}$ is a minimizing sequence,
    $\mf^*_\rho$ has to be a minimizer 
    and has the desired form.
\end{proof}

\cref{thm:tan_min} motivates the following Fréchet mean procedure:
\begin{enumerate}
    \item Choose representatives $\XX_i \in \Xf_i$.
    \item Select a starting reference $\Yf \coloneqq \llbracket \YY \rrbracket \in \GM$.
    \item Compute $\pi_i \in \Pio(\YY,\XX_i)$ 
    to determine the liftings $\gf_i$ in \cref{eq:lift-Xi}.
    \item Find a tangential barycenter $\mf_\rho$ as in \cref{thm:tan_min}.
    \item Update $\Yf \leftarrow \Exp_\Yf(\mf_\rho)$,
    and repeat from step 3. 
\end{enumerate}
Note that the mean gauge of 
$\Exp_\Yf(\mf_\rho) = \llbracket( Y \times X_\times, m_\rho, \gamma^*)\rrbracket$
only depends on the subspace $X_\times$
so that
$\Exp_\Yf(\mf_\rho) = \llbracket(X_\times, m_\rho, (P_{X_\times})_\#\gamma^*)\rrbracket$.
At its heart,
this barycenter procedure is a fixpoint iteration
of the set-valued mapping
\begin{equation*}
    \TB_\rho(\Yf) 
    \coloneqq
    \bigl\{
        \llbracket
        (X_\times, m_\rho , (P_{X_\times})_\# \gamma^*) 
        \rrbracket
        :
        \gamma^* \in \Glue_\YY(\pi_1^*, \dots, \pi_N^*)
        \;\text{as in \cref{thm:tan_min}},
        \YY \in \Yf
    \bigr\},
\end{equation*}
i.e.\ we consider the iteration $\Yf_{n+1} \in \TB_\rho(\Yf_n)$
beginning from some $\Yf_0 \in \GM$.

\section{Tangential Fixpoint Iteration}\label{sec:6}

Unfortunately,
the proposed tangential barycenter procedure is intractable in practice.
In particular,
the search for the minimizer $\mf_\rho$ in \cref{thm:tan_min} is unmanageable
due to the lack of efficient methods to determine
the gluing $\gamma^*$ numerically.
As a remedy,
we relax the fixpoint iteration regarding $\TB_\rho$
and refrain from the exact minimization on $\T_\Yf$.
More precisely,
we study the relaxed, set-valued mapping
\begin{equation*}
    \tildeTB_\rho(\Yf)
    \coloneqq
    \bigl\{
    \llbracket (X_\times, m_\rho, \mu) \rrbracket
    :
    \mu \in \Melto_\Yf(\XX_1, \dots, \XX_N)
    \bigr\}
\end{equation*}
with the \emph{set of optimal meltings}
\begin{equation*}
    \Melto_\Yf(\XX_1,\dots, \XX_N)
    \coloneqq
    \bigl\{
    \mu \in \Melt_\YY(\pi_1,\dots,\pi_N)
    :
    \pi_i \in \Pio(\YY, \XX_i)
    \bigr\}.
\end{equation*}
for any $\YY \in \Yf$.
Note that $\Melto_\Yf$ is independent 
of the chosen representative.

\begin{proposition}
    The mapping $\tildeTB_\rho$ 
    (as well as $\TB_\rho$) 
    is well-defined 
    and independent from the selected representatives
    $\XX_1,\dots,\XX_N, \YY$.
\end{proposition}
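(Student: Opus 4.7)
The plan is to establish the claim in two stages: first, I show that the set $\Melto_\Yf(\XX_1,\dots,\XX_N)$ is independent of the chosen representative $\YY \in \Yf$, and second, I show that the resulting set of homomorphic classes $\{\llbracket(X_\times, m_\rho, \mu)\rrbracket : \mu \in \Melto_\Yf\}$ does not depend on the chosen $\XX_i \in \Xf_i$. Both steps rely on disintegration of an optimal plan to build joint couplings, followed by marginal bookkeeping using \cref{lem:melting_remains_optimal}.

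For the first stage, fix $\YY, \tilde\YY \in \Yf$ and pick any $\sigma \in \Pio(\YY,\tilde\YY)$, which exists by \cref{thm:min-exists} (or directly by the existence of GW-optimal bi-marginals). Given $\mu \in \Melto_\Yf$ defined via $\YY$, write $\mu = (P_{X_\times})_\# \gamma$ for some $\gamma \in \Gamma_\YY(\pi_1,\dots,\pi_N)$ with $\pi_i \in \Pio(\YY,\XX_i)$. Disintegrating $\sigma = \int_Y \sigma_y \dx \upsilon(y)$ along $Y$, I set
\begin{equation*}
\dx\hat{\gamma}(y,\tilde y, x_\times) \coloneqq \dx\sigma_y(\tilde y)\dx\gamma(y,x_\times)
\in \pmeas(Y \times \tilde Y \times X_\times).
\end{equation*}
Its $(Y\times\tilde Y)$- and $(Y\times X_\times)$-marginals equal $\sigma$ and $\gamma$ respectively, so by \cref{lem:melting_remains_optimal} each $\tilde\pi_i \coloneqq (P_{\tilde Y\times X_i})_\#\hat\gamma$ lies in $\Pio(\tilde\YY,\XX_i)$. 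Moreover $(P_{\tilde Y \times X_\times})_\# \hat\gamma$ is a gluing of the $\tilde\pi_i$ along $\tilde\YY$ whose $X_\times$-projection equals $(P_{X_\times})_\#\gamma = \mu$. Hence $\mu \in \Melto_\Yf$ defined via $\tilde\YY$, and by symmetry the two sets coincide.

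For the second stage, fix $\YY \in \Yf$ and choose $\tilde\XX_i \simeq \XX_i$ with $\sigma_i \in \Pio(\XX_i,\tilde\XX_i)$. Given $\mu = (P_{X_\times})_\# \gamma$ with underlying gluing $\gamma \in \Gamma_\YY(\pi_1,\dots,\pi_N)$, I disintegrate each $\sigma_i = \int_{X_i} \sigma_{i,x_i}\dx\xi_i(x_i)$ and define
\begin{equation*}
\dx\hat\gamma(y,x_\times,\tilde x_\times)
\coloneqq
\dx\gamma(y,x_\times)\prod_{i=1}^N \dx\sigma_{i,x_i}(\tilde x_i).
\end{equation*}
A direct computation shows $(P_{Y\times X_\times})_\# \hat\gamma = \gamma$ and $(P_{X_i\times \tilde X_i})_\# \hat\gamma = \sigma_i$, and one verifies via \cref{lem:melting_remains_optimal} that $(P_{Y\times \tilde X_i})_\# \hat\gamma \in \Pio(\YY,\tilde\XX_i)$, so that $\tilde\mu \coloneqq (P_{\tilde X_\times})_\# \hat\gamma$ lies in $\Melto_\Yf(\tilde\XX_1,\dots,\tilde\XX_N)$. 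To conclude homomorphic equivalence of $(X_\times,m_\rho,\mu)$ and $(\tilde X_\times,\tilde m_\rho,\tilde\mu)$, I take $\tau \coloneqq (P_{X_\times\times \tilde X_\times})_\# \hat\gamma$, whose marginals are $\mu$ and $\tilde\mu$, and note that the $(X_i\times\tilde X_i)$-marginal of $\tau$ is again $\sigma_i$. Optimality of each $\sigma_i$ combined with $\GW(\XX_i,\tilde\XX_i)=0$ yields $g_i(x_i,x_i') = \tilde g_i(\tilde x_i,\tilde x_i')$ a.e.\ w.r.t.\ $\sigma_i \otimes \sigma_i$, whence $m_\rho(x_\times,x_\times') = \tilde m_\rho(\tilde x_\times,\tilde x_\times')$ a.e.\ w.r.t.\ $\tau\otimes\tau$ by summing over $i$. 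Symmetry (using $\sigma_i^\tT$) gives the reverse inclusion, so the set of homomorphic classes is the same.

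The argument for $\TB_\rho$ proceeds along the same lines: the characterization of tangential barycenters in \cref{thm:tan_min} is expressed intrinsically through the tangent metric, which by construction does not depend on representatives, so the two-stage transfer above carries over mutatis mutandis to the particular gluings $\gamma^*$ appearing in $\TB_\rho$. The main obstacle is the careful marginal bookkeeping for the constructed $\hat\gamma$—in particular verifying that all relevant projections remain optimal—which is handled by repeated appeal to \cref{lem:melting_remains_optimal} and the a.e.\ equality of gauges along optimal couplings.
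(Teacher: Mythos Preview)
Your proof is correct and takes a genuinely different route from the paper. The paper establishes representative independence by pulling everything back to parametrizations over $[0,1]$: it uses \cref{lem:bi-par} and the multi-marginal analogue \cref{lem:mul-par} to identify $\Pio(\YY,\XX_i)$, the gluings $\Glue_\YY$, and the meltings $\Melt_\YY$ with their counterparts over common reference spaces $\JJ,\II_i$, from which the independence is read off directly. You instead stay entirely within the gluing/melting framework, using disintegration of an optimal coupling $\sigma \in \Pio(\YY,\tilde\YY)$ (resp.\ $\sigma_i \in \Pio(\XX_i,\tilde\XX_i)$) to build an explicit joint measure $\hat\gamma$, and then appeal repeatedly to \cref{lem:melting_remains_optimal} to verify that all the relevant bi-marginals remain optimal. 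Your approach avoids the Lebesgue--Rokhlin parametrization machinery altogether and is arguably more self-contained, at the price of slightly heavier marginal bookkeeping; the paper's approach trades that bookkeeping for the reduction to a fixed common space, which makes the equality of melting sets almost immediate once the push-forward identities for $\Glue$ and $\Melt$ are in place.
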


\begin{proof}
    Initially,
    let the representatives $\XX_i \coloneqq (X_i, g_i, \xi_i) \in \Xf$
    be fixed
    and homomorphic to $\II_i = ([0,1], \bar g_i, \lambda)$
    via $\Phi_i \in \para(\XX_i)$.
    Similarly to the proof of \cref{prop:inv-hom},
    for two representatives 
    $\YY \coloneqq (Y,h,\upsilon),
    \tilde\YY \coloneqq (\tilde Y, \tilde h, \tilde \upsilon) 
    \in \Yf$,
    we find parametrizations 
    $\Psi \in \para(\YY)$ and $\tilde\Psi \in \para(\tilde\YY)$
    so that
    $\YY$ and $\tilde\YY$ are homomorphic to a common $\JJ = ([0,1], \bar h, \lambda)$
    via $\Psi$ and $\tilde\Psi$.
    On account of \cref{lem:bi-par},
    for each $\pi_i \in \Pio(\YY,\XX_i)$,
    there exists at least one $\bar\pi_i \in \Pio(\JJ,\II_i)$
    such that 
    $\pi_i = (\Psi \times \Phi_i)_\# \bar\pi_i$
    and 
    $\tilde \pi_i 
    \coloneqq (\tilde\Psi \times \Phi_i)_\# \bar\pi_i
    \in \Pio(\tilde \YY, \XX_i)$.
    In analogy to \cref{lem:mul-par},
    for these plans,
    we have 
    \begin{align*}
        \Glue_\YY(\pi_1,\dots,\pi_N) 
        &= 
        (\Psi \times \Phi_1 \times \cdots \times \Phi_N)_\#
        \Glue_\JJ(\bar\pi_1, \dots, \bar\pi_N),
        \\
        \Glue_{\tilde\YY}(\tilde\pi_1,\dots,\tilde\pi_N) 
        &= 
        (\tilde\Psi \times \Phi_1 \times \cdots \times \Phi_N)_\#
        \Glue_\JJ(\bar\pi_1, \dots, \bar\pi_N).
    \end{align*}
    Determining the marginals with respect to $X_\times$ and $I_\times$,
    we thus deduce
    \begin{equation*}
        \Melt_\YY(\pi_1,\dots,\pi_N) 
        = 
        (\Phi_1 \times \cdots \times \Phi_N)_\#
        \Melt_\JJ(\bar\pi_1, \dots, \bar\pi_N)
        =
        \Melt_{\tilde\YY}(\tilde\pi_1,\dots,\tilde\pi_N).
    \end{equation*}
    Taking the union over all optimal plans,
    which are induced by $\Pio(\JJ,\II_i)$,
    we obtain the independence of $\Melto_\Yf$
    from the actual representative $\YY \in \Yf$.
    A similar argument for the representatives of $\Xf_i$
    yields that $\smash{\tildeTB_\rho}$ generates the same homomorphic classes
    for any $\XX_i \in \Xf$.
\end{proof}

Different from the tangential barycenter procedure 
in the previous section,
the elements of $\smash{\tildeTB_\rho}(\Yf)$ are simply based on
the $N$ optimal GW plans and their melting.
For this reason,
the fixpoint iteration 
$\Yf_{n+1} \in \smash{\tildeTB_\rho}(\Yf_n)$
becomes numerically tractable.
Despite the relaxation,
this iteration yields a sequence 
with non-increasing barycentric loss,
which can be seen as an analogue 
to the Wasserstein case 
in \cite[Prop.~3.3]{ABC16fixedpoint}.

\begin{theorem}\label{thm:G_monotone}
    Let $\Xf_1,\dots,\Xf_N,\Yf \in \GM$ and 
    $\rho \in \Delta_{N-1}$
    be fixed.
    Then every $\Zf \in \smash{\tildeTB_\rho}(\Yf)$ satisfies
    \begin{equation*}
    \Fgwb^{\Xf_1,\dots,\Xf_N}(\Yf) 
    \geq 
    \Fgwb^{\Xf_1,\dots,\Xf_N}(\Zf) 
    +
    \GW_2^2(\Yf,\Zf).
    \end{equation*}
    In particular, 
    every barycenter $\Yf$ is a fixpoint of $\tildeTB_\rho$,
    i.e.\ $\Zf = \Yf$.
\end{theorem}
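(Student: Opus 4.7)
The plan is to exploit the variance decomposition for the squared cost together with the structure of the optimal melting.

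First I would choose representatives $\XX_i \coloneqq (X_i, g_i, \xi_i) \in \Xf_i$ and $\YY \coloneqq (Y, h, \upsilon) \in \Yf$, optimal plans $\pi_i \in \Pio(\YY, \XX_i)$, and a gluing $\gamma \in \Gamma_\YY(\pi_1, \dots, \pi_N)$ such that $\Zf = \llbracket (X_\times, m_\rho, \mu) \rrbracket$ with $\mu \coloneqq (P_{X_\times})_\# \gamma$. By optimality of the $\pi_i$,
\[
\Fgwb^{\Xf_1,\dots,\Xf_N}(\Yf)
= \sum_{i=1}^N \rho_i \iint |h(y,y') - g_i(x_i,x_i')|^2 \dx \gamma(y,x_\times) \dx \gamma(y', x_\times').
\]

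Next I would apply the pointwise identity $\sum_i \rho_i (b - a_i)^2 = (b - \bar a)^2 + \sum_i \rho_i (a_i - \bar a)^2$ with $b = h(y,y')$, $a_i = g_i(x_i,x_i')$, and $\bar a = m_\rho(x_\times, x_\times')$. Integrating against $\gamma \otimes \gamma$ splits $\Fgwb^{\Xf_1,\dots,\Xf_N}(\Yf)$ into two contributions. The first contribution is
\[
\iint |h(y,y') - m_\rho(x_\times,x_\times')|^2 \dx \gamma(y,x_\times) \dx \gamma(y',x_\times') = \Fgw^{\YY, \ZZ}(\gamma) \ge \GW_2^2(\Yf, \Zf),
\]
since $\gamma \in \Pi(\YY, \ZZ)$ with $\ZZ \coloneqq (X_\times, m_\rho, \mu)$ by construction of $\mu$.

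For the second contribution, the integrand depends only on $x_\times$ and $x_\times'$, so $\gamma\otimes\gamma$ can be replaced by $\mu\otimes\mu$, giving
\[
\sum_{i=1}^N \rho_i \iint |g_i(x_i,x_i') - m_\rho(x_\times,x_\times')|^2 \dx \mu(x_\times) \dx \mu(x_\times').
\]
Each summand equals $\Fgw^{\ZZ,\XX_i}((\id,P_{X_i})_\# \mu)$ for the deterministic plan $(\id, P_{X_i})_\# \mu \in \Pi(\ZZ, \XX_i)$, which is an upper bound on $\GW_2^2(\Xf_i,\Zf)$. Summing with weights $\rho_i$ yields the second contribution is at least $\Fgwb^{\Xf_1,\dots,\Xf_N}(\Zf)$. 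Combining both bounds gives the claimed inequality. The ``in particular'' part then follows immediately: if $\Yf$ is a barycenter, then $\Fgwb^{\Xf_1,\dots,\Xf_N}(\Yf) \le \Fgwb^{\Xf_1,\dots,\Xf_N}(\Zf)$, which forces $\GW_2^2(\Yf,\Zf) = 0$, hence $\Zf = \Yf$ in $\GM$.

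The only subtle point is the bookkeeping ensuring that the first term is genuinely a valid GW functional between $\YY$ and $\ZZ$; this is where one must be careful that $\mu$ is defined exactly as the $X_\times$-marginal of $\gamma$, so that $\gamma$ qualifies as a transport plan between $\YY$ and $\ZZ$. Once this identification is made, the remainder is a direct variance-decomposition computation requiring no further machinery.
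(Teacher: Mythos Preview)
Your proposal is correct and follows essentially the same approach as the paper: fix representatives and a gluing $\gamma$, apply the pointwise variance decomposition $\sum_i \rho_i (b-a_i)^2 = (b-\bar a)^2 + \sum_i \rho_i(a_i-\bar a)^2$, bound the first term via $\gamma \in \Pi(\YY,\ZZ)$, and bound the second via the deterministic plans $(\id,P_{X_i})_\# \mu \in \Pi(\ZZ,\XX_i)$. The paper's proof is identical in structure and detail, including the final fixpoint remark.
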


Remarkably, the previous result holds
for all 
$\Zf \in \tildeTB_\rho(\Yf)$
where only the gauge of $\Zf$ depends on $\rho$.
Hence, for any arbitray fixed
$\mu \in \Melto_\Yf(\XX_1, \dots, \XX_N)$
and $\Zf 
= \llbracket (X_\times, m_\rho, \mu) \rrbracket$,
the above inequality holds for all
$\rho \in \Delta_{N-1}$.

\begin{proof}
    We consider the representatives 
    $\XX_i \coloneqq (X_i, g_i, \xi_i) \in \Xf_i$
    and 
    $\YY \coloneqq (Y, h, \upsilon) \in \Yf$.
    Since $\Zf \in \smash{\tildeTB_\rho}(\Yf)$,
    there exist 
    $\pi_i \in \Pio(\YY, \XX_i)$
    and
    $\gamma \in \Glue_\YY(\pi_1,\dots,\pi_N)$
    such that
    $\ZZ \coloneqq (X_\times, m_\rho, \mu) \in \Zf$
    with $m_\rho \coloneqq \sum_{i=1}^N g_i$
    and $\mu \coloneqq (P_{X_\times})_\# \gamma$.
    The barycentric loss of $\Yf$ may be written as
    \begin{align*}
        \Fgwb^{\Xf_1,\dots,\Xf_N}(\Yf) 
        &= 
        \sum_{i=1}^N \rho_i 
        \iint_{(Y \times X_i)^2} 
        \lvert h(y,y') - g_i(x_i,x_i') \rvert^2 
        \dx \pi_i(y, x_i) 
        \dx \pi_i(y', x_i') 
        \\
        &= 
        \iint_{(Y \times X^\times)^2}
        \sum_{i=1}^N \rho_i 
        \lvert h(y,y') - g_i(x_i,x_i') \rvert^2 
        \dx \gamma(y, x_\times)
        \dx \gamma(y', x_\times').
    \end{align*}
    Due to
    $\sum_{i=1}^N \rho_i \lvert b - a_i \rvert^2 
    = \lvert b - a \rvert^2 +
    \sum_{i=1}^N \rho_i \lvert a - a_i \rvert^2$
    for $a \coloneqq \sum_{i=1}^N \rho_i a_i$,
    we obtain
    \begin{align*}
        \Fgwb^{\Xf_1,\dots,\Xf_N}(\Yf) 
        &= \iint_{(Y \times X^\times)^2}
        \lvert h(y,y') - m_\rho(x_\times, x_\times') \rvert^2 
        \dx \gamma(y,x_\times) \dx \gamma(y',x_\times')
        \\
        &\quad+
        \sum_{i=1}^N \rho_i
        \iint_{X_\times^2} 
        \lvert m_\rho(x_\times, x_\times') - g_i(x_i,x_i') \rvert^2 
        \dx \mu (x_\times) 
        \dx \mu (x_\times').
    \end{align*}
    We estimate the terms on the right-hand side separably.
    Since $\gamma \in \Pi(\YY, \ZZ)$, 
    it clearly holds
    \begin{equation*}
        \iint_{(Y \times X^\times)^2}
        \lvert h(y,y') - m_\rho(x_\times, x_\times') \rvert^2 
        \dx \gamma(y,x_\times) \dx \gamma(y',x_\times')
        \ge 
        \GW_2^2(\YY,\ZZ)
        =
        \GW_2^2(\Yf,\Zf).
    \end{equation*}
    Defining $\tilde\pi_i \coloneqq (\id, P_{X_i})_\# \mu \in \Pi(\ZZ,\XX_i)$,
    we further estimate the second term by
    \begin{align*}
        &\sum_{i=1}^N \rho_i
        \iint_{X_\times^2} 
        \lvert m_\rho(x_\times, x_\times') - g_i(x_i,x_i') \rvert^2 
        \dx \mu (x_\times) 
        \dx \mu (x_\times')
        \\
        &=
        \sum_{i=1}^N \rho_i 
        \iint_{(X_\times \times X_i)^2} 
        \lvert m_\rho(x_\times, x_\times') - g_i(\tilde x_i,\tilde x_i') \rvert^2 
        \dx \tilde\pi_i (x_\times, \tilde{x}_i)
        \dx \tilde\pi_i (x_\times', \tilde{x}_i')
        \\
        &\ge 
        \sum_{i=1}^N \rho_i \GW_2^2(\ZZ,\XX_i)
        =
        \sum_{i=1}^N \rho_i \GW_2^2(\Zf,\Xf_i)
        = 
        \Fgwb^{\Xf_1,\dots,\Xf_N}(\Zf),
    \end{align*}
    which establishes the assertion.
\end{proof}

Furthermore,
every sequence iteratively generated 
by applying $\tildeTB_\rho$ contains
a convergent sub\-sequence,
whose limit can be interpreted as fixpoint of $\smash{\tildeTB_\rho}$.

\begin{theorem}\label{thm:G_fixpoint}
    Let $\Xf_1,\dots,\Xf_N \in \GM$
    and $\rho \in \Delta_{N-1}$ be fixed.
    Then every subsequence of $(\Yf_n)_{n \in \NN}$
    with $\Yf_{n+1} \in \smash{\tildeTB_\rho(\Yf_n)}$
    contains a convergent subsequence $\Yf_{n_\ell} \to \Yf \in \GM$
    so that 
    $\Yf \in \smash{\tildeTB_\rho}(\Yf)$.
\end{theorem}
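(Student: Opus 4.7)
The plan is to combine the monotone descent from \cref{thm:G_monotone} with weak pre-compactness of multi-marginal plans with fixed marginals. First, since $\Fgwb^{\Xf_1,\dotsc,\Xf_N}(\Yf_n)$ is non-increasing and bounded below by zero, it converges, and the inequality $\GW_2^2(\Yf_n,\Yf_{n+1}) \leq \Fgwb^{\Xf_1,\dotsc,\Xf_N}(\Yf_n) - \Fgwb^{\Xf_1,\dotsc,\Xf_N}(\Yf_{n+1})$ from \cref{thm:G_monotone} yields $\GW_2(\Yf_n,\Yf_{n+1}) \to 0$. By construction of $\tildeTB_\rho$, every $\Yf_n$ with $n \geq 1$ admits the canonical representative $\YY_n \coloneqq (X_\times,m_\rho,\mu_n)$ with $\mu_n \in \Pi(\XX_1,\dotsc,\XX_N)$, so starting from $n=1$ all iterates share the same underlying space and gauge and differ only in the measure $\mu_n$.

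Next I would extract weak limits. Since each $\xi_i$ is tight and $(P_{X_i})_\# \mu_n = \xi_i$, the sequence $(\mu_n)_n$ is tight in $\pmeas(X_\times)$ by Prokhorov's theorem. From an arbitrary subsequence I would successively extract a further subsequence $(n_\ell)$ along which $\mu_{n_\ell} \weakly \mu^*$, $\mu_{n_\ell+1} \weakly \tilde\mu^*$, the plans $\pi_{i,n_\ell} \in \Pio(\YY_{n_\ell},\XX_i)$ converge weakly to $\pi_i^*$, and the gluings $\gamma_{n_\ell} \in \Glue_{\YY_{n_\ell}}(\pi_{1,n_\ell},\dotsc,\pi_{N,n_\ell}) \subset \pmeas(X_\times \times X_\times)$ that produce $\mu_{n_\ell+1} = (P_{X_\times})_\# \gamma_{n_\ell}$ converge weakly to $\gamma^*$. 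Weak continuity of marginal projections then ensures $\mu^*, \tilde\mu^* \in \Pi(\XX_1,\dotsc,\XX_N)$, $\pi_i^* \in \Pi(\YY,\XX_i)$ for $\YY \coloneqq (X_\times,m_\rho,\mu^*)$, and $\gamma^* \in \Glue_\YY(\pi_1^*,\dotsc,\pi_N^*)$ with $(P_{X_\times})_\# \gamma^* = \tilde\mu^*$.

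The crux is passing to the limit in the quadratic GW integrals, which requires a continuity result beyond \cref{lem:multi-cont} since $X_\times$ need not be compact and $m_\rho$ is merely $L^2$. The key observation is that for every $\mu \in \Pi(\XX_1,\dotsc,\XX_N)$ the identity $\lVert g_i \rVert_{L^2(\mu \otimes \mu)} = \lVert g_i \rVert_{L^2(\xi_i \otimes \xi_i)}$ depends only on the fixed marginals. Hence each $g_i$, and therefore $m_\rho$, admits a uniform $L^2(\mu_n \otimes \mu_n)$ approximation by continuous bounded functions, and the strategy of \cref{lem:multi-cont} extends to show that $\gamma \mapsto \iint \lvert m_\rho(x_1,x_2) - m_\rho(y_1,y_2) \rvert^2 \dx \gamma(x_1,y_1) \dx \gamma(x_2,y_2)$ is continuous along weakly convergent sequences in $\pmeas(X_\times \times X_\times)$ whose marginals lie in $\Pi(\XX_1,\dotsc,\XX_N)$. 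Applying this along $\gamma_{n_\ell}$ gives $\Fgw^{\YY,\tilde\YY}(\gamma^*) = \lim_\ell \Fgw^{\YY_{n_\ell},\YY_{n_\ell+1}}(\gamma_{n_\ell}) = 0$ with $\tilde\YY \coloneqq (X_\times,m_\rho,\tilde\mu^*)$. Thus $\YY \simeq \tilde\YY$ represent a single class $\Yf \in \GM$, and the couplings $\gamma_{n_\ell}$ themselves witness $\GW_2(\Yf_{n_\ell},\Yf) \to 0$.

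For the fixpoint assertion, the same continuity tool applied to $\Fgw^{\YY_{n_\ell},\XX_i}(\pi_{i,n_\ell}) = \GW_2^2(\Yf_{n_\ell},\Xf_i) \to \GW_2^2(\Yf,\Xf_i)$ yields $\pi_i^* \in \Pio(\YY,\XX_i)$. Since $\gamma^*$ glues these optimal plans along $\YY$ with $X_\times$-marginal $\tilde\mu^*$, and $\llbracket(X_\times,m_\rho,\tilde\mu^*)\rrbracket = \Yf$, we conclude $\Yf \in \tildeTB_\rho(\Yf)$. The main obstacle is the continuity step itself: without the fixed-marginal structure, quadratic integrals against $\gamma \otimes \gamma$ need not survive weak convergence. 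The uniform $L^2$-approximation above, combined with the weak continuity of bi-marginal projections of $\gamma \otimes \gamma$ (in the spirit of \cite[Lem.~A.1]{BBS2022multi}), is what salvages the limit argument.
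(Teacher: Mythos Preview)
Your overall strategy matches the paper's: extract weakly convergent gluings, pass to the limit in the quadratic GW functionals via a continuity lemma, and combine with the monotone descent from \cref{thm:G_monotone}. The chief difference is that the paper first reparametrizes every $\XX_i$ over $[0,1]$ (cf.\ \cref{lem:mul-par}), so all measures live on the compact cube $[0,1]^N$, tightness is automatic, and \cref{lem:multi-cont} applies verbatim; the paper then packages the needed continuity and stability as \cref{lem:continuity_on_box} and \cref{lem:stability_on_box}. You instead stay on the original Polish spaces and argue that \cref{lem:multi-cont} extends because the $L^2$-approximation of each $g_i$ by bounded continuous functions is uniform over all couplings with the fixed $\xi_i$-marginals. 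That observation is correct and the extension is valid; it trades the parametrization machinery for a direct but slightly more delicate continuity argument.

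There are two slips. First, from the \emph{statement} of \cref{thm:G_monotone} you only get $\GW_2(\Yf_n,\Yf_{n+1}) \to 0$, yet you then assert $\Fgw^{\YY_{n_\ell},\YY_{n_\ell+1}}(\gamma_{n_\ell}) \to 0$; since $\gamma_{n_\ell}$ need not be GW-optimal this does not follow. You need the sharper estimate from the \emph{proof} of \cref{thm:G_monotone}: once $\YY_n = (X_\times, m_\rho, \mu_n)$, the first term in that decomposition equals $\bigl(\Fgw^{\YY_n,\YY_{n+1}}(\gamma_n)\bigr)^2$, whence $\bigl(\Fgw^{\YY_n,\YY_{n+1}}(\gamma_n)\bigr)^2 \le \Fgwb(\Yf_n) - \Fgwb(\Yf_{n+1}) \to 0$. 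Second, $\gamma_{n_\ell}$ couples $\mu_{n_\ell}$ with $\mu_{n_\ell+1}$, not with $\mu^*$, so it cannot witness $\GW_2(\Yf_{n_\ell},\Yf) \to 0$. You still owe the implication $\mu_{n_\ell} \weakly \mu^* \Rightarrow \Yf_{n_\ell} \to \Yf$ in $\GM$, which the paper isolates as \cref{lem:continuity_on_box}. Your continuity tool does deliver it---take couplings $\sigma_\ell \in \Pi(\mu_{n_\ell},\mu^*)$ with $\sigma_\ell \weakly (\id,\id)_\# \mu^*$ and conclude $\Fgw^{\YY_{n_\ell},\YY}(\sigma_\ell) \to 0$---but without this step your optimality claim for $\pi_i^*$ becomes circular, since $\GW_2(\Yf_{n_\ell},\Xf_i) \to \GW_2(\Yf,\Xf_i)$ already presupposes $\Yf_{n_\ell} \to \Yf$.
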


To give the proof, 
we require the following two lemmata
about the continuity and stability 
of the GW transport on the box.
For this,
let $I_\times \coloneqq \times_{i=1}^M I_i$ be the unit cube 
with $I_i \coloneqq [0,1]$,
and consider the measures with uniform marginals given by
$\Beta_M \coloneqq \{ \eta \in \pmeas(I_\times) : (P_{I_i})_\# \eta = \lambda \}$.
We equip $\Beta_M$ with the weak convergence from $\pmeas(I_\times)$.

\begin{lemma}\label{lem:continuity_on_box}
    For fixed $h_i \in \gauges(I_i,\lambda)$,
    consider $\Yf_n \coloneqq \llbracket(I_\times, h, \upsilon_n)\rrbracket$
    with $h \coloneqq \sum_{i=1}^M h_i$
    and $\upsilon_n \in \Beta_M$. 
    If $\upsilon_n \weakly \upsilon \in \Beta_M$, 
    then 
    $\Yf_n \to \Yf 
    \coloneqq \llbracket(I_\times, h, \upsilon)\rrbracket 
    \in \GM$ as $n \to \infty$.
\end{lemma}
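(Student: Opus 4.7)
The plan is to construct couplings $\pi_n \in \Pi(\upsilon_n, \upsilon)$ with $\Fgw^{\YY_n,\YY}(\pi_n) \to 0$, where $\YY_n \coloneqq (I_\times, h, \upsilon_n)$ and $\YY \coloneqq (I_\times, h, \upsilon)$; then $\GW(\Yf_n, \Yf) \le \Fgw^{\YY_n,\YY}(\pi_n)$ delivers the claim. Since $I_\times$ is a compact Polish space and $\upsilon_n \weakly \upsilon$, Skorokhod's representation theorem provides $\pi_n \in \Pi(\upsilon_n, \upsilon)$ concentrating on the diagonal of $I_\times \times I_\times$, namely $\pi_n \weakly (\id, \id)_\# \upsilon$. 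Denoting the bi-marginal of $\pi_n$ on the $i$-th source and $i$-th target coordinate by $\mu_i^n \in \pmeas(I_i \times I_i)$, both marginals of $\mu_i^n$ equal $\lambda$ (because $\upsilon_n, \upsilon \in \Beta_M$) and the continuous mapping theorem yields $\mu_i^n \weakly (\id, \id)_\# \lambda$.

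Splitting $h = \sum_{i=1}^M h_i$ and applying the triangle inequality in $L^2_\sym(\pi_n \otimes \pi_n)$ reduces the task to showing, for each $i$,
\[
E_i^n
\coloneqq
\bigl\|h_i(\cdot_1,\cdot_3) - h_i(\cdot_2,\cdot_4)\bigr\|^2_{L^2_\sym(\mu_i^n \otimes \mu_i^n)}
\to 0.
\]
I treat $E_i^n$ exactly as in the proof of \cref{lem:multi-cont}: approximate $h_i$ by a symmetric continuous $h_{i,\eps} \in C_\sym(I_i \times I_i)$ with $\|h_i - h_{i,\eps}\|_{L^2(\lambda \otimes \lambda)} \le \eps$, and split $\sqrt{E_i^n}$ into two approximation errors and one continuous middle term via a further triangle inequality. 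The approximation errors depend only on the source or only on the target pair of coordinates, on which $\mu_i^n \otimes \mu_i^n$ marginalises to $\lambda \otimes \lambda$; hence each is at most $\eps$ uniformly in $n$. The continuous middle term vanishes in the limit: by \cite[Lem~A.1]{BBS2022multi} one has $\mu_i^n \otimes \mu_i^n \weakly (\id, \id)_\# \lambda \otimes (\id, \id)_\# \lambda$, while the integrand $|h_{i,\eps}(\cdot_1, \cdot_3) - h_{i,\eps}(\cdot_2, \cdot_4)|^2$ is identically zero on this diagonal product limit. Sending $\eps \to 0$ yields $E_i^n \to 0$, hence $\Fgw^{\YY_n, \YY}(\pi_n) \to 0$.

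The main obstacle is the mere $L^2$ regularity of the gauges: the weak convergence $\mu_i^n \weakly (\id, \id)_\# \lambda$ alone does not upgrade to convergence of the squared discrepancy $E_i^n$. The remedy combines Skorokhod's construction, which ensures the bi-marginals concentrate on the diagonal, with the density of $C_\sym(I_i \times I_i)$ in $L^2_\sym(\lambda \otimes \lambda)$, already exploited in \cref{lem:multi-cont}. Compared to the two-marginal continuity \cite[Lem~5.5]{sturm2020space}, the only new ingredient is the initial triangle inequality handling the sum structure $h = \sum_i h_i$; no additional compactness or regularity of $h$ is needed.
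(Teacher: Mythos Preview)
Your proof is correct and follows essentially the same strategy as the paper's: construct couplings $\pi_n \in \Pi(\upsilon_n,\upsilon)$ converging weakly to the diagonal $(\id,\id)_\#\upsilon$, then use continuous approximation of the $L^2$ gauges to conclude $\Fgw^{\YY_n,\YY}(\pi_n)\to 0$. The only differences are in packaging: the paper obtains the diagonal-convergent couplings via Wasserstein stability \cite[Thm~5.20]{villani2008optimal} (along subsequences) rather than Skorokhod, and then observes that $\pi_n\in\Beta_{2M}$ so that \cref{lem:multi-cont} applies directly to $\Xi(\pi_n)=\lVert h(\cdot_1,\cdot_3)-h(\cdot_2,\cdot_4)\rVert_{L^2_\sym(\pi_n\otimes\pi_n)}$ as a black box, whereas you split via the triangle inequality, project to the two-dimensional bi-marginals $\mu_i^n$, and re-run the continuous-approximation argument per coordinate. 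Your Skorokhod construction is slightly more direct since it avoids the subsequence extraction; conversely, the paper's invocation of \cref{lem:multi-cont} on $\Beta_{2M}$ is shorter since the coordinatewise decomposition and the $\eps$-approximation are already absorbed there.
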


\begin{proof}
    Consider the representatives 
    $\YY_n \coloneqq (I_\times, h, \upsilon_n)$
    and
    $\YY \coloneqq (I_\times, h, \upsilon)$.
    As a direct consequence of \cite[Thm.~5.20]{villani2008optimal}, 
    for every subsequence of $(\YY_n)_{n\in\NN}$,
    we can find a further subsequence $(\YY_{n_\ell})_{\ell\in\NN}$
    and plans $\pi_{n_\ell} \in \Pi(\YY_{n_\ell},\YY)$ 
    such that 
    $\pi_{n_\ell} \weakly \pi \coloneqq (\id,\id)_\# \upsilon$. 
    [The plans can be constructed
    by considering the Wasserstein distance on $\pmeas(I_\times)$.]
    Since the gauges of $\YY_{n_\ell}$ and $\YY$ coincide,
    both GW functionals \smash{$\Fgw^{\smash{\YY_{n_\ell}},\YY}$}
    and \smash{$\Fgw^{\YY,\YY}$} become
    $\Xi(\bullet) 
    \coloneqq 
    \lVert 
    h(\cdot_1,\cdot_3) - h(\cdot_2,\cdot_4) 
    \rVert_{L^2_\sym(\bullet \otimes \bullet)}$.
    As $\pi_{n_\ell} \in \Beta_{2M}$,
    \cref{lem:multi-cont} implies
    \begin{equation*}
    \GW(\YY_{n_\ell},\YY)
    \leq \Xi(\pi_{n_\ell}) 
    \to \Xi(\pi) = 0
    \quad\text{as}\quad
    \ell\to\infty.
    \end{equation*}
    Since this holds true for all subsequences,
    we obtain the assertion as desired.
\end{proof}

To distinguish the different boxes,
we define 
$I_{1,\times} \coloneqq \bigtimes_{i=1}^{M_1} I_i$
and $I_{2,\times} \coloneqq \bigtimes_{i=1}^{M_2} I_i$
with $I_i \coloneqq [0,1]$.

\begin{lemma}\label{lem:stability_on_box}
    Consider
    $\YY_n \coloneqq (I_{1,\times}, h, \upsilon_n)$
    with $h \coloneqq \sum_{i=1}^{M_1} h_i$,
    $h_i \in \gauges(I_i, \lambda)$,
    and $\upsilon_n \in \Beta_{M_1}$,
    and let $\XX = (I_{2,\times},g,\xi)$
    be given
    with \smash{$g \coloneqq \sum_{i=1}^{M_2} g_i$},
    $g_i \in \gauges(I_i, \lambda)$,
    and $\xi \in \Beta_{M_2}$.
    Furthermore,
    let $\pi_n \in \Pio(\YY_n,\XX)$,
    and assume $\pi_n \weakly \pi \in \Beta_{M_1+M_2}$.
    Then
    $\pi$ is optimal between 
    $\YY \coloneqq (I_{1,\times},h,\upsilon)$
    with $\upsilon \coloneqq (P_{I_{1,\times}})_\# \pi$
    and $\XX$,
    i.e.\
    $\pi \in \Pio(\YY,\XX)$.
\end{lemma}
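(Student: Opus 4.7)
The plan is to show that $\pi$ is optimal between $\YY$ and $\XX$ by comparing $F_{\GW}^{\YY,\XX}(\pi)$ with an arbitrary competitor $\tilde\pi \in \Pi(\YY,\XX)$. The competitor will be transported to the level of $\Pi(\YY_n,\XX)$ via meltings with near-diagonal couplings $\sigma_n \in \Pio(\YY_n,\YY)$, after which optimality of $\pi_n$ can be pushed through a weak-limit argument. Three observations set the stage: first, $\pi \in \Pi(\YY,\XX)$, since the $I_{1,\times}$-marginal is $\upsilon$ by definition and weak continuity of projections gives $(P_{I_{2,\times}})_\#\pi = \lim_n (P_{I_{2,\times}})_\#\pi_n = \xi$; second, because $\upsilon_n = (P_{I_{1,\times}})_\# \pi_n \weakly (P_{I_{1,\times}})_\# \pi = \upsilon$, \cref{lem:continuity_on_box} yields $\GW(\YY_n,\YY) \to 0$; third, since the gauges $h$ and $g$ are fixed across $n$, both $F_{\GW}^{\YY_n,\XX}$ and $F_{\GW}^{\YY,\XX}$ coincide as functionals on $\Beta_{M_1+M_2}$, and \cref{lem:multi-cont} gives $F_{\GW}^{\YY_n,\XX}(\pi_n) \to F_{\GW}^{\YY,\XX}(\pi)$.

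Next, for an arbitrary competitor $\tilde\pi \in \Pi(\YY,\XX)$, I would choose optimal couplings $\sigma_n \in \Pio(\YY_n,\YY)$, fix gluings $\gamma_n \in \Glue_\YY(\sigma_n,\tilde\pi) \subset \pmeas(I_{1,\times}^2 \times I_{2,\times})$, and define the meltings $\tilde\pi_n \coloneqq (P_{I_{1,\times}\times I_{2,\times}})_\# \gamma_n \in \Pi(\YY_n,\XX)$. Optimality of $\pi_n$ then gives $F_{\GW}^{\YY_n,\XX}(\pi_n) \le F_{\GW}^{\YY_n,\XX}(\tilde\pi_n)$. By tightness of $(\gamma_n)_{n\in\NN}$ on the compact cube, I pass to a weakly convergent subsequence $\gamma_{n_k} \weakly \gamma$, with marginals $\sigma \coloneqq (P_{I_{1,\times}^2})_\# \gamma \in \Pi(\YY,\YY)$ and $\tilde\pi_\infty \coloneqq (P_{I_{1,\times} \times I_{2,\times}})_\# \gamma \in \Pi(\YY,\XX)$. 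Two more applications of \cref{lem:multi-cont} (available since every involved measure lies in the appropriate $\Beta_M$) give $F_{\GW}^{\YY_{n_k},\XX}(\tilde\pi_{n_k}) \to F_{\GW}^{\YY,\XX}(\tilde\pi_\infty)$ and $F_{\GW}^{\YY,\YY}(\sigma) = \lim_k F_{\GW}^{\YY_{n_k},\YY}(\sigma_{n_k}) = \lim_k \GW(\YY_{n_k},\YY) = 0$.

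The delicate step, and the main obstacle, is then to identify $F_{\GW}^{\YY,\XX}(\tilde\pi_\infty) = F_{\GW}^{\YY,\XX}(\tilde\pi)$. From $F_{\GW}^{\YY,\YY}(\sigma) = 0$ I would deduce $h(\tilde y,\tilde y') = h(y,y')$ almost everywhere with respect to $\sigma(\tilde y,y) \otimes \sigma(\tilde y',y')$, and hence also almost everywhere with respect to $\gamma(\tilde y,y,x) \otimes \gamma(\tilde y',y',x')$ after projection. Expressing both $F_{\GW}^{\YY,\XX}(\tilde\pi_\infty)$ (integrand $\lvert h(\tilde y,\tilde y') - g(x,x')\rvert^2$) and $F_{\GW}^{\YY,\XX}(\tilde\pi)$ (integrand $\lvert h(y,y') - g(x,x')\rvert^2$) as double integrals against $\gamma \otimes \gamma$, the a.e.\ identity of the two integrands yields equality.

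Chaining the three inequalities,
\[
F_{\GW}^{\YY,\XX}(\pi) = \lim_k F_{\GW}^{\YY_{n_k},\XX}(\pi_{n_k})
\le \lim_k F_{\GW}^{\YY_{n_k},\XX}(\tilde\pi_{n_k})
= F_{\GW}^{\YY,\XX}(\tilde\pi_\infty)
= F_{\GW}^{\YY,\XX}(\tilde\pi),
\]
and since $\tilde\pi \in \Pi(\YY,\XX)$ was arbitrary, optimality of $\pi$ follows. The reasoning is essentially a gluing-based transfer of a stability argument from \cref{lem:melting_remains_optimal,prop:sim_eq_continuous} to the setting where only the measure marginal is perturbed; the technical heart lies in making the substitution $h(\tilde y,\tilde y') \leftrightarrow h(y,y')$ rigorous on the three-factor space, for which the vanishing of $F_{\GW}^{\YY,\YY}(\sigma)$ together with the double integration against $\gamma \otimes \gamma$ is the crucial ingredient.
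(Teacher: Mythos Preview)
Your argument is correct, but it takes a longer route than the paper. You share with the paper the two opening observations: $F_{\GW}^{\YY_n,\XX}(\pi_n)\to F_{\GW}^{\YY,\XX}(\pi)$ via \cref{lem:multi-cont}, and $\GW(\YY_n,\YY)\to 0$ via \cref{lem:continuity_on_box}. At that point the paper simply invokes that $\GW$ is a \emph{metric}: the reverse triangle inequality gives $\lvert\GW(\YY_n,\XX)-\GW(\YY,\XX)\rvert\le\GW(\YY_n,\YY)\to 0$, and since $F_{\GW}^{\YY_n,\XX}(\pi_n)=\GW(\YY_n,\XX)$ by optimality, one obtains $F_{\GW}^{\YY,\XX}(\pi)=\GW(\YY,\XX)$ in one line. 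Your approach instead \emph{reconstructs} this continuity by hand: you transport an arbitrary competitor $\tilde\pi$ back to $\Pi(\YY_n,\XX)$ via meltings along $\sigma_n\in\Pio(\YY_n,\YY)$, extract a weakly convergent gluing subsequence, and identify $F_{\GW}^{\YY,\XX}(\tilde\pi_\infty)=F_{\GW}^{\YY,\XX}(\tilde\pi)$ through the a.e.\ identity $h(\tilde y,\tilde y')=h(y,y')$ under $\gamma\otimes\gamma$. This is essentially the melting machinery behind \cref{lem:melting_remains_optimal}, and it works, but the paper's one-line metric argument makes it unnecessary. What your route buys is a self-contained argument that never appeals to the triangle inequality for $\GW$; what it costs is the whole gluing/subsequence apparatus. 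One small notational wrinkle: with the paper's convention $\Glue_\YY(\sigma_n,\tilde\pi)$ requires $\sigma_n\in\Pi(\YY,\cdot)$, so you should write $\sigma_n^\tT$ (or reverse the order) when gluing along $\YY$.
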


\begin{proof}
    Defining
    $\Xi(\eta) 
    \coloneqq 
    \lVert h - g \rVert_{L^2_\sym(\eta \otimes \eta)}$
    for all $\eta \in \Beta_{M_1 + M_2}$,
    we notice that $\Xi$ coincides 
    with the GW functional \smash{$\Fgw^{\YY_n,\XX}$}
    on $\Pi(\YY_n,\XX) \subset \Beta_{M_1+M_2}$
    and with \smash{$\Fgw^{\YY,\XX}$}
    on $\Pi(\YY,\XX) \subset \Beta_{M_1+M_2}$.
    Hence,
    we have
    \begin{equation}\label{eq:proof_1_stability_on_box}
        \lvert \Fgw^{\YY,\XX}(\pi) - \GW(\YY,\XX) \rvert 
        \leq 
        \lvert \Xi(\pi) - \Xi(\pi_n) \rvert
        +
        \lvert \Xi(\pi_n) - \GW(\YY,\XX)\rvert.
    \end{equation}
    By \cref{lem:multi-cont}, 
    the first term converges to zero.
    Furthermore,
    the weak convergence of $\pi_n$ implies
    $\upsilon_n \weakly \upsilon$,
    which yields 
    $\llbracket \YY_n \rrbracket 
    \to \llbracket \YY \rrbracket
    \in \GM$
    due to \cref{lem:continuity_on_box}. 
    Since $\pi_n$ is optimal,
    we have $\Xi(\pi_n) = \GW(\YY_n,\XX)$;
    so the second term 
    in \cref{eq:proof_1_stability_on_box} 
    converges to zero
    by the continuity of the metric.
    This shows $\Fgw^{\YY,\XX}(\pi) = \GW(\YY,\XX)$ 
    and thus $\pi \in \Pio(\YY,\XX)$ as desired.
\end{proof}

\begin{proof}[Proof of \cref{thm:G_fixpoint}]
    Without loss of generality,
    we consider the representatives 
    $\XX_i \coloneqq (I_i,g_i,\lambda)$,
    where $I_i \coloneqq [0,1]$.
    As before,
    the related box is denoted by 
    $I_\times \coloneqq \bigtimes_{i=1}^N I_i$,
    and the mean gauge as
    $m_\rho(x_\times,x_\times') \coloneqq \sum_{i=1}^N \rho_i g_i(x_i, x_i')$.
    Except for the starting space $\Yf_1$,
    the elements of $(\Yf_n)_{n \in \NN}$
    are defined as
    $\Yf_n = \llbracket \YY_n \rrbracket$
    with $\YY_n \coloneqq (I_\times, m_\rho , \mu_n)$
    for some melting $\mu_n \in \Melt_{\Yf_{n-1}}(\XX_1,\dots,\XX_N) 
    \subset \pmeas(I_\times)$.
    Let $\gamma_n$, $n \in \NN$,
    be the associated gluings.
    For $n > 2$,
    we have
    $\gamma_n \in \pmeas(I_\times \times I_\times)$
    with first marginal $\mu_{n-1}$ and 
    second marginal $\mu_n$.
    Since $I_\times \times I_\times$ is compact,
    the sequence $\gamma_n$ is tight,
    and any subsequence contains 
    a weakly convergent subsequence $\gamma_{n_\ell}$
    with some limit $\gamma \in \pmeas(I_\times \times I_\times)$.
    Due to \cref{lem:continuity_on_box},
    we have $\Yf_{n_\ell} \to \tilde \Yf \coloneqq \llbracket\tilde\YY\rrbracket$
    with $\tilde\YY \coloneqq (I_\times, m, \tilde \mu)$,
    where $\tilde\mu$ is the second marginal of $\gamma$,
    and $\Yf_{n_\ell - 1} \to \Yf \coloneqq \llbracket\YY\rrbracket$
    with $\YY \coloneqq (I_\times, m, \mu)$,
    where $\mu$ denotes the first marginal of $\gamma$.
    Since $(P_{I_\times \times I_i})_\# \gamma_{n_\ell} 
    \in \Pio(\YY_{n_\ell - 1}, \XX_i)$,
    \cref{lem:stability_on_box} ensures the optimality of
    $(P_{I_\times \times I_i})_\# \gamma 
    \in \Pio(\YY, \XX_i)$;
    so we have $\tilde\mu \in \Melt_{\Yf}(\XX_1,\dots,\XX_N)$
    and $\tilde\Yf \in \smash{\tildeTB_\rho(\Yf)}$.
    Furthermore,
    \cref{thm:G_monotone} yields
    \begin{equation*}
        \Fgwb^{\Xf_1,\dots,\Xf_N}(\Yf_{n_\ell-1}) 
        \geq 
        \Fgwb^{\Xf_1,\dots,\Xf_N}(\Yf_{n_\ell}) 
        + \GW_2^2(\Yf_{n_\ell-1}, \Yf_{n_\ell}).
    \end{equation*}
    Taking the limit over $\ell$,
    and exploiting that
    $(\Fgwb(\Yf_n))_{n \in \NN}$ is monotonically decreasing 
    and bounded from below,
    we obtain $\GW(\Yf, \tilde\Yf) = 0$,
    establishing the assertion.
\end{proof}

\section{Algorithm and Practical Usage}\label{sec:7}
\label{sec:in_practice}

\subsection{Explicit Implementation of the 
Tangential Barycenter Method}

The numerical fixpoint iteration 
with respect to $\tildeTB_\rho$ 
is summarized in \cref{alg:1}.
Comments on the GW step 
and on the gluing-melting step can be found below.
\begin{algorithm}
	\begin{algorithmic}
	    \State \textbf{Input:} 
	    \parbox[t]{300pt}{gm-spaces $\XX_i \coloneqq (X_i,g_i,\xi_i)$, $i=1,\dots,N$,
        \\
        barycentric coordinates $\rho \in \Delta_{N-1}$,\\
	    initial gm-space $\YY \coloneqq (Y,h,\upsilon)$
        }
	    \While{not converged}
            \State \emph{GW Step:} 
            Compute optimal GW plans 
            $\hat{\pi}_i$ between $\YY$ and $\XX_i$,
            $i = 1,\dotsc,N$.
            \State \emph{Gluing-Melting Step:}
            Construct a gluing 
            $\gamma \in 
            \Gamma_\YY
            (\hat{\pi}_1,\dotsc,\hat{\pi}_N)$ 
            with melting 
            $\mu \coloneqq P_{X_\times \#} \gamma$.
            \State Set 
            $\YY \gets (X_\times, \sum_{i=1}^N \rho_i g_i, \mu)$.
	    \EndWhile
     	\State \textbf{Output:} Approximate barycenter $\YY = (X_\times, \sum_{i=1}^N \rho_i g_i, \mu)$
	\end{algorithmic}
	\caption{Tangential Barycenter Algorithm}
	\label{alg:1}
\end{algorithm}
Choosing for instance
$\YY = \XX_{1}$
as the initial gm-space
slightly reduces the computation of the algorithm 
in the first iteration
since we may simply set 
$\hat{\pi}_1 \coloneqq (\id,\id)_\# \xi_1$
instead of computing the plan.
Moreover, 
if we are in the case $N=2$, 
for any computed $\hat{\pi}_2 \in \Pio(\YY,\XX_2)$
it holds
\[
\Gamma_{\YY}((\id,\id)_\# \xi_1,\hat{\pi}_2) 
= \{(P_{Y},P_Y,P_{X_2})_\# \pi_2\},
\quad
\text{ which yields }
\quad
\Melt_{\XX_{1}}((\id,\id)_\# \xi_1, \hat{\pi}_2) 
= \{\hat{\pi}_2\}.
\]
Thus, after the first iteration $\YY$ has the form
\begin{equation*}
\YY = 
\bigl(X_1 \times X_2, 
\rho_1 g_1 + \rho_2 g_2, 
\hat{\pi}_2\bigr),
\end{equation*}
which is already a solution to $\GWB_{(\rho_1,\rho_2)}(\XX_1,\XX_2)$,
cf.\ \cref{thm:gen-bary}.
In other words, \cref{alg:1} 
reaches optimality already after one iteration. 
The same holds true for
$N$ gm-spaces given as
centered Gaussian distributions 
endowed with the standard Euclidean scalar product.
Indeed, 
consider the setting from \cref{thm:gaussian}, 
i.e. let $d_1 \geq \dotsc \geq d_N$ and
$\XX_i = 
(\R^{d_i},\langle \cdot, \cdot \rangle_{d_i},\xi_i)$, 
with $\xi_i = \N(0,\Sigma_i)$.
Taking $\YY = \XX_1$,
we may set
$\hat{\pi}_i = (\id,T_i)_\# \xi_1 \in \Pio(\XX_1,\XX_i)$
in the GW step,
where $T_i$ is defined as in \cref{thm:gaussian}.
This yields
\begin{equation*}
    \Gamma_\YY(\pi_1,\dotsc,\pi_N)
    =
    \{(\id,T_1,\dotsc,T_N)_\# \xi_1\}
    \quad
    \text{so that}
    \quad
    \Melt_\YY(\pi_1,\dotsc,\pi_N)
    =
    \{(T_1,\dotsc,T_N)_\# \xi_1\}.
\end{equation*}
Due to \cref{thm:gaussian}
we have 
$(T_1,\dotsc,T_N)_\# \xi_1 
\in \Pio^\rho(\XX_1,\dotsc,\XX_N)$,
so that optimality is reached 
after one iteration.

In the following, 
we provide some further insight on
the practical application of \cref{alg:1},
discuss the recovery of embeddings
of the barycenters
and show how an approximation 
of pairwise distances
of the input spaces can be obtained.

\paragraph{The GW Step}
At the heart of the iterations in \cref{alg:1}
are the GW computations between $\YY = (Y,h,\upsilon)$ and $\XX_i = (X_i,g_i,\xi_i)$,
hence we comment on existing methods 
to compute $\GW$ which we use
in our numerical section below.
For a bigger picture, see e.g.\ \cite{PCS2016,xu2019scalable}.
One way to numerically approximate solutions 
of $\GW(\YY,\XX_i)$
is by performing a block-coordinate descent 
of the bi-convex relaxed problem
\begin{equation}\label{eq:biconvex_relax}
\inf_{\pi,\gamma \in \Pi(\upsilon,\xi_i)}
\|h(\cdot_1,\cdot_3) - g_i(\cdot_2,\cdot_4)\|^2
    _{L^2_\sym(\pi(\cdot_1,\cdot_2) \otimes \gamma(\cdot_3,\cdot_4))}.
\end{equation}
Fixing, say, $\gamma$ and minimizing with respect to $\pi$,
we recover the Kantorovich problem
\begin{equation}\label{eq:local_problem}
\inf_{\pi \in \Pi(\upsilon,\xi_i)} \int_{Y \times X_i}
\|h(y,\cdot_3) - g_i(x,\cdot_4)\|^2
    _{L^2_\sym(\gamma(\cdot_3,\cdot_4))} \dx \pi(y,x).
\end{equation}
Most practical algorithms for $\GW$ rely on this structure,
i.e.\ for a fixed initial guess 
$\gamma \in \Pi(\upsilon,\xi_i)$ 
solve \cref{eq:local_problem}, 
then set $\gamma = \pi$ 
and repeat this iteration 
until a convergence criterion is satisfied.
The local problem may be solved directly,
e.g.\ with the network simplex algorithm.
Another option is to regularize \cref{eq:local_problem}
by adding an entropic regularizer like
$+ \eps \KL(\pi,\upsilon \otimes \xi_i)$,
or
$+ \eps \KL(\pi,\gamma)$
to the functional,
where $\KL$ denotes the Kullback--Leibler divergence.
In both cases, 
this allows to leverage 
the Sinkhorn algorithm \cite{C2013} 
for solving the local problem 
at the cost of 
the usual entropic blurring.

\paragraph{The Gluing-Melting Step}
Let $\YY \coloneqq (Y,h,\upsilon)$, 
$\XX_i \coloneqq (X_i,g_i,\xi_i)$ 
be gm-spaces
with support sizes $m,n_i \in \NN$, $i = 1,\dotsc,N$.
Solutions that are obtained in the above way
without regularization are naturally very sparse.
More precisely, there exists
a solution 
$\pi_i \in \Pi(\upsilon,\xi_i)$
to \cref{eq:local_problem} with
\[
\lvert \supp(\pi_i) \rvert \leq m + n_i - 1 \ll n_i m,
\]
see e.g.\ \cite[Thm.~1]{friesecke2023gencol}.
In an effort to obtain a sparse gluing and
keep computational complexity low,
we propose to use the \emph{north-west corner rule},
see e.g.\ \cite[Sec.~3.4.2]{PC19book} for details.
This generates a gluing 
$\gamma \in \Gamma_{\YY}(\pi_1,\dotsc,\pi_N)$ 
in up to $m + \sum_{i=1}^N n_i$ operations 
and ensures 
$\lvert \supp(\gamma) \rvert 
\leq m + \bigl(\sum_{i=1}^N n_i\bigr) - N$.
Using this construction, 
after $k$ iterations of \cref{alg:1},
the support size of $\YY$ can be bounded by
\[
m + k\biggl(\sum_{i=1}^N n_i\biggr) - kN.
\]
In contrast, for any approximate solution $\pi_i$ 
of the regularized version of \cref{eq:local_problem}, 
it holds $\lvert \supp(\pi_i) \rvert = n_i m$, 
$i =1,\dotsc,N$.
In this case
$\lvert \supp (\gamma) \rvert = m \prod_{i=1}^N n_i$
and thus $\lvert \supp (\mu) \rvert = \prod_{i=1}^N n_i$
for any gluing $\gamma \in \Gamma_{\YY}(\pi_1,\dotsc,\pi_N)$
and associated melting $\mu \in \Melt_{\YY}(\pi_1,\dotsc,\pi_N)$
which practically inhibits 
the use entropic regularizers.
However, if additionally 
$\lvert X_i \rvert = m$
and
$\xi_i$ is the uniform distribution on $X_i$,
$i =1,\dotsc,N$,
we propose the following alternative construction 
for the gluing-melting step.
First, we initialize \cref{alg:1} 
with $\YY = (Y,h,\upsilon)$ such that
$\lvert Y \rvert = m$.
After solving the GW step 
with entropic regularization
to obtain measures $\pi_i \in \Pi(\YY,\XX_i)$,
we do the following.
Iterating over $\ell = 1,\dotsc,m$, we recursively construct
\[
T_i(x^{(i)}_\ell) = \argmax \left\{\frac{\pi_i(\{(y,x^{i}_\ell)\})}{\nu(\{y\})} : y \in Y,
T_i(x^{(i)}_{\tilde{\ell}}) \neq y,\, \tilde{\ell} = 1,\dotsc,\ell-1 \right\}, \quad i =1,\dotsc,N.
\]
By construction, 
the obtained maps $T_i: X_i \to Y$ are invertible,
so we may replace the gluing-melting step by instead setting
\[
\tilde{\gamma} = (\id,T_1^{-1},\dotsc,T_N^{-1})_\# \upsilon
\quad \text{and} \quad \tilde{\mu} = P_{X_\times \#} \tilde{\gamma}.
\]
Note that $\tilde{\gamma}$ and $\tilde{\mu}$
does not have to be an actual gluing
and melting, respectively.
The construction ensures 
$\lvert \supp(\tilde{\mu}) \rvert = m$ so that
we may readily apply this procedure again
in the next iteration.
Figuratively,
the multi-marginal plan $\tilde{\mu}$
inscribes one-to-one correspondences 
between the inputs
by associating their points, 
if most of their respective masses is transported 
to the same point in $\YY$.
In our practical application we refer to this procedure as the
as a \emph{maximum rule without replacement}.
Since $\xi_i$ is the uniform distribution on $m$ points 
for all $i=1,\dotsc,N$, 
we obtain
$\tilde{\mu} 
\in \Pi(\XX_1,\dotsc,\XX_N)$.
The maximum rule without replacement is particularly well suited when we are interested
in exact one-to-one correspondences of the given inputs.

\subsection{Embeddings for Barycenters}\label{subsec:embedding}

For certain practical tasks,
an embedding of the $\GW$ interpolants may be required.
However, finding an appropriate embedding 
can be a very challenging task on its own and is 
a common drawback for existing methods
such as the state-of-the-art algorithm 
provided by Peyré, Cuturi and Solomon \cite{PCS2016}.
Here, the output of the function 
takes the form of a dissimilarity matrix
which is usually embedded using some
numerically expensive distance-preserving embedding technique 
like multi-dimensional scaling.
A remedy to this has been proposed in \cite{BBS2022multi}, 
where the support of the seeked barycenter is fixed a-priori.
In the following, we want to give some insights 
on this embeddding-issue in relation to our proposed method
and give some special cases where (high-dimensional) embeddings 
of tangent barycenters can be obtained naturally.
To this end, 
consider the inputs
$\XX_i = (X_i,g_i,\xi_i)$, $i =1,\dotsc,N$,
and let $(X_\times,m_\rho,\mu)$
with $\mu \in \Pi(\XX_1,\dotsc,\XX_N)$
be (an approximation of) the barycenter.
\paragraph{Euclidean Space}
First, we consider the Euclidean case, 
i.e.\ let $X_i = \R^{d_i}$, 
$d_i \in \NN$, $i =1,\dotsc,N$,
and set 
$d_{\oplus} \coloneqq \sum_{i=1}^N d_i$.
\begin{itemize}
    \item[i)]
    Let $g_i \coloneqq \|\cdot - \cdot \|_{d_i,1}$
    be the $1$-norm on $\R^{d_i}$, $i=1,\dotsc,N$.
    Then
    \begin{align*}
    m_\rho(x_\times,x_\times') 
    &= \sum_{i=1}^N \rho_i g_i(x_i,x_i')
    = \sum_{i=1}^N \rho_i\|x_i - x_i'\|_{d_i,1}
    = \sum_{i=1}^N \|\rho_i x_i - \rho_i x_i' \|_{d_i,1}
    \\
    &= \| S_\rho(x_\times) - S_\rho(x_\times') \|_{d_\oplus,1},
    \end{align*}
    where $S_\rho(x_\times) \coloneqq (\rho_1 x_1,\dotsc,\rho_N x_N)$, 
    $x_\times \in X_\times$.
    Thus,
    \[
    (X_\times, m_\rho, \mu) 
    \simeq (\R^{d_\oplus}, \|\cdot - \cdot \|_{d_\oplus,1}, (S_{\rho})_\# \mu).
    \]
    \item[ii)]
    Either let $g_i = \|\cdot - \cdot\|_{d_i,2}^2$ 
    be the squared standard Euclidean norm on $\R^{d_i}$ 
    or $g_i = \langle \cdot, \cdot \rangle_{d_i}$ 
    be the standard scalar product on $\R^{d_i}$.
    Then we obtain
    \begin{align*}
    m_\rho(x_\times,x_\times') 
    &= \sum_{i=1}^N \rho_i g_i(x_i,x_i')
    = \sum_{i=1}^N  g_i(\sqrt{\rho_i} x_i , \sqrt{\rho_i} x_i')
    \\
    &= 
    \begin{cases}
        \|S_{\sqrt{\rho}}(x_\times) - S_{\sqrt{\rho}}(x_\times')\|_{d_\oplus,2}^2
        &\text{if } g_i = \|\cdot - \cdot \|_{d_i,2}^2, \quad i=1\dotsc,N
        \\
        \bigl\langle 
        S_{\sqrt{\rho}}(x_\times),S_{\sqrt{\rho}}(x_\times')
        \bigr\rangle_{d_\oplus}
        &\text{if } g_i = \langle \cdot, \cdot \rangle_{d_i}, \quad i=1\dotsc,N,
    \end{cases}
    \end{align*}
    where $S_{\sqrt{\rho}}(x_\times)
    \coloneqq 
    (\sqrt{\rho_1} x_1,\dotsc,\sqrt{\rho_N} x_N)$, 
    $x_\times \in X_\times$.
    Thus,
    \[
    (X_\times, m_\rho, \mu) 
    \simeq 
    \begin{cases}
        (\R^{d_\oplus}, \|\cdot - \cdot \|_{d_\oplus,2}^2, (S_{\sqrt{\rho}})_\# \mu)
        &\text{if } g_i = \|\cdot - \cdot \|_{d_i,2}^2, \quad i=1,\dotsc,N
        \\
        (\R^{d_\oplus}, \langle \cdot,\cdot\rangle_{d_\oplus}, (S_{\sqrt{\rho}})_\# \mu)
        &\text{if } g_i = \langle \cdot,\cdot\rangle_{d_i}, \quad i=1,\dotsc,N.
    \end{cases}
    \]
\end{itemize}
For the above cases,
our method provides us with embeddings
without applying any distance-preserving embedding technique.
However, it should be noted that these embeddings will naturally be high-dimensional.
To reduce the dimensionality of the embedding, 
we may apply computationally inexpensive
techniques such as the principle component analysis (PCA).
In the special cases, 
where $\mu = (T_1,\dotsc,T_N) _\# \xi_1$
for isometries (or more generally linear functions) 
$T_i:X_1 \to X_i$, $i=1,\dotsc,N$,
the gm-space $(X_\times,m_\rho,\mu)$ 
is isomorphic to the (rescaled) graph of the linear function 
$(T_1,\dotsc,T_N):X_1 \to \R^{d_\oplus}$,
which is already a $d_1$-dimensional subspace of $\R^{d_\oplus}$.
Notably, this gives a further motivation 
for the linear Gromov--Monge problem \cite[Def.~4.2.4]{Vayer2020},
which naturally restricts the support of plans to lie
on the graph of linear functions.

\paragraph{Surfaces}
In the practical context,
shapes are usually 2d surfaces that are 
embedded in the 3d Euclidean space.
In mathematical terms this leads us to Riemannian manifolds.
Let $M \subset \R^{3}$ be a smooth connected 2d manifold.
The \emph{Riemannian distance} on $M$ is defined by 
\[
d_M(x,y) 
\coloneqq 
\inf \Bigl\{
\int_{a}^b \| \gamma'(t) \|_{3,2} \dx t
: \gamma \in C^1([a,b],M), \gamma(a) = x, \gamma(b) = y
\Bigr\}.
\]
Fortunately, Riemannian manifolds are isometric in the Riemannian sense
if and only they are isometric in the metric sense with respect to their
Riemannian distance \cite{lee2018introduction}.
Let $M_i \subset \R^3$ be 2d Riemannian manifolds
with Riemannian distances $d_{M_i}$ 
and probability measures $\xi_i \in \p(M_i)$,
$i=1,\dotsc,N$.
Consider the gm-spaces 
$\XX_i \coloneqq (M_i,d_{M_i}^2,\xi_i)$.
Using the fact that
\smash{$\rho_i d_{M_i}^2(x_i,x_i') 
= d_{\sqrt{\rho_i} M_i}^2(\sqrt{\rho_i} x_i, \sqrt{\rho_i}x_i')$}
and following similar arguments as in ii) of the previous paragraph, 
we can show that
\[
(M_\times, m_\rho, \mu)
\simeq 
\bigl(S_{\sqrt{\rho}}(M_\times), 
\tilde{d}_\rho, 
(S_{\sqrt{\rho}})_\# \mu\bigr),
\]
where 
\smash{$M_\times \coloneqq \bigtimes_{i=1}^N M_i$},
\smash{$m_\rho \coloneqq \sum_{i=1}^N \rho_i d_{M_i}^2$},
and 
\smash{$\tilde{d}_\rho \coloneqq \sum_{i=1}^N d_{\sqrt{\rho_i} M_i}^2$}.
The Cartesian product
\smash{$S_{\sqrt{\rho}}(M_\times)$}
is a ($2N$)-dimensional Riemannian manifold embedded in $\R^{3N}$. 
Furthermore, 
\smash{$\tilde{d}_\rho$}
is the squared Riemannian distance 
on $S_{\sqrt{\rho}}(M_\times)$.
Indeed, 
let $\gamma 
= (\gamma_1,\dotsc,\gamma_N)$
be an element of
$C^1([a,b],S_{\sqrt{\rho}}(M_\times))$, 
then
\begin{equation*}
    \int_a^b \| \gamma'(t) \|_{3N,2}^2 \dx t  
    =
    \int_a^b \sum_{i=1}^N \| \gamma_i'(t) \|_{3,2}^2 \dx t
    = \sum_{i=1}^N \int_a^b \| \gamma_i'(t) \|_{3,2}^2 \dx t.
\end{equation*}
Therefore, 
\smash{$\gamma \in C^1([a,b],S_{\sqrt{\rho}}(M_\times))$}
realizes \smash{$d_{S_{\sqrt{\rho}}(M_\times)}$}
if and only if all of its components $\gamma_i$ 
realize $d_{\sqrt{\rho_i} M_i}$, $i=1,\dotsc,N$
so that \smash{$\tilde{d}_\rho = d_{S_{\sqrt{\rho}}(M_\times)}$}. 
Therefore, we may interpret the gm-space $(M_\times, m_\rho, \mu)$
as a $(2N)$-dimensional Riemannian (product) manifold in $\R^{3N}$.
If $\mu = (T_1,\dotsc,T_N)_\# \xi_1$ 
for Riemmanian isometries $T_i :M_1 \to M_i$, $i=1,\dotsc,N$,
we know that $\supp(\mu) \subset S_{\sqrt{\rho}}(M_\times)$
admits a 2d isometric representative (e.g.\ $M_1$), 
however, in contrast to the Euclidean case, 
it is unclear if $\supp(\mu)$ 
naturally lies on a lower-dimensional subspace of $\R^{3N}$.

\subsection{Approximation of Pairwise Distances}
\label{subsec:tangent_distance_approximation}

In \cite{beier2022linear}, 
the authors consider an
approximation of pairwise GW distances 
of a set of spaces 
$\Xf_i 
= \llbracket \XX_i \rrbracket
= \llbracket (X_i,g_i,\xi_i) \rrbracket$, 
$i =1,\dotsc,N$,
by fixing a reference point 
$\Yf 
= \llbracket \YY \rrbracket 
= \llbracket (Y,h,\upsilon) \rrbracket$,
and proposing the
\emph{linear Gromov--Wasserstein distance}
\[
\LGW_\Yf(\Xf_i,\Xf_j) 
\coloneqq
\inf_{
\substack{
\ff_i \in \Log_{\Yf}(\Xf_i)
\\
\ff_j \in \Log_{\Yf}(\Xf_j)
}}
d_{\T_\Yf}(\ff_i,\ff_j)
= 
\hspace{-10pt}
\inf_{
\substack{
\mu_{i,j} \in \Melt_{\YY}(\pi_i,\pi_j)
\\
\pi_i \in \Pio(\upsilon, \xi_i), \,
\pi_j \in \Pio(\upsilon, \xi_j)
}
}
\hspace{-10pt}
\Fgw^{\XX_i,\XX_j}(\mu_{i,j}),
\]
where the second equality has been shown in the reference.
The proposed distance follows the same idea
as the so-called linear Wasserstein distance
\cite{wang2013linear}:
instead of directly computing the GW distance,
the input spaces $\Xf_i$, $i=1,\dotsc,N$
are lifted into the tangent space of the a-priori
fixed reference space $\Yf$ via the logarithmic map,
where subsequent distance computations are carried out.
Because the logarithmic map is a set-valued function
and thus may yield multiple tangents, 
and because the evaluated tangent distance
$d_{\T_\Yf}(\ff_i,\ff_j)$
is an
upper bound to $\GW(\XX_i,\XX_j)$,
the quantity is minimized 
with respect to all tangents in
$\Log_{\Yf}(\Xf_i)$ and $\Log_{\Yf}(\Xf_j)$.
Effectively, 
this results in minimizing with respect to
all possible optimal $\GW$ plans 
from $\YY$ to $\XX_i$ and $\XX_j$.
In practice, 
this minimization is intractable
and is usually replaced by fixing
one computed plan $\hat{\pi}_i$ per input $\XX_i$,
$i =1,\dotsc,N$.
After each iteration of \cref{alg:1},
we obtain an approximation of $\LGW_\Yf(\Xf_i,\Xf_j)$
by evaluating 
\smash{$\Fgw^{\XX_i,\XX_j}((P_{X_i \times X_j})_\# \mu)$},
$i,j = 1,\dotsc,N$,
where 
$\mu \in \Melt_{\YY}(\hat{\pi}_1,\dotsc,\hat{\pi}_N)$
is the measure obtained 
from the gluing-melting step.
We explore this approximation 
in our numerical examples
in further detail.

\section{Numerical Experiments}\label{sec:8}

We employ \cref{alg:1}
together with our remarks of the previous section
to interpolate and classify
3d shapes
as well as for multi-graph matching 
on a protein network dataset.
All experiments are run on an off-the-shelf
MacBook Pro (Apple M1 chip, 8~GB RAM) 
and are implemented%
\footnote{The source code 
is publicly available at 
\url{https://github.com/robertbeinert/tangential-GW-barycenter}.} 
in Python 3. 
We partly rely on the 
Python Optimal Transport (POT) toolbox 
\cite{POT-toolbox},
networkX \cite{networkx}
for handling graphs,
Trimesh \cite{trimesh},
as well as some publicized 
implementations of \cite{xu2019scalable}.

\subsection{3D Shape Interpolations}\label{subsec:interpolation}

In the following, we use \cref{alg:1} 
paired with the considerations from \cref{sec:in_practice}
to generate qualitative GW interpolations 
between multiple 3d surface meshes.
We focus on two datasets, 
namely the mesh deformation dataset \cite{mesh3d_animals}, which predominantly comprises 
animals in various poses.
Furthermore, we consider the training subset of
\cite{faust}, which consists of 3d scans of humans
in various poses.
The meshes of the latter consist of 6890 nodes.
Since some of the meshes 
from the mesh deformation dataset 
comprise up to 42k nodes, 
we use a quadric mesh simplification 
provided by the Trimesh package to reduce
them to support sizes between 5000 and 6000.

From a surface mesh with nodes $V \subset \R^3$ 
and (triangular) faces $F \subset V^3$, 
we first extract a weighted graph $G= (V,E)$,
where the edge set $E$ is obtained by incorporating
edges between any nodes which share a common face.
The weight of the edge is given by 
the Euclidean distance between its respective nodes.
From the graph $(V,E)$,
we proceed to extract a gm-space $\XX = (X,g,\xi)$ 
as follows:
The node set serves as the support of $\XX$, i.e.\ $X = V$, 
the gauge $g$ is is taken 
as the weighted Djikstra distances 
between the individual nodes
and $\xi$ is the uniform distribution.

We utilize \cref{alg:1},
to compute approximate barycenters between various choices
of inputs $\XX_i = (X_i,g_i,\xi_i)$, $i=1,\dotsc,N$, 
for either $N=2$ or $N=4$.
In all cases that follow, we set the initial guess 
to be one of the inputs, i.e.\ we initialize
the algorithm with $\YY = \XX_1$.
As previously discussed,
this reduces the computational complexity slightly.
Moreover, 
to obtain the entire interpolation for $N=2$ inputs, 
we only require one iteration of \cref{alg:1}.
More precisely, 
in this case,
we merely compute an 
approximate optimal plan $\pi \in \Pi(\xi_1,\xi_2)$
via block-coordinate descent of the bi-convex relaxation. 
The barycenters then admit the form
\begin{equation}\label{eq:bary_geodesic_2}
\YY_\rho = (X_1 \times X_2, \rho_1 g_1 + \rho_2 g_2, \pi).
\end{equation}
where $\rho_1 + \rho_2 = 1$.
For $N = 4$,
we restrict ourselves to
3 iterations of \cref{alg:1}
which is indicated to be sufficient 
as we observed fast convergence
in terms of the barycentric loss
$\Fgwb^{\XX_1,\dotsc,\XX_N}$.
As with $N=2$, 
for the GW step we apply 
block-coordinate descent.
For the gluing-melting step,
we use the north-west-corner-rule,
see \cref{sec:in_practice}.
For all but the first iteration,
we initialize 
the block-coordinate descent 
in the GW step between 
$\YY = (X_\times,m_\rho,\mu)$ and $\XX_i$
with the plan 
$(P_{X_\times},P_{X_i})_\# \mu 
\in \Pi(\YY,\XX_i)$.
Since the algorithm only requires the barycentric coordinates $\rho$
for the mean gauge $m_\rho$ 
in the final step
and not in the main iteration
to calculate the multi-marginal plan,
we obtain approximate barycenters 
simultaneously for all $\rho \in \Delta_3$.
The corresponding outputs
admit the form
\begin{equation}\label{eq:bary_geodesic_4}
\YY_\rho = \Bigl(\bigtimes\nolimits_{i=1}^4 X_i, \sum\nolimits_{i=1}^4 \rho_i g_i, \mu\Bigr).
\end{equation}
We remark that for these experiments,
all obtained approximate GW transport plans
were supported on the graph of a transport map, 
so that
$\lvert \supp(\pi) \rvert 
= \lvert \supp(\mu) \rvert 
= 6890$.
To construct appropriate surface meshes,
we fix an index $i_0 \in \{1,\dotsc,N\}$
and incorporate a face between any three points 
$(x_1,\dotsc,x_N), 
(y_1,\dotsc,y_N), 
(z_1,\dotsc,z_N)$
in the support of the (approximate) barycenter
whenever $(x_{i_0},y_{i_0},z_{i_0})$ is a face
in the $i_0$-th input.
To plot the obtained meshes in 3d,
we adopt the idea of
\cref{subsec:embedding} 
for the Euclidean case.
For this,
we replace the geodesic distances $g_i$ in 
\cref{eq:bary_geodesic_2} 
and \cref{eq:bary_geodesic_4}
with the squared Euclidean norm,
i.e.\ we consider gm-spaces of the form
\[
(\R^{3N}, 
\sum_{i=1}^N \rho_i \|\cdot - \cdot\|_{3,2}^2,
\alpha) 
\simeq (\R^{3N}, \|\cdot - \cdot\|_{3N,2}^2, (S_{\sqrt{\rho}})_\#\alpha),
\quad
\alpha \coloneqq
\begin{cases}
    \pi & \text{ for } N = 2,\\
    \mu & \text{ for } N = 4
\end{cases},
\]
with 
$S_{\sqrt{\rho}}(x_\times) 
= (\sqrt{\rho_1} x_1,\dotsc,\sqrt{\rho_N} x_N)$.
The support of $(S_{\sqrt{\rho}})_\#\alpha$
together with the previously constructed faces
creates a $3N$-dimensional surface mesh.
Using a PCA,
we compute an associated 3d affine subspace 
with the smallest projection error.
Finally,
we obtain 3d meshes by projecting
onto the found subspace.
Note that the replacement of the gauge
together with the dimension reduction
is merely a heuristic for plotting
purposes.

\begin{figure}
    \centering
    \includegraphics[
    width=0.9\linewidth]
    {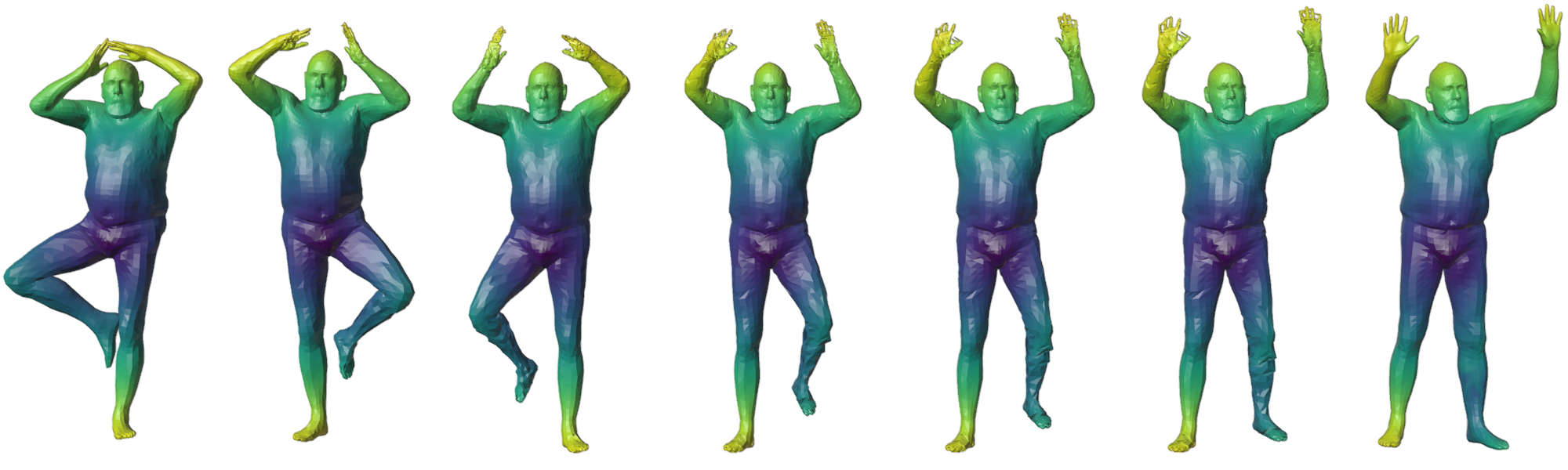}
    \\
    \setlength{\labwidth}{50pt}
    \parbox{\labwidth}{\centering 
    \small{\hspace{-10pt}diam: \, 1.82 \\ \hspace{-10pt}PCA: \, ---}
    }
    \parbox{\labwidth}{\centering 
    \small{1.86 \\ 0.15}
    }
    \parbox{\labwidth}{\centering 
    \small{1.90 \\ 0.20}
    }
    \parbox{\labwidth}{\centering 
    \small{1.94 \\ 0.20}
    }
    \parbox{\labwidth}{\centering 
    \small{1.98 \\ 0.17}
    }
    \parbox{\labwidth}{\centering 
    \small{2.02 \\ 0.13} 
    }
    \parbox{\labwidth}{\centering 
    \small{2.06 \\ ---}
    }
    \\
    \includegraphics[
    width=0.9\linewidth]{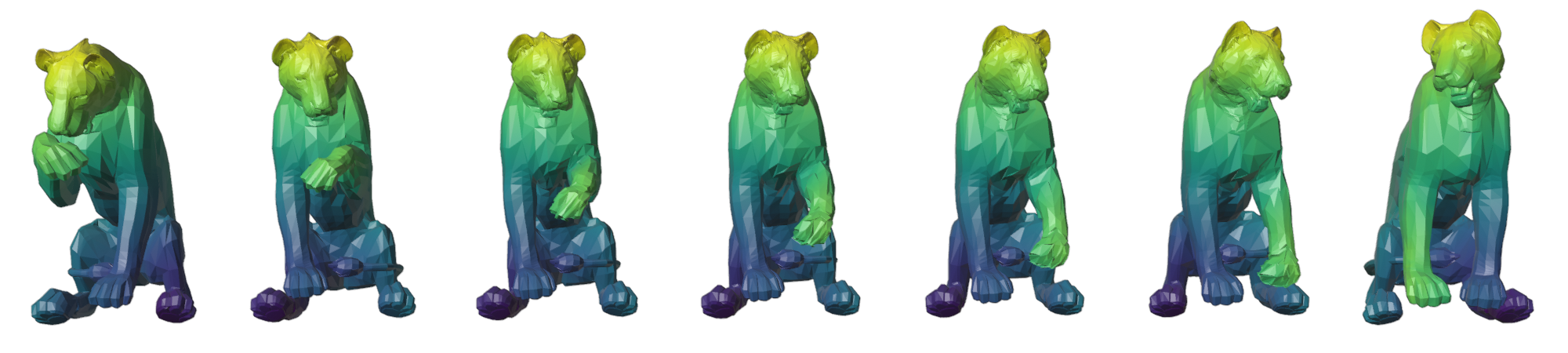}
    \\
    \setlength{\labwidth}{50pt}
    \parbox{\labwidth}{\centering 
    \small{\hspace{-10pt}diam: \, 0.61 \\ \hspace{-10pt}PCA: \, ---}
    }
    \parbox{\labwidth}{\centering 
    \small{0.61 \\ 0.02}
    }
    \parbox{\labwidth}{\centering 
    \small{0.61 \\ 0.03}
    }
    \parbox{\labwidth}{\centering 
    \small{0.62 \\ 0.04}
    }
    \parbox{\labwidth}{\centering 
    \small{0.62 \\ 0.04}
    }
    \parbox{\labwidth}{\centering 
    \small{0.62 \\ 0.03} 
    }
    \parbox{\labwidth}{\centering 
    \small{0.63 \\ ---}
    }
    \\[10pt]
    \includegraphics[
    width=0.9\linewidth]
    {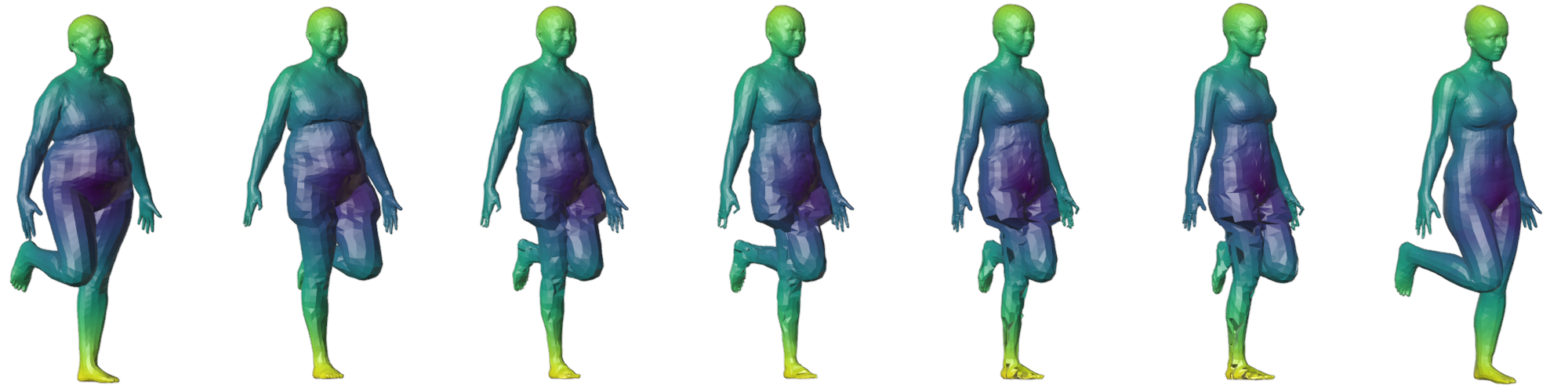}
    \\
    \setlength{\labwidth}{50pt}
    \parbox{\labwidth}{\centering 
    \small{\hspace{-10pt}diam: \, 1.62 \\ \hspace{-10pt}PCA: \, ---}
    }
    \parbox{\labwidth}{\centering 
    \small{1.62 \\ 0.06}
    }
    \parbox{\labwidth}{\centering 
    \small{1.63 \\ 0.07}
    }
    \parbox{\labwidth}{\centering 
    \small{1.63 \\ 0.08}
    }
    \parbox{\labwidth}{\centering 
    \small{1.64 \\ 0.07}
    }
    \parbox{\labwidth}{\centering 
    \small{1.64 \\ 0.06} 
    }
    \parbox{\labwidth}{\centering 
    \small{1.65 \\ ---}
    }
    \\
    \includegraphics[
    width=0.9\linewidth]{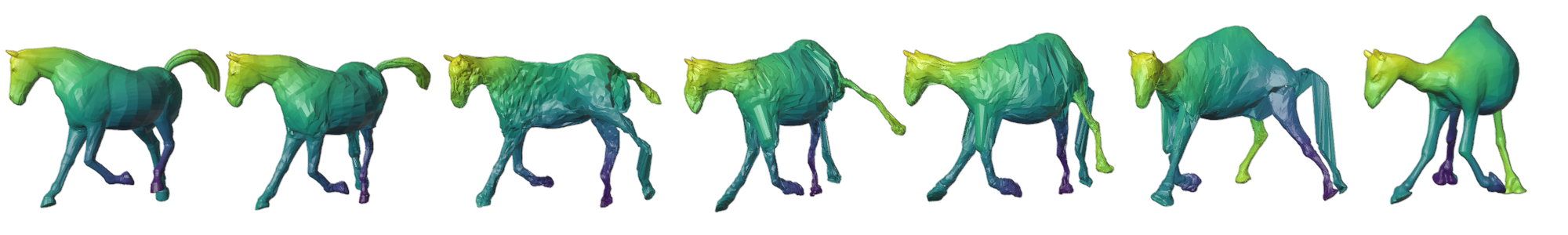}
    \\
    \setlength{\labwidth}{50pt}
    \parbox{\labwidth}{\centering 
    \small{\hspace{-10pt}diam: \, 1.10 \\ \hspace{-10pt}PCA: \, ---}
    }
    \parbox{\labwidth}{\centering 
    \small{1.07 \\ 0.11}
    }
    \parbox{\labwidth}{\centering 
    \small{1.09 \\ 0.15}
    }
    \parbox{\labwidth}{\centering 
    \small{1.11 \\ 0.18}
    }
    \parbox{\labwidth}{\centering 
    \small{1.13 \\ 0.18}
    }
    \parbox{\labwidth}{\centering 
    \small{1.15 \\ 0.14} 
    }
    \parbox{\labwidth}{\centering 
    \small{1.17 \\ ---}
    }
    \\[10pt]
    \caption{Gromov--Wasserstein
    interpolations $(\YY_\rho)_{\rho \in \Delta_{1}}$ between two 3d shapes for a total of four input pairs $(\XX_1,\XX_2)$.
    From left to right, each row shows
    $\XX_1,
    \YY_{(\nicefrac{5}{6},\nicefrac{1}{6})},
    \YY_{(\nicefrac{4}{6},\nicefrac{2}{6})},
    \YY_{(\nicefrac{3}{6},\nicefrac{3}{6})},
    \YY_{(\nicefrac{2}{6},\nicefrac{4}{6})},
    \YY_{(\nicefrac{1}{6},\nicefrac{5}{6})},
    \XX_2$.
    The colouring indicates the optimal GW plan
    between the inputs and interpolants.
    The Euclidean diameters 
    (before applying the PCA)
    and PCA residuals are shown below each input/barycenter.
    The GW distances are (top to bottom):
    0.07,\, 0.03,\, 0.06,\, 0.15.}
    \label{fig:2-interpolations}
\end{figure}

For $N=2$, \cref{fig:2-interpolations} 
shows the obtained interpolations 
for $(\rho_1,\rho_2) 
= (\nicefrac{5}{6},\nicefrac{1}{6}),
\dotsc,
(\nicefrac{1}{6},\nicefrac{5}{6})$
between various choices of meshes from both mentioned datasets.
The colouring illustrates the transport between the inputs
as well as between input and barycenter.
We remark that the obtained interpolants do not 
follow any particular fixed alignment
but have been rotated manually to align with the inputs
for the purpose of comparison.
In relation to the Euclidean diameters,
the mean PCA errors, 
i.e.\ the mean distance of points before and after the PCA,
are reasonably small.
The results indicate that
this feature is especially pronounced,
if the GW distance
between the associated inputs
is small as well.
Judging from a qualitative viewpoint, 
up to some small potential error,
the obtained transport plans associated 
to $\XX_1,\XX_2$ of rows one and two 
are most likely global minimizers of $\Fgw^{\XX_1,\XX_2}$.
The optimality of the plan results in a 
very good shape interpolation between the inputs.
In row three we see the results in the case where the
obtained transport plan is 
most likely
only a local minimum.
As can be seen from the colours, 
the plan matches the left feet and right feet
of the subjects correctly, but matches the left hand of
the first subject with the right hand of the second 
and vice versa, which is evidently not a global minimum.
This non-optimal matching is well reflected 
by the interpolation in which
the legs seem to change places.
Finally, the last row shows the results 
for a horse and a camel.
It should be noted that the camel has, 
in contrast to the horse,
a very short tail.
Geodesically speaking, 
the inputs are thus very different.
Indeed, as the colouring shows,
the obtained transport plan matches the horse's tail
to one of the camels legs 
and partially merges two of the horse's legs to one.
In the interpolation,
the tail moves down the body 
to eventually become the leg of the camel.

The interpolation between $N=4$ inputs 
are shown in \cref{fig:4-interpolations}.
Different to the FAUST interpolations above,
we picked four distinct subjects
with distinct poses.
Therefore, the challenge here
is not only the interpolation between
different body shapes
but also between the poses.
Judging from a qualitative viewpoint,
our proposed method seems to overcome this
suitably well.
We want to emphasize again,
that the entire interpolation
is obtained by 3 iterations of \cref{alg:1}.
The quality of it suggests that
the obtained melting $\mu$
seems to be at least near-optimal 
for $\MGW_\rho(\XX_1,\dotsc,\XX_4)$
independently of $\rho \in \Delta_3$.
Overall,
the surfaces of the interpolants
seem to be less smooth
than those of the inputs.
Here, 
the mean over all 
mean pca errors
of all 21 barycenters 
is 0.11 
and the mean Euclidean diameter
of the 4 inputs is 1.92.
which indicates that the PCA
achieves a reasonable approximation.
\begin{figure}
    \centering
    \includegraphics[width=220pt]{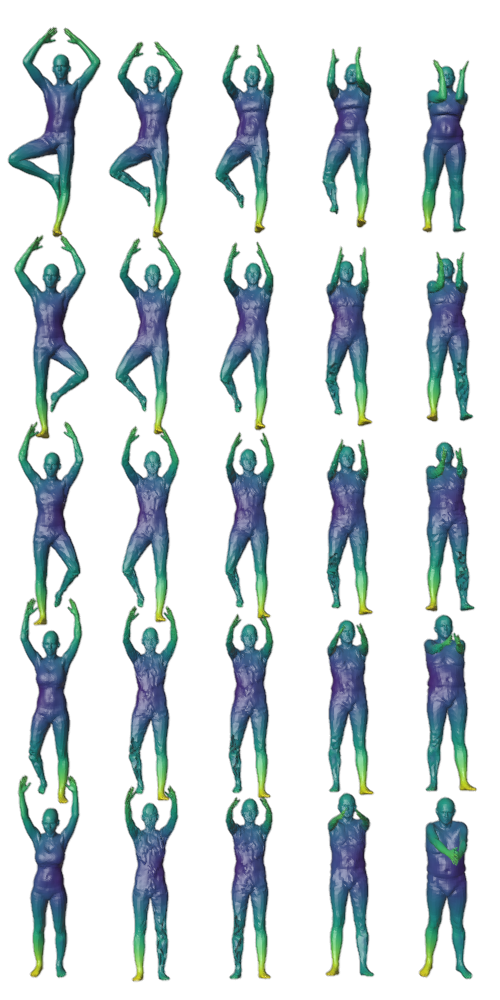}
    \caption{A GW interpolation 
    $(\YY_{\rho})_{\rho \in \Delta_3}$
    between four input shapes $\XX_1,\XX_2,\XX_3,\XX_4$ shown in the corners.}
    \label{fig:4-interpolations}
\end{figure}

\subsection{Approximation of Pairwise Distances}\label{subsec:lgw}

According to \cref{subsec:tangent_distance_approximation}, 
\cref{alg:1} provides us with an approximation of
the pairwise linear GW distances of the inputs
at a reference point.
In this example, we show how these distances
can be leveraged to classify shapes.
As above, we focus on the training set of the FAUST dataset
and the meshes from the deformation dataset.
The former consists of 10 human subjects in 10 poses each,
totaling to 100 meshes,
whereas the latter consists in particular of the classes
camel, cat, elephant, face, head, horse, lion,
each of which is given in 10 or 11 poses.
We reduce all meshes down to 500 nodes using the same
quadric mesh decimation and extract the gm-spaces 
as in \cref{subsec:interpolation}.
We expect that identifying the classes from the deformation dataset with GW poses an easier task 
than identifying the subjects from the FAUST training set.
This is due to the fact that the geometry 
between meshes from distinct classes 
of the deformation dataset varies much more 
than the geometry between distinct human subjects.

In the case of the deformation dataset, 
we extract $N = 73$ gm-spaces 
$\XX_1,\dotsc,\XX_N$.
All gm-spaces are supported on $500$ points
and are endowed with their pairwise Djikstra distances.
As the meshes of distinct classes vary greatly in diameter,
we rescale the gm-spaces so that the 
maximal Djikstra distance of each is one.
The classifcation task would be too simple otherwise.
We proceed to compute all pairwise GW distances 
of the entire set 
with block-coordinate descent
which takes approximately 43 minutes
on our machine.
As previously mentioned, 
the conditional gradient might
only recover local minima.
We use the following method to mitigate this problem.
Since $\GW$ is a metric, 
we check if the triangle inequality holds for all inputs. 
Let $\pi_{i,j}$ be the obtained transport plan between
$\XX_i$ and $\XX_j$ for all $i,j \in \{1,\dotsc,N\}$
and set \smash{$\GW_{i,j} = \Fgw^{\XX_i,\XX_j}(\pi_{i,j})$}.
Whenever there exists $k \in \{1,\dotsc,N\}$ 
so that
$
\GW_{i,k} + \GW_{k,j} < \GW_{i,j}
$
we know that $\pi_{i,j} \not\in \Pio(\XX_i,\XX_j)$.
In this case, 
we restart the block-coordinate descent
for inputs $\XX_i$, $\XX_j$ with initial value
$(P_{X_i \times X_j})_\# \gamma$, 
where $\gamma \in \Gamma_{\XX_k}(\pi_{i,k},\pi_{k,j})$
is the north-west-corner gluing 
between $\pi_{i,k}$ and $\pi_{k,j}$ along $\XX_k$.
The obtained pairwise distances are shown on the left-hand side of \cref{fig:gw_and_conf_DF}. 
As can be seen, 
the matrix of the pairwise distances 
readily identifies the classes.
We verify this
by a simple nearest-neighbour procedure.
Here we run $10\,000$ iterations,
in each of which we pick a random representative 
of each class.
Afterwards, all shapes are classified 
using nearest neighbour classification with respect
to the selected representatives.
The confusion matrix is then estimated by considering 
the number of times a shape from class $n$ 
has been classified as class $m$, 
for $n,m \in 
\{\text{camel}, \text{cat}, \text{elephant}, 
\text{face}, \text{head}, \text{horse}, \text{lion}\}$.
The result is then normalized 
by the number of iterations 
and by the class-size.
The resulting matrix is shown on
the right-hand side of \cref{fig:gw_and_conf_DF}.

\begin{figure}[t]
    \centering
    \includegraphics[width = 100pt]{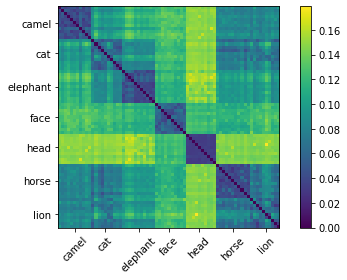}
    \includegraphics[width = 100pt]{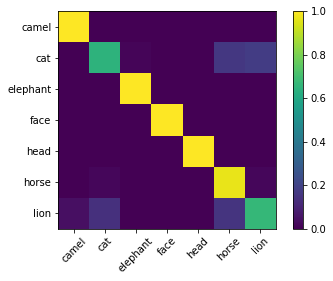}
    \caption{Pairwise GW distances (left) 
    and confusion matrix (right)
    of the deformation dataset.}
    \label{fig:gw_and_conf_DF}
\end{figure}

To quantify how well the linear approximation
compares against the obtained $\GW$ distances,
we run a 10-fold cross validation paired
with a support vector machine 
to evaluate the potency for classification.
More precisely, 
we split the set of $N$ shapes into 10 subsets
and perform 10 iterations in each of which 
9 subsets are considered as a training set,
whereas the remaining one is taken as a test set.
In each iteration of the cross validation 
we do the following.
Firstly, we run $5$ iterations of $\cref{alg:1}$,
initialized by setting $\YY$ 
as a random element of the training set.
In the gluing-melting step, 
we use the north-west corner rule.
For $k=1,\dotsc,5$, let $\mu_k$
be the obtained melting after the $k$-th iteration.
As elaborated in \cref{subsec:tangent_distance_approximation},
an approximation of the linear GW distance
between $\XX_i,\XX_j$ at
the reference $\YY^{(k-1)}$ is then obtained by setting
\[
\LGW^{(k)}_{i,j}
\coloneqq 
\Fgw^{\XX_i,\XX_j}( (P_{X_i \times X_j})_\# \mu_k).
\]
In the following, for each $k =1,\dotsc,5$,
we compare $\LGW^{(k)}$ with $\GW$, 
within a larger machine-learning pipeline.
To this end, we train two support vector machines 
with the kernels 
\smash{$\exp(-10\GW_{i,j})_{i,j}$} 
and \smash{$\exp(-10\LGW^{(k)}_{i,j})_{i,j}$} 
on the training set and evaluate on the test set.
Additionally, 
we compute the mean relative error (MRE)
and the Pearson correlation coefficient (PCC),
of $(\GW_{i,j})_{i,j}$ with respect to \smash{$(\LGW^{(k)}_{i,j})_{i,j}$}, $k =1,\dotsc,5$, where the mean is taken over all iterations of the cross validation.
We remark that for the MRE, 
the $0$ values of the matrices are not considered.
The results are shown in \cref{tab:LGW_DF}.
\begin{table}
\caption{
Results of the 10-fold cross validation for the deformation dataset.
The MRE and PCC are the means for the respective
iterations over all iterations.
}
\label{tab:LGW_DF}
\centering
\begin{tabular}{lcccc}
    \toprule
     & Time (min) & Acc (\%) & MRE & PCC
    \\ 
    \midrule
    $\LGW^{(1)}$      
    & 1 & $83 \pm 15$ & $0.37 \pm 0.14$ & $0.73 \pm 0.16$
    \\
    $\LGW^{(2)}$      
    & 2 & $90 \pm 13$ & $0.25 \pm 0.08$ & $0.87 \pm 0.10$
    \\
    $\LGW^{(3)}$      
    & 3 & $95 \pm \,\,\,\,7$ & $0.18 \pm 0.03$ & $0.94 \pm 0.01$
    \\
    $\LGW^{(4)}$      
    & 4 & $96 \pm \,\,\,\,6$ & $0.17 \pm 0.02$ & $0.95 \pm 0.02$
    \\
    \midrule
    $\LGW^{(5)}$      
    & 5 & $97 \pm \,\,\,\,6$ & $0.15 \pm 0.01$ & $0.96 \pm 0.01$
    \\
    $\GW$      
    & 43 & $97 \pm \,\,\,\,6$ & --- & ---
    \\
    \bottomrule
\end{tabular}
\end{table}
As we expected, $\GW$ is well suited 
for classifying the given shapes.
The cross-validation shows that $\GW$ achieves
an almost perfect classification.
The linear approximation
$\LGW^{(k)}$
achieves monotonous improvement in terms of
accuracy, MRE and PCC. 
Furthermore, 
$\LGW^{(5)}$ achieves the same accuracy 
as the $\GW$ in terms of the cross validation process.

We proceed analogously for $N=100$ training meshes
of the FAUST dataset.
In this case, we have exactly 10 classes 
with 10 meshes per class.
The pairwise distances together with the
estimated confusion matrix are shown in \cref{fig:gw_and_conf_DF}.
\begin{figure}[t]
    \centering
        \includegraphics[height=80pt]{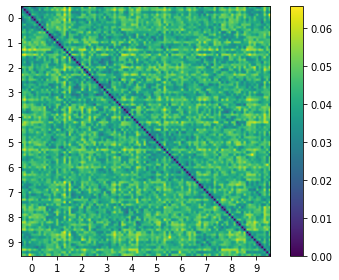}
        \includegraphics[height=80pt]{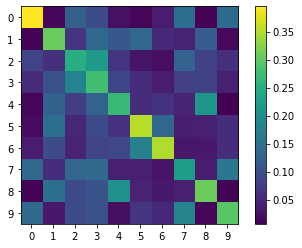}
    \caption{Pairwise GW distances (left) 
    and confusion matrix (right)
    of the Faust dataset.}
    \label{fig:gw_and_conf_F}
\end{figure}
The figure shows that, in terms of the GW distance, 
the differences between the classes are less pronounced.
The results of the analogous 10-fold cross validation
are presented in \cref{tab:LGW_F}.
\begin{table}
\caption{
Results of the 10-fold cross validation for the FAUST dataset.
The MRE and PCC are the means for the respective
iterations over all iterations.
}
\label{tab:LGW_F}
\centering
\begin{tabular}{lcccc}
    \toprule
     & Time (min) & Acc (\%) & MRE & PCC
    \\ 
    \midrule
    $\LGW^{(1)}$      
    & 1 & $31 \pm 8$ & $0.19 \pm 0.07$ & $0.61 \pm 0.16$
    \\
    $\LGW^{(2)}$      
    & 2 & $41 \pm 14$ & $0.15 \pm 0.01$ & $0.64 \pm 0.10$
    \\
    $\LGW^{(3)}$      
    & 3 & $34 \pm 13$ & $0.15 \pm 0.01$ & $0.66 \pm 0.07$
    \\
    $\LGW^{(4)}$      
    & 4 & $44 \pm 12$ & $0.15 \pm 0.02$ & $0.63 \pm 0.11$
    \\
    \midrule
    $\LGW^{(5)}$      
    & 5 & $39 \pm 16$ & $0.15 \pm 0.01$ & $0.63 \pm 0.10$
    \\
    $\GW$      
    & 52 & $43 \pm 13$ & --- & ---
    \\
    \bottomrule
\end{tabular}
\end{table}
As we expected, 
$\GW$ does not provide a very accurate classification
of the dataset.
In terms of the MRE and PCC, the linear approximation 
seems to converge faster than for the previous dataset.
We postulate that this is due to the fact, 
that the dataset already consists of objects which lie all relatively close with respect to the GW distance.
Thus the initial choice $\YY$ 
(which is one of the input gm-spaces)
already achieves a small barycenter loss.
The accuracy based on the $\LGW$
seems to be less stable through the iterations
but approximates the one based on $\GW$ suitably well.

\subsection{Multi-graph matching of Protein networks}\label{subsec:PPI}

In this example, 
we explore our proposed methods potential 
for the task of multi-graph matching.
We consider a dataset of $N=6$ yeast networks 
which has been curated by Collins et al. \cite{PPI}%
\footnote{The dataset can be downloaded from
\url{https://www3.nd.edu/~cone/MAGNA++/}.}.
The dataset has been used as a benchmark 
in e.g.\ \cite{magna1,magna2}.
In addition,
the dataset has been considered in \cite{xu2019scalable} 
to gauge the effectiveness of 
a $\GW$-based method to 
achieve a multi-graph matching.
Although it seems that the authors utilize
a GW barycenter for their proposed
divide-and-conquer approach,
the publicized source code
reveals that the
reported results are obtained
by a greedy pairwise method.
In the following,
we want to show that our tangential barycenter
may be leveraged to obtain a viable
multi-graph matching.
We enumerate the yeast networks by $i =1,\dotsc,6$.
The first network ($i=1$) comes with
8323 high confidence 
protein-protein interactions (PPI).
In addition to these high confidence PPIs, 
the other five networks ($i=2,\dotsc,6$)
also exhibit $p$\% (directed) low-confidence PPIs
for $p \in \{5,10,15,20,25\}$.
For each network their PPIs 
are stored as a matrix 
$M^{(i)} \in \{0,1\}^{1004 \times 1004}$,
where
\[
M_{k,l}^{(i)} = 
\begin{cases}
0   &\text{protein $k$ interacts with protein $l$.}\\
1   &\text{protein $k$ does not interact with protein $l$} 
\end{cases}.
\]
Although PPIs are naturally symmetric,
for reasons unaware to us,
the stored matrices $M^{(i)}$ are not.
Below we will resort to simple symmetrization
to bypass this circumstance.
Finally, on every network a
probability measure $\xi_i$ on the set of nodes is available, 
$i = 1,\dotsc,6$.
The dataset comes with 
the ground truth one-to-one correspondences
between all networks.

We seek to apply our proposed method 
to achieve a multi-alignment of the given protein networks.
To this end, let 
$X_1 = \dotsc = X_6 = \{1,\dotsc,1004\}$,
$g_i = \frac{1}{2} (M^{(i)} + (M^{(i)})^\tT)$ so that
each network gives rise to a gm-space
$\XX_i \coloneqq (X_i,g_i,\xi_i)$.
We proceed to apply \cref{alg:1} four times on 
$\XX_1,\dotsc,\XX_n$, 
one time for each $n \in \{3,\dotsc,6\}$
with initialization $\YY = \XX_1$
and under the use of a proximal gradient descent
to solve the inner GW computations.
As we are interested in one-to-one matchings 
of the respective inputs
the measures are constructed according to 
the maximum rule without replacement,
see \cref{sec:in_practice}.
In this way, after each iteration of \cref{alg:1},
we obtain a gm-space of the form
$\YY_n \coloneqq (\bigtimes_{i=1}^n X_i, \frac{1}{n} \sum_{i=1}^n  g_i, \mu_n)$,
where $\lvert \supp \mu_n \rvert = 1004$.
We iterate \cref{alg:1} until the barycentric loss 
$\Fgwb^{\XX_1,\dotsc,\XX_n}$ increases
which results in 4,2,2,1 iterations for
$n = 3,4,5,6$, respectively.
The multi-marginal matching between the $n$ inputs
is then encoded in the support points of the obtained
multi-marginal plan $\mu_n$, i.e.
\[
(x_1,\dotsc,x_n) \in \bigtimes_{i=1}^n X_i
\quad \text{ are matched if } \quad
(x_1,\dotsc,x_n) \in \supp(\mu_n).
\]
Inspired by \cite{xu2019scalable}, 
we employ the following two evaluation
measures for node correctness
\begin{align*}
\text{NC@1} &\coloneqq 
\frac
{\lvert 
\{x \in \supp(\mu_n) : 
\text{at least one pair $(x_k,x_l)$ for $k \neq l$ is matched correctly}\} 
\rvert}
{1004}
\\
\text{NC@A} &\coloneqq 
\frac
{\lvert 
\{x \in \supp(\mu_n) : 
\text{ all $x_1,\dotsc,x_n$ are matched correctly}
\} 
\rvert}
{1004}.
\end{align*}
The results as well as a comparison 
with those of \cite{xu2019scalable}
as they are reported in the reference
are provided in \cref{tab:multi-matching}.
It should be noted that
the authors of \cite{xu2019scalable}
are using the non-symmetric
matrices $M^{(i)}$ above
as gauge functions for their methods.
For a fair comparison against our method (GWTB),
we repeat their experiments after replacing
the adjacency matrices with the gauge functions $g_i$
defined above which results are presented under GWL (sym). 
After symmetrization,
the methods S-GWL and GWL
yielded the same results.
As we can see, 
our method performs (slightly) better
than GWL (sym).

\begin{table}
\caption{Comparison of GWTB multi-graph matching with
the one proposed in \cite{xu2019scalable}
for yeast network dataset.}
\centering
\setlength{\tabcolsep}{5pt}
\begin{tabular}{ccccccccc
} 
\toprule
\multirow{2}{*}{Method} &
\multicolumn{2}{c}{3 graphs} &
\multicolumn{2}{c}{4 graphs} &
\multicolumn{2}{c}{5 graphs} &
\multicolumn{2}{c}{6 graphs}\\ \cline{2-9}
&
NC@1 &NC@all &
NC@1 &NC@all &
NC@1 &NC@all &
NC@1 &NC@all \\ \hline
GWL
&63.84 &46.22
&68.73 &39.14
&71.61 &31.57
&76.49 &28.39\\
S-GWL
&60.06 &43.33
&68.53 &38.45
&73.21 &33.27
&76.99 &29.68\\
GWL (sym)
&89.34 &69.42
&94.12 &56.27
&97.01 &49.40
&98.01 &\textbf{45.52}\\
GWTB (ours)
&\textbf{93.32} &\textbf{69.92}
&\textbf{96.81} &\textbf{61.35}
&\textbf{97.21} &\textbf{52.09}
&\textbf{98.90} &\textbf{45.52}\\
\bottomrule
\end{tabular}\label{tab:multi-matching}
\end{table}

\section*{Conclusions}

    In this paper,
    we show existence and characterize 
    tangential GW barycenters,
    which gives rise to a novel method 
    for approximating
    GW barycenters.
    We prove that the latter 
    monotonously decreases 
    the barycenter functional.
    We give numerical evidence 
    that our proposed method
    can be used to obtain entire
    GW interpolations
    between multiple 3d shapes,
    for approximation of pairwise GW distances,
    as well as for multi-graph matching tasks.
    Another line of work,
    namely fused GW,
    aims at finding transport plans
    based on both the pairwise preservation 
    of internal gauges as well as the preservation
    of labels in an additional label space.
    A generalization of our work
    to the fused case would require
    a careful study of the geometric structure
    of the labelled gm-spaces endowed with the fused GW distance which we leave as future work.

\section*{Funding}
This work is supported in part by funds from the German Research Foundation (DFG) within the RTG 2433 DAEDALUS.

\appendix

\section{Proof of \texorpdfstring{\cref{thm:gen-bary}}{Theorem 5.1}}
\label{sec:proof-char-bary}
    The following is a slight generalization
    of the proof of
    \cite[Thm~5.1]{BBS2022multi}.
    First, notice that for 
    for real numbers $a_1,\dotsc,a_N \in \R$,
    it holds
    \begin{equation*}
    \sum_{i,j=1}^N \rho_i \rho_j 
    \lvert a_i - a_j \rvert^2
    = \sum_{i=1}^N \rho_i a_i^2
    - \sum_{i,j=1}^N \rho_i \rho_j a_i a_j
    = \sum_{i=1}^N \rho_i \lvert a_i - \sum_{j=1}^N \rho_j a_j \rvert.
    \end{equation*}
    Thus
    \begin{equation}\label{eq:aa2}
    \min_{b \in \R} \sum_{i=1}^N \rho_i \lvert a_i - b \rvert^2
    = 
    \sum_{i=1}^N \rho_i 
    \lvert a_i -  \sum_{j=1}^N \rho_j a_j \rvert^2
    = 
    \sum_{i,j=1}^N \rho_i \rho_j 
    \lvert a_i - a_j \rvert^2.
    \end{equation}
    Hence, the integrand of 
    $\Fmgw^{\XX_1,\dots,\XX_N} (\pi)$,
    $\pi \in \Pi(\XX_1,\dotsc,\XX_N)$,
    becomes
    \begin{align*}
    \sum_{i,j=1}^N \rho_i\rho_j \; 
    \lvert g_i(x_i,x_i') - g_j(x_j,x'_j) \rvert^2
    &=
    \sum_{i=1}^N \rho_i \Bigl\lvert g_i(x_i,x_i') - \sum_{j=1}^N \rho_j g_j(x_j,x_j') \Bigr\rvert
    \\
    &= 
    \sum_{i=1}^N \rho_i \lvert g_i(x_i,x_i') - m_\rho(x_\times,x_\times') \rvert.
    \end{align*}
    Now denote 
    $\Xf_i 
    = \llbracket \XX_i \rrbracket 
    = \llbracket (X_i,g_i,\xi_i) \rrbracket$
    and let 
    $\Yf 
    = \llbracket \YY \rrbracket
    = \llbracket (Y,h,\upsilon) \rrbracket$
    be arbitrary.
    For $i =1,\dotsc,N$ consider 
    $\pi_i \in \Pio(\XX_i,\YY)$ 
    and let
    $\gamma 
    \in \Gamma_\YY(\pi_1,\dotsc,\pi_N)$.
    Pointwisely applying \cref{eq:aa2}
    with $a_i = g_i(x_i,x_i')$ 
    yields
    \begin{align}\label{eq:Y_lower_bound_MGW}
        \Fgwb^{\XX_1,\dotsc,\XX_N}(\Yf) 
        = &\sum_{i=1}^N \rho_i \GW(\XX_i,\YY)
        = \iint_{(X_\times \times Y)^2}
        \sum_{i=1}^N \rho_i \lvert g_i - h \rvert \dx \gamma \dx \gamma \nonumber
        \\
        \geq 
        &\iint_{(X_\times \times Y)^2}
        \sum_{i,j=1}^N \rho_i \rho_j \lvert g_i - g_j \rvert \dx \gamma \dx \gamma
        \nonumber
        \\
        =
        &\iint_{X_\times^2}
        \sum_{i,j=1}^N \rho_i \rho_j \lvert g_i - g_j \rvert \dx (P_{X_\times})_\# \gamma \dx (P_{X_\times})_\# \gamma
        \geq \MGW_\rho(\XX_1,\dotsc,\XX_N),
    \end{align}
    where the last estimate follows by
    $(P_{X_\times})_\# \gamma 
    \in \Pi(\XX_1,\dotsc,\XX_N)$.
    We show that this lower bound is attained
    for $\hat{\Yf }
    = \llbracket \hat{\YY} \rrbracket 
    = \llbracket(X_\times, m_\rho,\hat{\pi}) \rrbracket$,
    where $\hat{\pi} \in \Pio(\XX_1,\dotsc,\XX_N)$
    is arbitrary.
    Indeed, by defining 
    $\hat{\pi}_i 
    \coloneqq (P_{X_i},P_{X_\times})_\# \hat{\pi}$
    and again using \cref{eq:aa2},
    it holds
    \begin{align*}
        &\MGW_\rho(\XX_1,\dotsc,\XX_N)
        \\
        =
        &\Fmgw^{\XX_1,\dotsc,\XX_N}(\hat{\pi})
        =
        \iint_{X_\times^2}
        \sum_{i,j=1}^N \rho_i \rho_j \lvert g_i - g_j \rvert \dx \hat{\pi} \dx \hat{\pi}
        =
        \iint_{X_\times^2}
        \sum_{i=1}^N \rho_i 
        \lvert g_i - m_\rho \rvert 
        \dx \hat{\pi} \dx \hat{\pi}
        \\
        =
        &\sum_{i=1}^N \rho_i 
        \iint_{(X_i \times X_\times)^2}
        \lvert g_i(x_i,x_i') - m_\rho(y,y') \rvert 
        \dx \hat{\pi}_i(x_i,y) \dx \hat{\pi}_i(x_i',y')
        \geq \sum_{i=1}^N \rho_i \GW(\XX_i,\hat{\YY})
        \\
        &= \Fgwb^{\XX_1,\dotsc,\XX_N}(\hat{\Yf}),
    \end{align*}
    where the last estimate is due to
    $\hat{\pi}_i \in \Pi(\XX_i,\hat{\YY})$.
    By the first part, 
    $\hat{\Yf}$ is a solution to
    $\GWB_\rho(\Xf_1,\dotsc,\Xf_N)$.
    
    Now, let 
    $\tilde{\Yf}
    = \llbracket \tilde{\YY} \rrbracket
    = \llbracket (\tilde{Y},\tilde{h},\tilde{\upsilon}) \rrbracket$
    be any minimizer of 
    $\GWB_\rho(\Xf_1,\dotsc,\Xf_N)$.
    We construct 
    $\hat{\pi} \in \Pio(\XX_1,\dotsc,\XX_N)$
    so that
    $\tilde{\Yf} 
    = \llbracket 
    (X_\times,m_\rho,\hat{\pi}) 
    \rrbracket$.
    Let 
    $\tilde{\gamma} 
    \in \Gamma_{\tilde{\YY}}(\tilde{\pi}_1,\dotsc,\tilde{\pi}_N)$, 
    where 
    $\tilde{\pi}_i \in \Pio(\XX_i,\tilde{\YY})$.
    By repeating the steps of \cref{eq:Y_lower_bound_MGW}
    for $\Yf = \tilde{\Yf}$
    and using that
    $\tilde{\Yf}$ is a solution of
    $\GWB_\rho(\Xf_1,\dotsc,\Xf_N)$,
    we firstly obtain 
    $\tilde{\pi} 
    \coloneqq (P_{X_\times})_\# \tilde{\gamma}
    \in \Pio^\rho(\XX_1,\dotsc,\XX_N)$
    and secondly
    \begin{equation}\label{eq:tildegamma_a_e}\tilde{h}(\cdot_1,\cdot_3) = m_\rho(\cdot_2,\cdot_4)
    \quad (\tilde{\gamma} \otimes \tilde{\gamma})(\cdot_1,\cdot_2,\cdot_3,\cdot_4)\text{ - a.s.},
    \end{equation}
    where we also used \cref{eq:aa2} 
    to receive the latter.
    Set $\hat{\YY} = (X_\times,m_\rho,\hat{\pi})$.
    By construction we have
    $\tilde{\gamma} \in \Pi(\tilde{\YY},\hat{\YY})$.
    Using this together with \cref{eq:tildegamma_a_e}, we obtain
    \[
    \GW(\tilde{\YY},\hat{\YY})^2
    \leq 
    \iint_{(\tilde{Y} \times X_\times)^2}
    \underbrace{\lvert \tilde{h} - m_\rho \rvert^2}_{=0 \text{ a.e.}}
    \dx \tilde{\gamma} \dx \tilde{\gamma}
    = 0,
    \]
    which concludes the proof.

\newpage
\section{Proof of \texorpdfstring{\cref{thm:gaussian}}{Theorem 5.2}}
\label{sec:proof-gaussian}

Let $\rho \in \Delta_{N-1}$ be arbitrary
and set 
$\hat{\pi} \coloneqq (T_1,\dotsc,T_N)_\# \xi_1$.
Note that
$\xi_1
= 
(T_1)_\# \xi_1$.
Therefore, 
without loss of generality,
we set $T_1 = \id_{\R^{d_1}}$.
In the following, 
we rely on
\cite[Prop~4.1]{delon2022gromov}
which states
\[
(P_{X_1 \times X_i})_\# \hat{\pi} 
= (\id_{\R^{d_1}},T_i)_\# \xi_1 \in \Pio(\XX_1,\XX_i).
\]
Consequently,
$\hat{\pi} \in \Pi(\XX_1,\dotsc,\XX_N)$.
Next, we show
$(P_{X_i \times X_j})_\# \pi \in \Pio(\XX_i,\XX_j)$
for all $i,j = 1,\dotsc,N$,
which yields
$\pi \in \Pio^\rho(\XX_1,\dotsc,\XX_N)$
due to
\begin{align*}
\frac{1}{2} \sum_{i,j=1}^N \rho_i \rho_j \GW(\XX_i,\XX_j)
\leq 
&\MGW_\rho(\XX_1,\dotsc,\XX_N)
\leq 
\Fmgw^{\XX_1,\dotsc,\XX_N}(\hat{\pi})
\\[-10pt]
= 
&\frac{1}{2} \sum_{i,j=1}^N \rho_i \rho_j
\Fgw^{\XX_i, \XX_j}((P_{X_i \times X_j})_\# \hat{\pi})
= 
\frac{1}{2} \sum_{i,j=1}^N \rho_i \rho_j \GW(\XX_i,\XX_j).
\end{align*}
Let $i \leq j$ and set
\begin{equation*}
S:\R^{d_i} \to \R^{d_j}, 
\quad
S(x) = P_j B P_i^\tT x, 
\quad B =
\Bigl(
    \tilde{I}_{d_j} \tilde{I}_{d_i}^{(d_j)} 
    D_j^{\nicefrac{1}{2}} 
    \Bigl(D_i^{(d_j)}\Bigr)^{-\nicefrac{1}{2}} 
    \Bigm| 0_{d_j, d_i - d_j}
\Bigr)
\in \R^{d_j \times d_i}.
\end{equation*}
We have
\begin{align*}
B A_i 
=
&\Bigl(
    \tilde{I}_{d_j} \tilde{I}_{d_i}^{(d_j)} 
    D_j^{\nicefrac{1}{2}} 
    \Bigl(D_i^{(d_j)}\Bigr)^{-\nicefrac{1}{2}} 
    \Bigm| 0_{d_j, d_i - d_j}
\Bigr)
\Bigl(
    \tilde{I}_{d_i} 
    D_i^{\nicefrac{1}{2}} 
    \Bigl(D_1^{(d_i)}\Bigr)^{-\nicefrac{1}{2}} 
    \Bigm| 0_{d_i, d_1 - d_i}
\Bigr)
\\
=
&\Bigl(
    \tilde{I}_{d_j} \tilde{I}_{d_i}^{(d_j)} 
    D_j^{\nicefrac{1}{2}} 
    \Bigl(D_i^{(d_j)}\Bigr)^{-\nicefrac{1}{2}} 
    \tilde{I}_{d_i}^{(d_j)}
    \bigl(D_i^{\nicefrac{1}{2}}\bigr)^{(d_j)} 
    \Bigl(\Bigl(D_1^{(d_i)}\Bigr)^{-\nicefrac{1}{2}}\Bigr)^{(d_j)}
    \Bigm| 0_{d_j, d_1 - d_j}
\Bigr).
\end{align*}
As all matrices in the left block are diagonal, 
they commute.
Applying this together with
\smash{$(D_i^{\nicefrac{1}{2}})^{(d_j)} 
= (D_i^{(d_j)})^{\nicefrac{1}{2}}$}
and
\smash{
$((D_1^{(d_i)})^{-\nicefrac{1}{2}})^{(d_j)}
= ((D_1^{(d_j)})^{-\nicefrac{1}{2}})$
},
we obtain
\begin{equation*}
    B A_i
    =
    \Bigl(
    \tilde{I}_{d_j}
    D_j^{\nicefrac{1}{2}} 
    \Bigl(D_1^{(d_j)}\Bigr)^{-\nicefrac{1}{2}}
    \Bigm| 0_{d_j, d_1 - d_j}
\Bigr)
= A_j.
\end{equation*}
Hence,
$S \circ T_i 
= P_j B P_i^\tT  P_i A_i P_1^\tT
= P_j B A_i P_1^\tT
= P_j A_j P_1^\tT
= T_j$.
This shows the first part due to
\[
(P_{X_i \times X_j})_\# \hat{\pi} 
= (T_i,T_j)_\# \xi_1
= (T_i,S T_i)_\# \xi_1
= (\id_{\R^{d_i}},S)_\# \xi_i
\in \Pio(\XX_i,\XX_j),
\]
where the latter inclusion is again provided
by \cite[Prop.~4.1]{delon2022gromov}.

We turn our attention 
to the barycenter statement.
As $\GWB_{\rho}(\XX_1,\dotsc,\XX_N)$ is independent of 
orthogonal transformations of the inputs,
we assume without loss of generality
that $\Sigma_{i} = D_i$ which gives $T_i = A_i$.
Let $d_\oplus \coloneqq \sum_{i=1}^N d_i$.
Combining the first part with \cref{thm:gen-bary} 
yields that the gm-space
\[
\bigl(\R^{d_\oplus}, 
\sum\nolimits_{i=1}^N \rho_i \langle \cdot, \cdot \rangle_{d_i}, 
\hat{\pi}\bigr)
=
\bigl(\R^{d_\oplus}, 
\sum\nolimits_{i=1}^N \rho_i \langle \cdot, \cdot \rangle_{d_i}, 
(A_1,\dotsc,A_N)_\# \xi_1\bigr),
\]
is a solution of 
$\GWB_{\rho}(\XX_1,\dotsc,\XX_N)$
for all $\rho \in \Delta_{N-1}$.  
For $x,x' \in \R^{d_1}$, it holds
\[
\sum_{i=1}^N \rho_i \langle A_i x, A_i x' \rangle_{d_i}
=
\sum_{i=1}^N \rho_i x^\tT A_i^\tT A_i x'
=
x^\tT
\underbrace{
\sum_{i=1}^N \rho_i A_i^\tT A_i
}_{\eqqcolon M \text{ (diagonal)}
} 
x'
= \langle M^{\nicefrac{1}{2}} x, M^{\nicefrac{1}{2}} x' \rangle_{d_1}.
\]
Thus, we obtain 
\begin{align*}
\bigl(\R^{d_\oplus}, 
\sum\nolimits_{i=1}^N \rho_i \langle \cdot, \cdot \rangle_{d_i}, 
\hat{\pi}
\bigr)
&\simeq 
\bigl(\R^{d_1}, 
\sum\nolimits_{i=1}^N \rho_i 
\langle A_i \,\cdot, A_i \,\cdot \rangle_{d_i}, 
\xi_1
\bigr)
\\
&\simeq
\bigl(\R^{d_1}, 
\langle M^{\nicefrac{1}{2}} \cdot, M^{\nicefrac{1}{2}} \cdot \rangle_{d_1}, 
\xi_1
\bigr)
\\
&\simeq
\bigl(\R^{d_1}, 
\langle \cdot, \cdot \rangle_{d_1}, 
(M^{\nicefrac{1}{2}})_\#\xi_1
\bigr).
\end{align*}
Finally, $(M^{\nicefrac{1}{2}})_\# \xi_1$ 
is a centered Gaussian distribution on $\R^{d_1}$, 
whose covariance matrix is given by
\begin{align*}
    M^{\nicefrac{1}{2}} \Sigma_1 (M^{\nicefrac{1}{2}})^\tT
    = M D_1
    = \sum_{i=1}^N \rho_i A_i^\tT A_i D_1
    = 
    \sum_{i=1}^N 
    \rho_i
    \begin{pmatrix}
        D_i (D_1^{(d_i)})^{-1} & 0\\
        0 & 0
    \end{pmatrix}
    D_1
    =
    \sum_{i=1}^N \rho_i 
    \begin{pmatrix}
        D_i & 0\\
        0 & 0
    \end{pmatrix},
\end{align*}
which concludes the proof.

\bibliographystyle{abbrv}
\bibliography{reference}

\end{document}